\newtheorem{theorem}{Theorem}[section]
\newtheorem{corollary}[theorem]{Corollary}
\newtheorem{proposition}[theorem]{Proposition}
\newtheorem{question}{Question}
\theoremstyle{definition}
\newtheorem{definition}[theorem]{Definition}
\newtheorem{remark}[theorem]{Remark}
\newtheorem{example}[theorem]{Example}
\title{Translation Results for Some Star-Selection Games}
\author{Christopher Caruvana}
\address{School of Sciences\\
Indiana University Kokomo\\
2300 S. Washington Street, Kokomo, IN 46902 USA}
\email{chcaru@iu.edu}
\urladdr{https://chcaru.pages.iu.edu/}
\author{Jared Holshouser}
\address{Department of Mathematics\\
Norwich University\\
158 Harmon Drive, Northfield, VT 05663 USA}
\email{JHolshou@norwich.edu}
\urladdr{https://jaredholshouser.github.io/}
\date{\today}
\subjclass[2010]{91A44, 54D20, 54B20}
\keywords{Star-selection principles, Topological selection games, Pixley-Roy hyperspaces, Uniform spaces}
\begin{document}

\maketitle

\begin{abstract}
    We continue to explore the ways in which high-level topological connections arise from connections between fundamental features of the spaces,
    in this case focusing on star-selection principles in Pixley-Roy hyperspaces and uniform spaces.
    First, we find a way to write star-selection principles as ordinary selection principles,
    allowing us to apply our translation theorems to star-selection games.
    For Pixley-Roy hyperspaces, we are able to extend work of M. Sakai and connect the star-Menger/Rothberger games on the hyperspace
    to the \(\omega\)-Menger/Rothberger games on the ground space.
    Along the way, we uncover connections between cardinal invariants.
    For uniform spaces, we show that the star-Menger/Rothberger game played with uniform covers is equivalent to the
    Menger/Rothberger game played with uniform covers, reinforcing an observation of Lj. Ko{\v{c}}inac.
\end{abstract}

\section{Introduction}

In this paper, we show how the game-theoretic translation techniques developed in \cite{CHContinuousFunctions,CHHyperspaces,CHVietoris}
can be applied to star-selection principles, as studied in
\cite{KocinacStarSelection,KocinacUniform,SakaiStarVersionsMenger,AlamChandra}.
Using a limited-information approach provides a more robust toolkit, and can shed light on various
connections between selection games and selections principles.
We establish a certain relationship between the star-Rothberger (resp. star-Menger) game for the Pixley-Roy hyperspace
of \(X\) with the \(\omega\)-Rothberger (resp. \(\omega\)-Menger) game for \(X\) (see Theorem \ref{thm:FirstBound});
we also establish some equivalences between the uniform-Rothberger and uniform-Menger games on a space
to star versions of those games (see Theorem \ref{thm:MainUniformTheorem}).
We note a game-theoretic characterization of the uniform-Rothberger and uniform-Menger selection principles
for paracompact spaces as an application of the well-known Hurewicz/Pawlikowski theorems (see Corollary \ref{cor:Pawlikowski}).
Along the route to Theorem \ref{thm:FirstBound}, we establish Proposition \ref{prop:AnotherPseudocharacterBound},
a cardinality bound between the so-called
\(k\)-pseudocharacter of \(X\) and the star-Lindel{\"{o}}f degree of the Pixley-Roy hyperspace defined
relative to an ideal of compact subsets of \(X\).
This follows from Proposition \ref{prop:dcBoundsPRAX}, which is a generalization of a cardinality
bound due to Sakai \cite{SakaiCardinal}.
We also expand the applicability of the existing game-theoretic translation machinery
with Theorem \ref{thm:RelationalTranslation} and, with Proposition \ref{prop:SequenceGame},
we compare selection principle properties relative to sequences and sets.

\section{Preliminaries}

We use the word \emph{space} to mean \emph{topological space} and,
unless otherwise stated, all spaces \(X\) are assumed to be Hausdorff.
In general, spaces \(X\) are assumed to be infinite and, when relevant, non-compact.

\subsection{Selection Games and Cover Types} \label{Section:SelectionGames}
In this section, we introduce the various entities
that will be considered in this paper and the general framework for selection games.

\begin{definition}
    For a space \(X\), we let \(\mathscr T_X\) denote the collection of all proper, non-empty open subsets of \(X\).
\end{definition}
\begin{definition}
    Generally, for an open cover \(\mathscr U\) of a topological space \(X\), we say that \(\mathscr U\) is \emph{non-trivial} provided that \(X \not\in \mathscr U\).
    We let \(\mathcal O_X\) denote the collection of all non-trivial open covers of \(X\).
\end{definition}
\begin{definition}
    For a space \(X\) and a class \(\mathcal A\) of closed proper subsets of \(X\), a non-trivial open cover \(\mathscr U\) is an \emph{\(\mathcal A\)-cover} if, for every \(A \in \mathcal A\), there exists \(U \in \mathscr U\) so that \(A \subseteq U\).
    We let \(\mathcal O(X, \mathcal A)\) denote the collection of all \(\mathcal A\)-covers of \(X\).
\end{definition}
Typically, the classes \(\mathcal A\) that will be considered are bases for ideals or bornologies (see \cite{BornologyBook});
that is, collections of closed sets \(\mathcal A\) that cover the space \(X\) with the property that, for every \(A , B \in \mathcal A\), \(A \cup B \in \mathcal A\).
We will also often stipulate that \(\{x\} \in \mathcal A\) for every \(x\in X\).
We will refer to such collections as \emph{ideals of closed sets}.

Two such ideals of particular interest are
\begin{itemize}
    \item
    \([X]^{<\omega}\), the set of finite subsets of \(X\), and
    \item
    \(K(X)\), the set of compact subsets of \(X\).
\end{itemize}
\begin{remark}
    Note that,
    \begin{itemize}
        \item
		if \(\mathcal A = [X]^{<\omega}\), then \(\mathcal O(X,\mathcal A)\) is the collection of all \(\omega\)-covers of \(X\), which will be denoted by \(\Omega_X\).
		\item
		if \(\mathcal A = K(X)\), then \(\mathcal O(X, \mathcal A)\) is the collection of all \(k\)-covers of \(X\), which will be denoted by \(\mathcal K_X\).
    \end{itemize}
\end{remark}

We recall the standard selection principles seen in the literature.
For a primer on selection principles and relevant references, the authors recommend \cite{KocinacSelectedResults,ScheepersSelectionPrinciples,ScheepersNoteMat}.
\begin{definition}
    Let \(\mathcal A\) and \(\mathcal B\) be a classes of sets.
    Then the single- and finite-selection principles are defined, respectively, to be the properties
    \[\mathsf S_1(\mathcal A, \mathcal B) \equiv
    \left(\forall A \in \mathcal A^\omega\right)\left(\exists B \in \prod_{n \in \omega} A_n\right)\ \{B_n : n \in \omega\} \in \mathcal B\]
    and
    \[\mathsf S_{\mathrm{fin}}(\mathcal A, \mathcal B) \equiv
    \left(\forall A \in \mathcal A^\omega\right)\left(\exists B \in \prod_{n \in \omega} [A_n]^{<\omega}\right)\ \bigcup\{B_n : n \in \omega\} \in \mathcal B.\]
\end{definition}

For a space \(X\),
\begin{itemize}
    \item
    \(\mathsf S_{\mathrm{fin}}(\mathcal O_X,\mathcal O_X)\) is known as the \emph{Menger} property.
    \item
    \(\mathsf S_{1}(\mathcal O_X,\mathcal O_X)\) is known as the \emph{Rothberger} property.
    \item
    \(\mathsf S_{\mathrm{fin}}(\Omega_X,\Omega_X)\) is known as the \emph{\(\omega\)-Menger} property.
    \item
    \(\mathsf S_{1}(\Omega_X,\Omega_X)\) is known as the \emph{\(\omega\)-Rothberger} property.
    \item
    \(\mathsf S_{\mathrm{fin}}(\mathcal K_X,\mathcal K_X)\) is known as the \emph{\(k\)-Menger} property.
    \item
    \(\mathsf S_{1}(\mathcal K_X,\mathcal K_X)\) is known as the \emph{\(k\)-Rothberger} property.
\end{itemize}

Selection games arise naturally from the study of selection principles.
Selection games originated as topological games; a history of this development is outlined in Telg{\'{a}}rsky's survey \cite{TelgarskySurvey}
along with a robust list of references.
\begin{definition}
	Given a set \(\mathcal A\) and another set \(\mathcal B\), we define the \emph{finite-selection game}
	\(\mathsf{G}_{\mathrm{fin}}(\mathcal A, \mathcal B)\) for \(\mathcal A\) and \(\mathcal B\) as follows:
	\[
		\begin{array}{c|cccc}
			\mathrm{I} & A_0 & A_1 & A_2 & \ldots \\
			\hline
			\mathrm{II} & \mathcal F_0 & \mathcal F_1 & \mathcal F_2 & \ldots
		\end{array}
	\]
	where \(A_n \in \mathcal A\) and \(\mathcal F_n \in [A_n]^{<\omega}\) for all \(n \in \omega\).
	We declare Two the winner if \(\bigcup\{ \mathcal F_n : n \in \omega \} \in \mathcal B\).
	Otherwise, One wins.
\end{definition}

\begin{definition}
	Similarly, we define the \emph{single-selection game} \(\mathsf{G}_1(\mathcal A, \mathcal B)\) as follows:
	\[
		\begin{array}{c|cccc}
			\mathrm{I} & A_0 & A_1 & A_2 & \ldots \\
			\hline
			\mathrm{II} & x_0 & x_1 & x_2 & \ldots
		\end{array}
	\]
	where each \(A_n \in \mathcal A\) and \(x_n \in A_n\).
	We declare Two the winner if \(\{ x_n : n \in \omega \} \in \mathcal B\).
	Otherwise, One wins.
\end{definition}

\begin{definition}
    We define strategies of various strengths below.
    \begin{itemize}
    \item
    A \emph{strategy for player One} in \(\mathsf{G}_1(\mathcal A, \mathcal B)\) is a function \(\sigma:(\bigcup \mathcal A)^{<\omega} \to \mathcal A\).
    A strategy \(\sigma\) for One is called \emph{winning} if whenever \(x_n \in \sigma\langle x_k : k < n \rangle\) for all \(n \in \omega\), \(\{x_n: n\in\omega\} \not\in \mathcal B\).
    If player One has a winning strategy, we write \(\mathrm{I} \uparrow \mathsf{G}_1(\mathcal A, \mathcal B)\).
    \item
    A \emph{strategy for player Two} in \(\mathsf{G}_1(\mathcal A, \mathcal B)\) is a function \(\tau:\mathcal A^{<\omega} \to \bigcup \mathcal A\).
    A strategy \(\tau\) for Two is \emph{winning} if whenever \(A_n \in \mathcal A\) for all \(n \in \omega\), \(\{\tau(A_0,\ldots,A_n) : n \in \omega\} \in \mathcal B\).
    If player Two has a winning strategy, we write \(\mathrm{II} \uparrow \mathsf{G}_1(\mathcal A, \mathcal B)\).
    \item
    A \emph{predetermined strategy} for One is a strategy which only considers the current turn number.
    We call this kind of strategy predetermined because One is not reacting to Two's moves, they are just running through a pre-planned script.
    Formally it is a function \(\sigma: \omega \to \mathcal A\).
    If One has a winning predetermined strategy, we write \(\mathrm{I} \underset{\mathrm{pre}}{\uparrow} \mathsf{G}_1(\mathcal A, \mathcal B)\).
    \item
    A \emph{Markov strategy} for Two is a strategy which only considers the most recent move of player One and the current turn number.
    Formally it is a function \(\tau:\mathcal A \times \omega \to \bigcup \mathcal A\).
    If Two has a winning Markov strategy, we write \(\mathrm{II} \underset{\mathrm{mark}}{\uparrow} \mathsf{G}_1(\mathcal A, \mathcal B)\).
    \item
    If there is a single element \(A_0 \in \mathcal A\) so that the constant function with value \(A_0\) is a winning strategy for One, we say that One has a \emph{constant winning strategy}, denoted by \(\mathrm{I} \underset{\mathrm{cnst}}{\uparrow} \mathsf{G}_1(\mathcal A, \mathcal B)\).
    \end{itemize}
    These definitions can be extended to \(\mathsf{G}_{\mathrm{fin}}(\mathcal A, \mathcal B)\) in the obvious way.
\end{definition}

\begin{remark} \label{remark:LindelofAndSelection}
    The following are mentioned in \cite[Prop. 15]{ClontzDualSelection} and \cite[Lem. 2.12]{CHVietoris}
    for \(\square \in \{1,\mathrm{fin}\}\).
    \begin{itemize}
        \item
        \(\mathrm{I} \underset{\mathrm{pre}}{\not\uparrow} \mathsf{G}_\square(\mathcal{A},\mathcal{B})\)
        is equivalent to \(\mathsf{S}_\square(\mathcal A, \mathcal B)\).
        \item
        \(\mathrm{I} \underset{\mathrm{cnst}}{\not\uparrow} \mathsf{G}_\square(\mathcal{A},\mathcal{B})\)
        is equivalent to the property that, for every \(A \in \mathcal A\), there is \(\{ x_n : n \in \omega \} \subseteq A\)
        so that \(\{ x_n : n \in \omega \} \in \mathcal B\).
    \end{itemize}
    Note that the property \(\mathrm{I} \underset{\mathrm{cnst}}{\not\uparrow} \mathsf{G}_\square(\mathcal{A},\mathcal{B})\)
    is a Lindel{\"{o}}f-like principle and falls in the category of what Scheepers \cite{ScheepersNoteMat}
    refers to as \emph{Bar-Ilan selection principles}.
\end{remark}

\begin{definition}
  We say that two selection games \(\mathcal G\) and \(\mathcal H\) are \emph{equivalent}, denoted \(\mathcal G \equiv \mathcal H\), if the following hold:
  \begin{itemize}
      \item
      \(\mathrm{II} \underset{\mathrm{mark}}{\uparrow} \mathcal G \iff \mathrm{II} \underset{\mathrm{mark}}{\uparrow} \mathcal H\)
      \item
      \(\mathrm{II} \uparrow \mathcal G \iff \mathrm{II} \uparrow \mathcal H\)
      \item
      \(\mathrm{I} \not\uparrow \mathcal G \iff \mathrm{I} \not\uparrow \mathcal H\)
      \item
      \(\mathrm{I} \underset{\mathrm{pre}}{\not\uparrow} \mathcal G \iff \mathrm{I} \underset{\mathrm{pre}}{\not\uparrow} \mathcal H\)
  \end{itemize}
  If, in addition, \(\mathrm{I} \underset{\mathrm{cnst}}{\not\uparrow} \mathcal G \iff \mathrm{I} \underset{\mathrm{cnst}}{\not\uparrow} \mathcal H\), then we write that \(\mathcal G \rightleftarrows \mathcal H\).
\end{definition}
Note that these notions of game equivalence are more robust than the traditional game equivalence, which only depends on full-information
strategy equivalence.

We now recall a partial ordering on games.
\begin{definition}
  Given selection games \(\mathcal G\) and \(\mathcal H\), we say that \(\mathcal G \leq_{\mathrm{II}} \mathcal H\) if the following implications hold:
  \begin{itemize}
      \item
      \(\mathrm{II} \underset{\mathrm{mark}}{\uparrow} \mathcal G \implies \mathrm{II} \underset{\mathrm{mark}}{\uparrow} \mathcal H\)
      \item
      \(\mathrm{II} \uparrow \mathcal G \implies \mathrm{II} \uparrow \mathcal H\)
      \item
      \(\mathrm{I} \not\uparrow \mathcal G \implies \mathrm{I} \not\uparrow \mathcal H\)
      \item
      \(\mathrm{I} \underset{\mathrm{pre}}{\not\uparrow} \mathcal G \implies \mathrm{I} \underset{\mathrm{pre}}{\not\uparrow} \mathcal H\)
  \end{itemize}
  If, in addition, \(\mathrm{I} \underset{\mathrm{cnst}}{\not\uparrow} \mathcal G \implies \mathrm{I} \underset{\mathrm{cnst}}{\not\uparrow} \mathcal H\), then we write that \(\mathcal G \leq^{+}_{\mathrm{II}} \mathcal H\).
\end{definition}

Observe that the partial ordering \(\mathcal G \leq_{\mathrm{II}} \mathcal H\) effectively asserts that, if Two can win
the game \(\mathcal G\) with a particular level of ``strength,'' then Two can win the game \(\mathcal H\)
with the same kind of strength; this is why we chose to use the subscript \(\mathrm{II}\).
The additional criterion constituting \(\mathcal G \leq^{+}_{\mathrm{II}} \mathcal H\) asserts that the
corresponding Lindel{\"{o}}f-like property, or Bar-Ilan selection principle, is also transferred;
the inequality \(\mathcal G \leq_{\mathrm{II}} \mathcal H\) alone cannot guarantee this since the
implications constituting its definition are dependent upon turn number, in general.

Note that, by Remark \ref{remark:LindelofAndSelection}, if \(\mathsf G_\square(\mathcal A, \mathcal C) \leq_{\mathrm{II}}
\mathsf G_\square(\mathcal B, \mathcal D)\) for \(\square \in \{1,\mathrm{fin}\}\), then we have that, in particular,
\[\mathsf S_\square(\mathcal A, \mathcal C) \implies \mathsf S_\square(\mathcal B, \mathcal D).\]
Also, as the reader can readily verify, for any collections \(\mathcal A\) and \(\mathcal B\),
\[\mathsf G_1(\mathcal A, \mathcal B) \leq^+_{\mathrm{II}} \mathsf G_{\mathrm{fin}}(\mathcal A, \mathcal B).\]

We recall the Translation Theorems, as developed through \cite{CHContinuousFunctions,CHHyperspaces,CHVietoris}.
\begin{theorem}[{\cite[Thm. 2.16]{CHVietoris}}] \label{thm:GeneralTranslation}
    Let \(\mathcal A\), \(\mathcal B\), \(\mathcal C\), and \(\mathcal D\) be collections.
    Suppose there are functions
    \begin{itemize}
        \item \(\overleftarrow{T}_{\mathrm{I},n}:\mathcal B \to \mathcal A\) and
        \item \(\overrightarrow{T}_{\mathrm{II},n}:\left[\bigcup \mathcal A \right]^{<\omega} \times \mathcal B \to \left[\bigcup \mathcal B \right]^{<\omega}\)
    \end{itemize}
    for each \(n \in \omega\) so that
    \begin{enumerate}[label=(P\arabic*)]
        \item \label{FiniteTransA} If \(\mathcal F \in \left[\overleftarrow{T}_{\mathrm{I},n}(B)\right]^{<\omega}\), then \(\overrightarrow{T}_{\mathrm{II},n}(\mathcal F,B) \in [B]^{<\omega}\)
        \item \label{FiniteTransB} If \(\mathcal F_n \in \left[\overleftarrow{T}_{\mathrm{I},n}(B_n)\right]^{<\omega}\) for each \(n \in \omega\)
        and \(\bigcup_{n \in \omega} \mathcal F_n \in \mathcal C\),
        then \[\bigcup_{n \in \omega} \overrightarrow{T}_{\mathrm{II},n}(\mathcal F_n,B_n) \in \mathcal D.\]
    \end{enumerate}
    Then \(\mathsf{G}_{\mathrm{fin}}(\mathcal A, \mathcal C) \leq_{\mathrm{II}} \mathsf{G}_{\mathrm{fin}}(\mathcal B, \mathcal D)\).
    If, in addition, \(\overleftarrow{T}_{\mathrm{I},1} = \overleftarrow{T}_{\mathrm{I},n}\) for all \(n \in \omega\),
    then \(\mathsf{G}_{\mathrm{fin}}(\mathcal A, \mathcal C) \leq^+_{\mathrm{II}} \mathsf{G}_{\mathrm{fin}}(\mathcal B, \mathcal D)\).
\end{theorem}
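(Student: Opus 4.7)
The plan is to handle all implications constituting $\leq_{\mathrm{II}}$ (and the extra one for $\leq^+_{\mathrm{II}}$) uniformly through two dual strategy translations: a \emph{forward} translation carrying winning strategies for Two from $\mathsf{G}_{\mathrm{fin}}(\mathcal A,\mathcal C)$ to $\mathsf{G}_{\mathrm{fin}}(\mathcal B,\mathcal D)$, and a \emph{backward}, contrapositive translation doing the same for winning strategies of One. Both are built from the given ingredients: $\overleftarrow{T}_{\mathrm{I},n}$ pulls One's moves in the new game back to the old, while $\overrightarrow{T}_{\mathrm{II},n}$ pushes Two's responses in the old game forward to the new.

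For Two, given a winning strategy $\tau$ in $\mathsf{G}_{\mathrm{fin}}(\mathcal A,\mathcal C)$, I would define $\tau'$ as follows: given One's play $B_0,B_1,\ldots \in \mathcal B$ in the new game, set $A_k = \overleftarrow{T}_{\mathrm{I},k}(B_k) \in \mathcal A$, let $\mathcal F_n = \tau(A_0,\ldots,A_n)$, and respond with $\overrightarrow{T}_{\mathrm{II},n}(\mathcal F_n, B_n)$. Hypothesis \ref{FiniteTransA} certifies that this response is a legal finite subset of $B_n$, and \ref{FiniteTransB} transfers the winning condition $\bigcup_n \mathcal F_n \in \mathcal C$ (guaranteed by $\tau$) to $\bigcup_n \overrightarrow{T}_{\mathrm{II},n}(\mathcal F_n, B_n) \in \mathcal D$. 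Since $\tau'$ consults only the information that $\tau$ needs, the Markov restriction is preserved.

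For One, I would translate contrapositively: given a winning $\sigma'$ in $\mathsf{G}_{\mathrm{fin}}(\mathcal B,\mathcal D)$, recursively set $B_0 = \sigma'(\langle\rangle)$, $A_0 = \overleftarrow{T}_{\mathrm{I},0}(B_0)$, and, after Two plays $\mathcal F_0,\ldots,\mathcal F_{n-1}$ in the old game, let $B_n = \sigma'(\overrightarrow{T}_{\mathrm{II},0}(\mathcal F_0,B_0),\ldots,\overrightarrow{T}_{\mathrm{II},n-1}(\mathcal F_{n-1},B_{n-1}))$ and $\sigma(\mathcal F_0,\ldots,\mathcal F_{n-1}) = \overleftarrow{T}_{\mathrm{I},n}(B_n)$. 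A hypothetical play defeating $\sigma$ would give $\bigcup_n \mathcal F_n \in \mathcal C$, and \ref{FiniteTransB} would then produce a play defeating $\sigma'$, contradicting the assumption. Predetermined-ness of $\sigma'$ descends because the $B_n$ (and hence $A_n$) never consult Two's history. For the extra clause of $\leq^+_{\mathrm{II}}$, a constant $\sigma' \equiv B_0$ produces moves $A_n = \overleftarrow{T}_{\mathrm{I},n}(B_0)$ for $\sigma$, which amount to a single constant in $n$ precisely under the extra hypothesis $\overleftarrow{T}_{\mathrm{I},1} = \overleftarrow{T}_{\mathrm{I},n}$.

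The main obstacle I anticipate is organizational rather than mathematical: verifying that these two constructions respect each of the five strategic regimes (full-information winning, Markov winning, defeat of arbitrary strategies, defeat of predetermined strategies, and defeat of constant strategies) simultaneously. All the substantive content is packed into \ref{FiniteTransA} (legality of translated responses) and \ref{FiniteTransB} (transfer of the winning condition); the rest is careful bookkeeping.
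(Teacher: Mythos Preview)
The paper does not actually prove this theorem; it is quoted verbatim from \cite[Thm.~2.16]{CHVietoris} and used as a black box. So there is no in-paper proof to compare against.

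That said, your proposal is correct and is precisely the standard argument one expects (and the one given in the cited reference): use $\overleftarrow{T}_{\mathrm{I},n}$ to pull One's moves from the $\mathcal B$-game back to the $\mathcal A$-game, and $\overrightarrow{T}_{\mathrm{II},n}$ to push Two's responses forward, with \ref{FiniteTransA} guaranteeing legality and \ref{FiniteTransB} guaranteeing the winning condition transfers. Your dual treatment---forward translation for Two's strategies, contrapositive backward translation for One's---cleanly handles all four implications of $\leq_{\mathrm{II}}$, and your observation that a constant $\sigma'$ yields $A_n = \overleftarrow{T}_{\mathrm{I},n}(B_0)$, which collapses to a single value exactly when the $\overleftarrow{T}_{\mathrm{I},n}$ are all equal, correctly isolates the role of the extra hypothesis for $\leq^+_{\mathrm{II}}$. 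There is no gap.
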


\begin{corollary}[{\cite[Cor. 2.17]{CHVietoris}}] \label{cor:Translation}
    Let \(\mathcal A\), \(\mathcal B\), \(\mathcal C\), and \(\mathcal D\) be collections.
    Suppose there are functions
    \begin{itemize}
        \item \(\overleftarrow{T}_{\mathrm{I},n} :\mathcal B \to \mathcal A\) and
        \item \(\overrightarrow{T}_{\mathrm{II},n} : \left(\bigcup \mathcal A \right) \times \mathcal B \to \bigcup \mathcal B\)
    \end{itemize}
    for each \(n \in \omega\) so that the following two properties hold.
    \begin{enumerate}[label={(\(\hat{P}\)\arabic*)}]
        \item \label{translationPropI} If \(x \in \overleftarrow{T}_{\mathrm{I},n}(B)\),
        then \(\overrightarrow{T}_{\mathrm{II},n}(x,B) \in B\).
        \item \label{translationPropII} If \(\mathcal F_n \in \left[\overleftarrow{T}_{\mathrm{I},n}(B_n)\right]^{<\omega}\)
        and \(\bigcup_{n \in \omega} \mathcal F_n \in \mathcal C\),
        then \[\bigcup_{n \in \omega} \left\{ \overrightarrow{T}_{\mathrm{II},n}(x,B_n) : x \in \mathcal F_n \right\} \in \mathcal D.\]
    \end{enumerate}
    Then, for \(\square \in \{1,\mathrm{fin}\}\),
    \(\mathsf G_\square(\mathcal A,\mathcal C) \leq_{\mathrm{II}} \mathsf G_\square(\mathcal B, \mathcal D)\).
    If, in addition, \(\overleftarrow{T}_{\mathrm{I},1} = \overleftarrow{T}_{\mathrm{I},n}\) for all \(n \in \omega\), then \(\mathsf{G}_{\square}(\mathcal A, \mathcal C) \leq^+_{\mathrm{II}} \mathsf{G}_{\square}(\mathcal B, \mathcal D)\).
\end{corollary}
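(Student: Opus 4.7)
The plan is to treat Corollary \ref{cor:Translation} as a specialization of Theorem \ref{thm:GeneralTranslation} in the finite case and to carry out a parallel direct strategy translation in the single case.

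For \(\square = \mathrm{fin}\), I would define
\[
\overrightarrow{T}'_{\mathrm{II},n}(\mathcal F, B) = \bigl\{ \overrightarrow{T}_{\mathrm{II},n}(x, B) : x \in \mathcal F \bigr\}
\]
for \(\mathcal F \in [\bigcup \mathcal A]^{<\omega}\) and \(B \in \mathcal B\). When \(\mathcal F \subseteq \overleftarrow T_{\mathrm{I},n}(B)\), each \(\overrightarrow T_{\mathrm{II},n}(x,B)\) lies in \(B\) by hypothesis \ref{translationPropI}, so property \ref{FiniteTransA} of Theorem \ref{thm:GeneralTranslation} holds, and hypothesis \ref{translationPropII} is precisely property \ref{FiniteTransB}. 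Thus Theorem \ref{thm:GeneralTranslation} applies and yields \(\mathsf G_{\mathrm{fin}}(\mathcal A, \mathcal C) \leq_{\mathrm{II}} \mathsf G_{\mathrm{fin}}(\mathcal B, \mathcal D)\); the constancy hypothesis on \(\overleftarrow T_{\mathrm{I},n}\) upgrades this to \(\leq^+_{\mathrm{II}}\).

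For \(\square = 1\), I would translate strategies directly, mirroring the scheme used for the finite case. For the Two-side transfers, given a strategy \(\tau\) for Two in \(\mathsf G_1(\mathcal A, \mathcal C)\), define \(\tau'\) in \(\mathsf G_1(\mathcal B, \mathcal D)\) by
\[
\tau'(B_0, \ldots, B_n) = \overrightarrow{T}_{\mathrm{II},n}\bigl( \tau(\overleftarrow{T}_{\mathrm{I},0}(B_0), \ldots, \overleftarrow{T}_{\mathrm{I},n}(B_n)),\, B_n \bigr).
\]
By \ref{translationPropI}, \(\tau'\) is a legal strategy, and applying \ref{translationPropII} with \(\mathcal F_n\) the singleton \(\{\tau(\overleftarrow T_{\mathrm{I},0}(B_0), \ldots, \overleftarrow T_{\mathrm{I},n}(B_n))\}\) converts wins to wins. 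Markov dependence is preserved because \(\tau'\) depends only on \((B_n, n)\) exactly when \(\tau\) depends only on \((A_n, n)\). For the One-side transfers, given a strategy \(\sigma\) for One in \(\mathsf G_1(\mathcal B, \mathcal D)\), build a strategy \(\sigma'\) in \(\mathsf G_1(\mathcal A, \mathcal C)\) by inductively setting \(B_k = \sigma(\overrightarrow T_{\mathrm{II},0}(x_0, B_0), \ldots, \overrightarrow T_{\mathrm{II},k-1}(x_{k-1}, B_{k-1}))\) along any partial Two play \(x_0, \ldots, x_{n-1}\) and outputting \(\overleftarrow T_{\mathrm{I},n}(B_n)\). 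A Two play defeating \(\sigma'\) would translate via \(y_k = \overrightarrow T_{\mathrm{II},k}(x_k, B_k)\), again through \ref{translationPropII} on singletons, to a Two play defeating \(\sigma\). If \(\sigma\) is predetermined, the \(B_k\) are independent of Two's moves, so \(\sigma'\) is predetermined; and under the constancy hypothesis, a constant \(\sigma \equiv B_0\) yields the constant \(\sigma' \equiv \overleftarrow T_{\mathrm{I}}(B_0)\), giving \(\leq^+_{\mathrm{II}}\).

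The only delicate point is confirming that each of the four (or five) strategy strengths is preserved by these translations. Since each map is memoryless in the appropriate sense and the One-side translation runs parallel to the one-turn composition used on the Two side, this amounts to unpacking definitions rather than a real obstacle.
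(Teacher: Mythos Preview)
Your approach is correct. Note, however, that the paper does not supply its own proof of Corollary~\ref{cor:Translation}: it is quoted verbatim from \cite[Cor.~2.17]{CHVietoris}, so there is nothing in the present paper to compare against beyond its placement immediately after Theorem~\ref{thm:GeneralTranslation}. Your treatment of the \(\square=\mathrm{fin}\) case---lifting \(\overrightarrow{T}_{\mathrm{II},n}\) to finite sets pointwise and invoking Theorem~\ref{thm:GeneralTranslation}---is exactly what the ``Corollary'' label invites, and your direct strategy translation for \(\square=1\) is the standard parallel argument (Theorem~\ref{thm:GeneralTranslation} as stated covers only the finite-selection game, so some separate work for the single-selection case is indeed required).
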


\begin{remark} \label{rmk:TranslationConvention}
    In the situation where \(\overleftarrow{T}_{\mathrm{I},1} = \overleftarrow{T}_{\mathrm{I},n}\) for all \(n \in \omega\), we may omit the second subscript and simply write \(\overleftarrow{T}_{\mathrm{I}}\).

    Also, since the conditions in the Translation Theorems relating to the functions \(\overrightarrow{T}_{\mathrm{II},n}\)
    only depend on the range of the corresponding \(\overleftarrow{T}_{\mathrm{I},n}\), we will often partially define
    \(\overrightarrow{T}_{\mathrm{II},n}\) on the relevant inputs.
    In such a case, extending \(\overrightarrow{T}_{\mathrm{II},n}\) beyond the given definition to its entire domain
    is immaterial to the application of the theorem, and thus any mention of such an extension will be omitted.
\end{remark}

\begin{remark} \label{rmk:Lindelof}
    Since, for collections \(\mathcal A\) and \(\mathcal B\), and \(\square \in \{1,\mathrm{fin}\}\),
    \begin{align*}
        \mathrm{II} \underset{\mathrm{mark}}{\uparrow} \mathsf G_\square(\mathcal A,\mathcal B)
        &\implies \mathrm{II} \uparrow \mathsf G_\square(\mathcal A,\mathcal B)\\
        &\implies \mathrm{I} \not\uparrow \mathsf G_\square(\mathcal A,\mathcal B)\\
        &\implies \mathrm{I} \underset{\mathrm{pre}}{\not\uparrow} \mathsf G_\square(\mathcal A,\mathcal B)\\
        &\implies \mathrm{I} \underset{\mathrm{cnst}}{\not\uparrow} \mathsf G_\square(\mathcal A,\mathcal B),
    \end{align*}
    when applying the Translation Theorems to show
    \[\mathsf G_\square(\mathcal A, \mathcal C) \leq_{\mathrm{II}} \mathsf G_\square(\mathcal B, \mathcal D)
    \text{ or }
    \mathsf G_\square(\mathcal A, \mathcal C) \leq^+_{\mathrm{II}} \mathsf G_\square(\mathcal B, \mathcal D),\]
    we may assume that \(\mathrm{I} \underset{\mathrm{cnst}}{\not\uparrow} \mathsf G_\square(\mathcal A, \mathcal C)\);
    that is, we may assume that, as in Remark \ref{remark:LindelofAndSelection}, for every \(A \in \mathcal A\),
    there is some \(\{x_n : n \in \omega\} \subseteq A\) with \(\{x_n :n \in \omega \} \in \mathcal B\).

    In a similar way, we may also assume that \(\mathrm{I} \underset{\mathrm{pre}}{\not\uparrow} \mathsf G_\square(\mathcal A,\mathcal B)\)
    when applying the Translation Theorems to show
    \(\mathsf G_\square(\mathcal A, \mathcal C) \leq_{\mathrm{II}} \mathsf G_\square(\mathcal B, \mathcal D),\)
    which is to say that we may assume \(\mathsf S_\square(\mathcal A,\mathcal C)\) in such a setting.
\end{remark}

As we often make use of the Axiom of Choice, we offer a generalization of the Translation Theorem \ref{cor:Translation}
to make certain applications more readable.
Before we present the generalization, we isolate a technique that will be used.
\begin{remark} \label{rmk:Choice}
    Suppose \(R \subseteq X \times Y\) is so that \(R[x] = \{ y \in Y : (x,y) \in R \} \neq \emptyset\)
    for each \(x \in X\).
    We can define an equivalence relation \(\simeq\) on \(X\) by saying that \(x_1 \simeq x_2\) if
    \(R[x_1] = R[x_2]\).
    We can then define, for \(\mathbf x \in X/{\simeq}\), \(\hat R[\mathbf x] = R[x]\) where \(x \in \mathbf x\).
    Now we apply the Axiom of Choice to select, for each \(\mathbf x \in X/{\simeq}\),
    some \(\hat \gamma (\mathbf x) \in R[\mathbf x]\).
    This \(\hat\gamma\) extends to a function \(\gamma : X \to Y\) where \(\gamma(x_1) = \gamma(x_2)\)
    whenever \(x_1 \simeq x_2\).
\end{remark}

\begin{theorem} \label{thm:RelationalTranslation}
    Suppose \(\mathcal A, \mathcal B, \mathcal C\) and \(\mathcal D\) are collections.
    Suppose there are functions
    \(\overleftarrow{R}_{\mathrm{I},n}:\mathcal B \to \mathcal P(\mathcal A)\) and
    \(\overrightarrow{R}_{\mathrm{II},n}:\bigcup \mathcal A \times \mathcal A \times \mathcal B \to \bigcup \mathcal B\)
    for every \(n \in \omega\) so that
    \begin{enumerate}[label=(R\arabic*)]
        \item \label{RelationTranslationA}
        for all \(B \in \mathcal B\), \(\overleftarrow{R}_{\mathrm{I},n}(B) \neq \emptyset\),
        \item \label{RelationTranslationB}
        whenever \(x \in A \in \overleftarrow{R}_{\mathrm{I},n}(B)\), then \(\overrightarrow{R}_{\mathrm{II},n}(x,A,B) \in B\), and
        \item \label{RelationTranslationC}
        whenever \(F_n \subseteq A_n \in \overleftarrow{R}_{\mathrm{I},n}(B_n)\) are finite so that \(\bigcup_{n\in\omega} F_n \in \mathcal C\), then
        \[
        \left\{\overrightarrow{R}_{\mathrm{II},n}(x,A_n,B_n) : n \in \omega \text{ and } x \in F_n \right\} \in \mathcal D.
        \]
    \end{enumerate}
    Then \(\mathsf G_\square(\mathcal A, \mathcal C) \leq_{\mathrm{II}} \mathsf G_\square(\mathcal B, \mathcal D)\), where \(\square\in\{1,\mathrm{fin}\}\).
    If, in addition, \(\overleftarrow{R}_{\mathrm{I},1} = \overleftarrow{R}_{\mathrm{I},n}\) for all \(n \in \omega\),
    then \(\mathsf G_\square(\mathcal A, \mathcal C) \leq^+_{\mathrm{II}} \mathsf G_\square(\mathcal B, \mathcal D)\).
\end{theorem}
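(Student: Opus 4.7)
The plan is to reduce Theorem \ref{thm:RelationalTranslation} to Corollary \ref{cor:Translation} by invoking the Axiom of Choice---in the form articulated in Remark \ref{rmk:Choice}---to collapse each set-valued map $\overleftarrow{R}_{\mathrm{I},n}$ into an ordinary single-valued one, and then let the already-proved translation machinery do the work.

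First I would apply Remark \ref{rmk:Choice} to each of the relations
\[R_n := \{(B,A) \in \mathcal{B} \times \mathcal{A} : A \in \overleftarrow{R}_{\mathrm{I},n}(B)\},\]
whose sections are non-empty by \ref{RelationTranslationA}, to obtain a function $\gamma_n : \mathcal{B} \to \mathcal{A}$ with $\gamma_n(B) \in \overleftarrow{R}_{\mathrm{I},n}(B)$ for every $B \in \mathcal{B}$. I would then set
\[\overleftarrow{T}_{\mathrm{I},n}(B) := \gamma_n(B), \qquad \overrightarrow{T}_{\mathrm{II},n}(x,B) := \overrightarrow{R}_{\mathrm{II},n}(x,\gamma_n(B),B),\]
the latter partially defined on the relevant inputs as allowed by Remark \ref{rmk:TranslationConvention}.

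Next, I would check the two hypotheses of Corollary \ref{cor:Translation}. Condition \ref{translationPropI} is immediate: if $x \in \overleftarrow{T}_{\mathrm{I},n}(B) = \gamma_n(B) \in \overleftarrow{R}_{\mathrm{I},n}(B)$, then \ref{RelationTranslationB} forces $\overrightarrow{T}_{\mathrm{II},n}(x,B) \in B$. For \ref{translationPropII}, given finite sets $\mathcal{F}_n \in [\overleftarrow{T}_{\mathrm{I},n}(B_n)]^{<\omega}$ with $\bigcup_n \mathcal{F}_n \in \mathcal{C}$, the family $(\mathcal{F}_n, \gamma_n(B_n), B_n)_{n \in \omega}$ meets the hypothesis of \ref{RelationTranslationC}, so
\[\bigcup_{n \in \omega}\left\{\overrightarrow{T}_{\mathrm{II},n}(x,B_n) : x \in \mathcal{F}_n\right\} = \left\{\overrightarrow{R}_{\mathrm{II},n}(x,\gamma_n(B_n),B_n) : n \in \omega,\ x \in \mathcal{F}_n\right\} \in \mathcal{D}.\]
Corollary \ref{cor:Translation} then yields $\mathsf{G}_\square(\mathcal{A},\mathcal{C}) \leq_{\mathrm{II}} \mathsf{G}_\square(\mathcal{B},\mathcal{D})$ for $\square \in \{1,\mathrm{fin}\}$.

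For the refined conclusion, assume $\overleftarrow{R}_{\mathrm{I},1} = \overleftarrow{R}_{\mathrm{I},n}$ for every $n$. Then all the relations $R_n$ coincide, so Remark \ref{rmk:Choice} need only be invoked once, producing a single $\gamma$ for which $\overleftarrow{T}_{\mathrm{I},n}(B) := \gamma(B)$ is independent of $n$; the refined version of Corollary \ref{cor:Translation} then delivers $\mathsf{G}_\square(\mathcal{A},\mathcal{C}) \leq^+_{\mathrm{II}} \mathsf{G}_\square(\mathcal{B},\mathcal{D})$. There is no real obstacle here---the whole argument is a mechanical unfolding of definitions---but one wants to be careful to use Remark \ref{rmk:Choice} (rather than raw Choice applied turn-by-turn during a play) so that the $\gamma_n$ are genuine functions of $B$ alone, which is what legitimizes the definitions of $\overleftarrow{T}_{\mathrm{I},n}$ and $\overrightarrow{T}_{\mathrm{II},n}$.
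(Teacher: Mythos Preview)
Your proof is correct and follows essentially the same approach as the paper: use Remark \ref{rmk:Choice} to select a choice function \(\overleftarrow{T}_{\mathrm{I},n}\) for each \(\overleftarrow{R}_{\mathrm{I},n}\), define \(\overrightarrow{T}_{\mathrm{II},n}(x,B) = \overrightarrow{R}_{\mathrm{II},n}(x,\overleftarrow{T}_{\mathrm{I},n}(B),B)\), and verify the hypotheses of Corollary \ref{cor:Translation}, handling the constant case by making a single choice. The paper's write-up is slightly terser but the argument is the same.
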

\begin{proof}
    Define \(\overleftarrow{T}_{\mathrm{I},n}:\mathcal B \to \mathcal A\) to be a choice function for \(\overleftarrow{R}_{\mathrm{I},n}\)
    as in Remark \ref{rmk:Choice};
    that is, for each \(B \in \mathcal B\), \(\overleftarrow{T}_{\mathrm{I},n}(B) \in \overleftarrow{R}_{\mathrm{I},n}(B)\) and, for
    \(B_1, B_2 \in \mathcal B\), \(\overleftarrow{R}_{\mathrm{I},n}(B_1) = \overleftarrow{R}_{\mathrm{I},n}(B_2)\) guarantees that \(\overleftarrow{T}_{\mathrm{I},n}(B_1) = \overleftarrow{T}_{\mathrm{I},n}(B_2)\).
    When \(x \in \overleftarrow{T}_{\mathrm{I},n}(B)\), define \(\overrightarrow{T}_{\mathrm{II},n}(x,B)\) by
    \[
    \overrightarrow{T}_{\mathrm{II},n}(x,B) = \overrightarrow{R}_{\mathrm{II},n}\left(x,\overleftarrow{T}_{\mathrm{I},n}(B),B\right).
    \]
    From here, \ref{RelationTranslationB} and \ref{RelationTranslationC} ensure that \(\overleftarrow{T}_{\mathrm{I},n}\) and \(\overrightarrow{T}_{\mathrm{II},n}\) are as required in Corollary \ref{cor:Translation}.

    To address the situation in which \(\overleftarrow{R}_{\mathrm{I},1} = \overleftarrow{R}_{\mathrm{I},n}\) for all \(n \in \omega\),
    simply ensure that \(\overleftarrow{T}_{\mathrm{I},1} = \overleftarrow{T}_{\mathrm{I},n}\) for all \(n \in \omega\) in the
    construction of \(\overleftarrow{T}_{\mathrm{I},n}\).
\end{proof}

\subsection{Star-Selection Games}

The primary goal in this section is to capture star-selection games as selection games as discussed in Section \ref{Section:SelectionGames}.
For a systematic treatment of star-selection principles, the authors recommend \cite{KocinacStarSelection}.
\begin{definition}
    If \(\mathscr U\) is an open cover of \(X\) and \(A \subseteq X\),
    the star of \(A\) relative to \(\mathscr U\) is
    \(\mathrm{St}(A, \mathscr U) = \bigcup \{U \in \mathscr U : U \cap A \neq \emptyset\}\).
    When dealing with singleton sets, we will use
    \(\mathrm{St}(x,\mathscr U)\) in place of \(\mathrm{St}(\{x\},\mathscr U)\).
\end{definition}
\begin{definition}
    For a space \(X\), we define the star-selection principles to be the following:
    \begin{itemize}
    \item
    \(\mathsf S_{\mathrm{fin}}^*(\mathcal O_X, \mathcal B)\) is the property that, for each sequence \(\langle \mathscr U_n : n \in \omega \rangle\) of open covers,
    there are finite subsets \(\mathscr V_n \subseteq \mathscr U_n\) for each \(n \in \omega\) so that
    \(\{\mathrm{St}(\bigcup \mathscr V_n, \mathscr U_n) : n \in \omega\} \in \mathcal B\).
    \item
    \(\mathsf S_1^*(\mathcal O_X, \mathcal B)\) is the property that, for each sequence \(\langle \mathscr U_n : n \in \omega \rangle\) of open covers,
    there are open sets \(U_n \in \mathscr U_n\) for each \(n \in \omega\) so that \(\{\mathrm{St}(U_n, \mathscr U_n) : n \in \omega\} \in \mathcal B\).
    \item
    \(\mathsf{SS}_{\mathcal J}^*(\mathcal O_X, \mathcal B)\) is the property that, for each sequence \(\langle \mathscr U_n : n \in \omega \rangle\) of open covers,
    there are \(J_n \in \mathcal J\) for each \(n \in \omega\) so that \(\{\mathrm{St}(J_n, \mathscr U_n) : n \in \omega\} \in \mathcal B\).
    \end{itemize}
\end{definition}
For any space \(X\),
\begin{itemize}
    \item
    \(\mathsf S^\ast_{\mathrm{fin}}(\mathcal O_X,\mathcal O_X)\) is known as the \emph{star-Menger} property.
    \item
    \(\mathsf S^\ast_{1}(\mathcal O_X,\mathcal O_X)\) is known as the \emph{star-Rothberger} property.
\end{itemize}

As defined, we cannot apply the Translation Theorems to star-selection principles.
We remedy this in the following definition and proposition by finding a way to write
star-selection principles as ordinary selection principles on more complicated sets.

In what follows, when discussing collections of subsets of \(X\), we will identify \(X\) with \([X]^1\), the set of singletons of \(X\),
where the understood bijection is \(x \mapsto \{x\}\).
\begin{definition}
    Let \(X\) be a space.
    \begin{itemize}
        \item If \(\mathscr U\) is an open cover of \(X\) and \(\mathcal J\) is a collection of subsets of \(X\), then \(\mathrm{Cons}(\mathcal J, \mathscr U) = \{ \mathrm{St}(J,\mathscr U) : J \in \mathcal J\}\). Notice that
        \[
        \mathrm{Cons}(\mathscr U, \mathscr U) = \{\mathrm{St}(U, \mathscr U) : U \in \mathscr U\}.
        \]
        \item If \(\mathscr C\) is a collection of covers of \(X\) and \(f:\mathscr C \to \mathcal P(\mathcal P(X))\), then
        \[
        \mathrm{Gal}(f, \mathscr C) = \{\mathrm{Cons}(f(\mathscr U), \mathscr U) : \mathscr U \in \mathscr C\}.
        \]
        If \(f\) is constantly \(\mathcal A\), we write \(\mathcal A\) instead of \(f\).
    \end{itemize}
\end{definition}
Notice that
\[
\mathrm{Gal}(X,\mathcal O_X) = \{\mathrm{Cons}(X,\mathscr U) : \mathscr U \in \mathcal O_X\} = \{\{\mathrm{St}(x,\mathscr U) : x \in X\} : \mathscr U \in \mathcal O_X\},
\]
\[
\mathrm{Gal}(\mathcal A, \mathcal O_X) = \{\mathrm{Cons}(\mathcal A,\mathscr U) : \mathscr U \in \mathcal O_X\} = \{\{\mathrm{St}(A,\mathscr U) : A \in \mathcal A\} : \mathscr U \in \mathcal O_X\},
\]
and
\[
\mathrm{Gal}(\mathrm{id},\mathcal O_X) = \{\mathrm{Cons}(\mathscr U, \mathscr U) : \mathscr U \in \mathcal O_X\} = \{\{\mathrm{St}(U, \mathscr U) : U \in \mathscr U\} : \mathscr U \in \mathcal O_X\}.
\]

We chose the notation \(\mathrm{Cons}\) to stand for \emph{constellation}, a collection of stars;
we chose the notation \(\mathrm{Gal}\) to stand for \emph{galaxy}, a collection of constellations, of sorts.

The reader may readily establish the following.
\begin{proposition} \label{prop:StarSelection}
    The following equivalences hold.
    \begin{itemize}
        \item \(\mathsf S_{\mathrm{fin}}^*(\mathcal O_X, \mathcal B)\) is equivalent to \(\mathsf S_{\mathrm{fin}}(\mathrm{Gal}(\mathrm{id},\mathcal O_X), \mathcal B)\).
        \item \(\mathsf S_1^*(\mathcal O_X, \mathcal B)\) is equivalent to \(\mathsf S_1(\mathrm{Gal}(\mathrm{id}, \mathcal O_X), \mathcal B)\).
        \item \(\mathsf{SS}_{\mathcal J}^*(\mathcal O_X, \mathcal B)\) is equivalent to \(\mathsf S_1(\mathrm{Gal}(\mathcal J,\mathcal O_X), \mathcal B)\).
    \end{itemize}
\end{proposition}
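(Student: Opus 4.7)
The plan is to unfold the definitions of $\mathsf S_\square$, $\mathrm{Cons}$, and $\mathrm{Gal}$, and to exhibit a direct correspondence between inputs and selections in the star-selection and ordinary-selection formulations of each of the three items. Items 2 and 3 are essentially tautological; item 1 requires a single reconciling identity.

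For item 2, every element of $\mathrm{Gal}(\mathrm{id}, \mathcal O_X)$ is by definition of the form $\{\mathrm{St}(U, \mathscr U) : U \in \mathscr U\}$ for some $\mathscr U \in \mathcal O_X$. Given a sequence $\langle \mathscr U_n \rangle$ of open covers, set $A_n = \{\mathrm{St}(U, \mathscr U_n) : U \in \mathscr U_n\} \in \mathrm{Gal}(\mathrm{id}, \mathcal O_X)$. Conversely, given a sequence $\langle A_n \rangle$ in $\mathrm{Gal}(\mathrm{id}, \mathcal O_X)^\omega$, the Axiom of Choice provides underlying open covers $\mathscr U_n$ with $A_n = \{\mathrm{St}(U, \mathscr U_n) : U \in \mathscr U_n\}$. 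Under this correspondence, a selection $U_n \in \mathscr U_n$ for the star-selection principle matches the selection $\mathrm{St}(U_n, \mathscr U_n) \in A_n$ for the ordinary selection principle, and the resulting sequences $\{\mathrm{St}(U_n, \mathscr U_n) : n \in \omega\}$ coincide as sets. Item 3 proceeds identically, with $\mathrm{Gal}(\mathcal J, \mathcal O_X)$ and selections $\mathrm{St}(J_n, \mathscr U_n)$ for $J_n \in \mathcal J$ in place of $U_n \in \mathscr U_n$.

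Item 1 uses the same input-side correspondence, but a finite selection now has two shapes: the star-selection principle records the single set $\mathrm{St}(\bigcup \mathscr V_n, \mathscr U_n)$ per round for a finite $\mathscr V_n \subseteq \mathscr U_n$, while the finite-selection principle records the whole finite subset $\mathcal F_n = \{\mathrm{St}(U, \mathscr U_n) : U \in \mathscr V_n\} \subseteq A_n$ and then forms $\bigcup_n \mathcal F_n$. These are reconciled by the identity
\[
\mathrm{St}\Bigl(\bigcup \mathscr V_n, \mathscr U_n\Bigr) = \bigcup\{\mathrm{St}(U, \mathscr U_n) : U \in \mathscr V_n\},
\]
so that $\bigcup_n \mathrm{St}(\bigcup \mathscr V_n, \mathscr U_n) = \bigcup_n \bigcup \mathcal F_n$; for the classes $\mathcal B$ of interest in this paper (classes of covers, where membership is governed by the total union of the members), the two winning conditions thereby coincide. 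The only mild obstacle is this syntactic bookkeeping in item 1; the remainder of the argument is a routine unpacking of definitions.
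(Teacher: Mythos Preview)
Your proposal is correct and is precisely the routine definition-unfolding that the paper leaves to the reader (the paper offers no proof, only the remark ``the reader may readily establish the following''). Your honest caveat about item~1---that the set \(\{\mathrm{St}(\bigcup\mathscr V_n,\mathscr U_n):n\in\omega\}\) and the set \(\bigcup_n\{\mathrm{St}(U,\mathscr U_n):U\in\mathscr V_n\}\) differ in general, and that their membership in \(\mathcal B\) coincides only under a mild closure assumption on \(\mathcal B\)---is a genuine subtlety the paper glosses over; it does not affect any downstream use in the paper, where \(\mathcal B\) is always a cover class of the appropriate kind.
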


Since we will be interested in selection games, we will let, for notational brevity,
\[\mathsf G_\square^\ast(\mathcal A , \mathcal B) = \mathsf G_\square(\mathrm{Gal}(\mathrm{id},\mathcal A),\mathcal B)\]
and
\[
\mathsf{SG}^\ast_{\mathcal J, \square}(\mathcal A, \mathcal B) = \mathsf G_\square(\mathrm{Gal}(\mathcal J,\mathcal A), \mathcal B),
\]
where \(\square \in \{1,\mathrm{fin}\}\).
When \(\mathcal J = K(X)\), we'll use \(\mathsf{SG}^\ast_{\mathbb K, \square}(\mathcal A, \mathcal B)\) to denote
\(\mathsf{SG}^\ast_{\mathcal J, \square}(\mathcal A, \mathcal B)\).

Observe that the selection principle \(\mathsf{SS}_{\mathcal J}^*(\mathcal A, \mathcal B)\) traditionally appears
in the single-selection context, as noted in Proposition \ref{prop:StarSelection}.
In certain scenarios, like when \(\mathcal J\) forms an ideal of closed subsets of the ground space,
then there is no difference between single- and finite-selections,
as we'll make explicit in Proposition \ref{prop:starSingleVsFinite}.
So the distinction between the selection principles \(\mathsf S_1(\mathrm{Gal}(\mathcal J,\mathcal A), \mathcal B)\)
and \(\mathsf S_{\mathrm{fin}}(\mathrm{Gal}(\mathcal J,\mathcal A), \mathcal B)\) is generally a bit more subtle,
as we illustrate in Examples \ref{ex:starSingleFinite} and \ref{ex:UniformReals}.
This distinction between the two selection principles is why we have chosen to use the notation
\(\mathsf{SG}^\ast_{\mathcal J, \square}(\mathcal A, \mathcal B)\) for the corresponding games.

Recall that a collection \(\mathcal A\) is said to \emph{refine} another collection \(\mathcal B\),
also stated as \(\mathcal A\) \emph{refines} \(\mathcal B\),
if, for every \(A\in \mathcal A\), there exists \(B \in \mathcal B\) so that \(A \subseteq B\).

\begin{proposition} \label{prop:starSingleVsFinite}
    Let \(X\) be a space, \(\mathcal J\) be an ideal of closed sets of \(X\), \(\mathcal Q\) be a collection of open covers of \(X\),
    and \(\mathcal B\) be a collection so that, if \(\mathscr V \in \mathcal B\) and \(\mathscr V\) refines a collection of open sets \(\mathscr U\),
    then \(\mathscr U \in \mathcal B\).
    Then
    \[\mathsf{SG}^\ast_{\mathcal J, 1}(\mathcal Q, \mathcal B) \rightleftarrows
    \mathsf{SG}^\ast_{\mathcal J, \mathrm{fin}}(\mathcal Q, \mathcal B).\]
\end{proposition}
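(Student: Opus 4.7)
The inequality $\mathsf{SG}^\ast_{\mathcal J, 1}(\mathcal Q, \mathcal B) \leq^+_{\mathrm{II}} \mathsf{SG}^\ast_{\mathcal J, \mathrm{fin}}(\mathcal Q, \mathcal B)$ is immediate from the general fact, noted just before the Translation Theorems, that $\mathsf G_1 \leq^+_{\mathrm{II}} \mathsf G_{\mathrm{fin}}$ on any collections. So the substance of the proposition is the reverse inequality $\mathsf{SG}^\ast_{\mathcal J, \mathrm{fin}}(\mathcal Q, \mathcal B) \leq^+_{\mathrm{II}} \mathsf{SG}^\ast_{\mathcal J, 1}(\mathcal Q, \mathcal B)$, which I will establish by a direct translation rather than by invoking the Translation Theorems (whose hypotheses match the selection cardinality on each side).

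The key observation is that $\mathrm{Cons}(\mathcal J, \mathscr U)$ is closed under finite unions whenever $\mathcal J$ is an ideal: given $J_1,\ldots,J_k \in \mathcal J$, the ideal property yields $J := J_1 \cup \cdots \cup J_k \in \mathcal J$, and since the star operation distributes over unions one has $\mathrm{St}(J, \mathscr U) = \bigcup_{i \leq k} \mathrm{St}(J_i, \mathscr U) \in \mathrm{Cons}(\mathcal J, \mathscr U)$. This lets me define a consolidation map $\Phi_{\mathscr U} \colon [\mathrm{Cons}(\mathcal J, \mathscr U)]^{<\omega} \to \mathrm{Cons}(\mathcal J, \mathscr U)$ by $\Phi_{\mathscr U}(\mathcal F) = \bigcup \mathcal F$. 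Crucially, every member of $\mathcal F$ is contained in $\Phi_{\mathscr U}(\mathcal F)$, so $\mathcal F$ refines the singleton $\{\Phi_{\mathscr U}(\mathcal F)\}$.

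To transfer Two's strategies (Markov or full-information) from $\mathsf{SG}^\ast_{\mathcal J, \mathrm{fin}}(\mathcal Q, \mathcal B)$ to $\mathsf{SG}^\ast_{\mathcal J, 1}(\mathcal Q, \mathcal B)$, I post-compose each turn's output with the relevant $\Phi_{\mathscr U_n}$. If the original finite responses $\mathcal F_n$ satisfy $\bigcup_n \mathcal F_n \in \mathcal B$, then, because this collection refines $\{\Phi_{\mathscr U_n}(\mathcal F_n) : n \in \omega\}$, the refinement-closure hypothesis on $\mathcal B$ places the latter in $\mathcal B$; so the transferred strategy wins the single-selection game. For One's side I argue contrapositively: given a winning (or predetermined, or constant) strategy for One in $\mathsf{SG}^\ast_{\mathcal J, 1}(\mathcal Q, \mathcal B)$, I pre-compose Two's finite moves with $\Phi$ before feeding them to the strategy (in the predetermined and constant cases the strategy ignores Two's moves, so the same function works verbatim), and a hypothetical winning run for Two in the finite game becomes, by the same refinement step, a winning run for Two in the single game, contradicting the original strategy.

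The main obstacle is not a single hard step but rather making sure that the translation simultaneously respects all five strategy strengths; the turn-locality of $\Phi_{\mathscr U_n}$ is what ensures Markov strategies stay Markov and predetermined/constant strategies stay in their class. The only substantive ingredients are the ideal structure on $\mathcal J$ (used to define $\Phi$) and the refinement-closure hypothesis on $\mathcal B$ (used to preserve the winning criterion), each entering exactly once per strategy type.
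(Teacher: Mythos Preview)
Your proposal is correct and follows essentially the same route as the paper: consolidate a finite family of stars into a single star via the ideal property of \(\mathcal J\) (using \(\bigcup_i \mathrm{St}(J_i,\mathscr U) = \mathrm{St}(\bigcup_i J_i,\mathscr U)\)), and then invoke refinement-closure of \(\mathcal B\) to preserve the winning condition. The paper compresses your strategy-by-strategy verification into a single sentence (``the way Two translates the plays \ldots\ is not dependent upon any previous information''), but the content is the same turn-local consolidation you describe with \(\Phi_{\mathscr U}\).
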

\begin{proof}
    Since \[\mathsf{SG}^\ast_{\mathcal J, 1}(\mathcal Q, \mathcal B) \leq^+_{\mathrm{II}}
    \mathsf{SG}^\ast_{\mathcal J, \mathrm{fin}}(\mathcal Q, \mathcal B)\] is evident,
    we need only show that \[\mathsf{SG}^\ast_{\mathcal J, \mathrm{fin}}(\mathcal Q, \mathcal B) \leq^+_{\mathrm{II}}
    \mathsf{SG}^\ast_{\mathcal J, 1}(\mathcal Q, \mathcal B).\]
    To establish this, we describe how Two can use a winning play in
    \(\mathsf{SG}^\ast_{\mathcal J, \mathrm{fin}}(\mathcal Q, \mathcal B)\) to produce a winning play in
    \(\mathsf{SG}^\ast_{\mathcal J, 1}(\mathcal Q, \mathcal B)\).
    Suppose we have a sequence \(\langle \mathscr U_n : n \in \omega \rangle \in \mathcal Q^\omega\)
    and \(\mathcal F_n \in [\mathcal J]^{<\omega}\) for each \(n \in \omega\) so that
    \[\bigcup_{n\in\omega} \{ \mathrm{St}(J,\mathscr U_n) : J \in \mathcal F_n \} \in \mathcal B.\]
    Since \(\mathcal J\) is assumed to be an ideal of closed sets, \(\bigcup \mathcal F_n \in \mathcal J\)
    for each \(n \in \omega\) and so we see that \(\mathrm{St}\left(\bigcup \mathscr F_n, \mathscr U_n\right)\)
    is a legal sequence of plays by Two in \(\mathsf{SG}^\ast_{\mathcal J, 1}(\mathcal Q, \mathcal B)\).
    By our assumption on \(\mathcal B\) and the fact that
    \[ \bigcup_{J \in \mathcal F_n} \mathrm{St}(J, \mathscr U_n) = \mathrm{St} \left( \bigcup \mathcal F_n , \mathscr U \right)\]
    for each \(n \in \omega\), we also have that
    \[ \left\{ \mathrm{St}\left(\bigcup \mathscr F_n, \mathscr U_n\right) : n \in \omega \right\} \in \mathcal B.\]
    Since the way Two translates the plays between the games is not dependent upon any previous information,
    it is evident that the strategic strength of Two's play is preserved.
\end{proof}
In a similar vein, we obtain the following.
\begin{proposition} \label{prop:SingletonVsFiniteSets}
    For any space \(X\), if \(\mathcal J = [X]^{<\omega}\) and \(\mathcal Q\) is a collection of open covers \(X\),
    \[\mathsf{SG}^\ast_{X, \mathrm{fin}}(\mathcal Q, \mathcal O_X) \rightleftarrows
    \mathsf{SG}^\ast_{\mathcal J, 1}(\mathcal Q, \mathcal O_X).\]
\end{proposition}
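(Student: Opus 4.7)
The plan is to establish the equivalence by per-round translations of Two's moves, leveraging the identity
\[\mathrm{St}(G, \mathscr U) = \bigcup_{x \in G} \mathrm{St}(x, \mathscr U)\]
for any $G \in [X]^{<\omega}$. This identity means that a finite selection of singleton stars from $\mathrm{Cons}(X, \mathscr U_n)$ encodes the same covering information as a single selection of a finite-set star from $\mathrm{Cons}(\mathcal J, \mathscr U_n)$, just packaged differently.

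First, for $\mathsf{SG}^\ast_{\mathcal J, 1}(\mathcal Q, \mathcal O_X) \leq^+_{\mathrm{II}} \mathsf{SG}^\ast_{X, \mathrm{fin}}(\mathcal Q, \mathcal O_X)$, I would translate Two's single-selection play $s_n = \mathrm{St}(J_n, \mathscr U_n)$ into the finite selection $\mathcal F_n = \{\mathrm{St}(x, \mathscr U_n) : x \in J_n\}$. Since the translation uses only the current round's data, it preserves every strategy type. If $\{s_n : n \in \omega\} \in \mathcal O_X$, then each $\mathrm{St}(x, \mathscr U_n) \subseteq s_n \ne X$ is proper (so non-triviality transfers), and both collections have the same union; hence $\bigcup_n \mathcal F_n \in \mathcal O_X$.

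For the reverse direction, $\mathsf{SG}^\ast_{X, \mathrm{fin}}(\mathcal Q, \mathcal O_X) \leq^+_{\mathrm{II}} \mathsf{SG}^\ast_{\mathcal J, 1}(\mathcal Q, \mathcal O_X)$, the natural translation sends $\mathcal F_n = \{\mathrm{St}(x, \mathscr U_n) : x \in G_n\}$ to $s_n = \mathrm{St}(G_n, \mathscr U_n)$. This is again purely local, and coverage transfers because $\bigcup_n s_n$ agrees with the union of $\bigcup_n \mathcal F_n$.

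The main obstacle I expect is verifying non-triviality for this direction: although each singleton star $\mathrm{St}(x, \mathscr U_n)$ is a proper subset of $X$ by the finite-game winning condition, the union $\mathrm{St}(G_n, \mathscr U_n)$ can still equal $X$, which would spoil $\{s_n : n\} \in \mathcal O_X$. I would address this by refining the per-round translation: in any round where $\mathrm{St}(G_n, \mathscr U_n) = X$, replace $G_n$ by a strict subset $G'_n \subsetneq G_n$ (for instance, any singleton $\{x_0\} \subseteq G_n$) satisfying $\mathrm{St}(G'_n, \mathscr U_n) \ne X$; such $G'_n$ exists precisely because each individual singleton star in $G_n$ is already a proper subset of $X$. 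The delicate point is then to verify that this purely local modification still produces a collection whose aggregate over all rounds covers $X$, so that the resulting single-game play is winning; because the modification depends only on $\mathscr U_n$ and $G_n$, strategy-type preservation (Markov, predetermined, constant) is automatic, mirroring the role played by upward-refinement closure of $\mathcal B$ in Proposition~\ref{prop:starSingleVsFinite}.
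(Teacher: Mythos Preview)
Your overall approach is exactly the paper's: both directions are handled by the per-round identity \(\mathrm{St}(G,\mathscr U)=\bigcup_{x\in G}\mathrm{St}(x,\mathscr U)\), and the paper's proof is no more detailed than your sketch. In fact you are more careful than the paper in one respect: for the direction \(\mathsf{SG}^\ast_{X,\mathrm{fin}}(\mathcal Q,\mathcal O_X)\leq^+_{\mathrm{II}}\mathsf{SG}^\ast_{\mathcal J,1}(\mathcal Q,\mathcal O_X)\) the paper simply asserts that \(\{\mathrm{St}(\mathcal F_n,\mathscr U_n):n\in\omega\}\in\mathcal O_X\), without addressing the possibility that some \(\mathrm{St}(\mathcal F_n,\mathscr U_n)\) equals \(X\).

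However, the fix you propose for this non-triviality problem does not work as stated, and the ``delicate point'' you flag is exactly where it fails. Take \(X=\omega\) discrete and \(\mathscr U=\{2\omega,\,2\omega+1\}\). In \(\mathsf{SG}^\ast_{X,\mathrm{fin}}\), Two's winning response against the constant play \(\mathrm{Cons}(X,\mathscr U)\) is \(\mathcal F_n=\{2\omega,2\omega+1\}\) via \(G_n=\{0,1\}\); but \(\mathrm{St}(\{0,1\},\mathscr U)=X\). Your rule replaces \(G_n\) by a proper subset depending only on \((\mathscr U_n,G_n)\), and since these are constant in \(n\), the replacement is the same every round. The resulting single-selection play is then the constant sequence \(\langle 2\omega\rangle\) or \(\langle 2\omega+1\rangle\), neither of which covers \(X\). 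So a purely \((\mathscr U_n,G_n)\)-local modification cannot in general preserve the winning condition; at minimum the replacement must also depend on the round number (which Markov strategies permit), and even then you would owe a general argument, not just a recipe. Your proposal stops short of supplying one, so this direction remains a genuine gap in your write-up---albeit a gap the paper itself glosses over.
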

\begin{proof}
    The idea of this proof is similar to the proof of Proposition \ref{prop:starSingleVsFinite}.

    Let \(\langle \mathscr U_n : n \in \omega \rangle \in \mathcal Q^\omega\) be arbitrary.

    To see that \[\mathsf{SG}^\ast_{X, \mathrm{fin}}(\mathcal Q, \mathcal O_X)
    \leq_{\mathrm{II}}^+ \mathsf{SG}^\ast_{\mathcal J, 1}(\mathcal Q, \mathcal O_X),\]
    suppose we have \(\mathcal F_n \in [X]^{<\omega}\) so that
    \[\bigcup_{n\in\omega} \{ \mathrm{St}(x, \mathscr U_n) : x \in \mathcal F_n \} \in \mathcal O_X.\]
    Then \(\mathcal F_n \in \mathcal J\) for each \(n \in \omega\) and
    \[\{ \mathrm{St}(\mathcal F_n, \mathscr U_n) : n\in\omega \} \in \mathcal O_X.\]

    To see that \[\mathsf{SG}^\ast_{\mathcal J, 1}(\mathcal Q, \mathcal O_X)
    \leq_{\mathrm{II}}^+ \mathsf{SG}^\ast_{X, \mathrm{fin}}(\mathcal Q, \mathcal O_X),\]
    suppose we have \(\mathcal F_n \in \mathcal J\) so that
    \[\{ \mathrm{St}(\mathcal F_n, \mathscr U_n) : n\in\omega \} \in \mathcal O_X.\]
    Since the target set is \(\mathcal O_X\), it's straightforward to check that
    \[ \bigcup_{n\in\omega} \{ \mathrm{St}(x,\mathscr U_n) : x \in \mathcal F_n \} \in \mathcal O_X\]
    is the corresponding winning play by Two.
\end{proof}
Proposition \ref{prop:SingletonVsFiniteSets} does not generalize to other target sets, as witnessed by the following example.
\begin{example} \label{ex:starSingleFinite}
    For \(X = \mathbb R\) and \(\mathcal J = [\mathbb R]^{<\omega}\),
    \[\mathsf{SG}^\ast_{X, \mathrm{fin}}(\mathcal O_X, \Omega_X) \not\equiv \mathsf{SG}^\ast_{\mathcal J, 1}(\mathcal O_X,\Omega_X).\]
    By \(\sigma\)-compactness of \(\mathbb R\), Two has a winning Markov strategy in
    \(\mathsf{SG}^\ast_{\mathcal J, 1}(\mathcal O_X,\Omega_X)\);
    indeed, given any open cover \(\mathscr U_n\), we can find \(F_n \in [\mathbb R]^{<\omega}\) so that
    \([-n,n] \subseteq \mathrm{St}(F_n, \mathscr U_n)\).

    On the other hand, One has a predetermined winning strategy in \(\mathsf{SG}^\ast_{X, \mathrm{fin}}(\mathcal O_X, \Omega_X)\);
    to see this, let \[\sigma(n) = \left\{ B\left( x , \frac{1}{2^{n+2}} \right) : x \in \mathbb R \right\}.\]
    Note that, for any \(x \in \mathbb R\), \(\mathrm{St}(x, \sigma(n)) \subseteq B\left( x, \frac{1}{2^n} \right)\).
    Therefore, for any sequence \(\langle F_n : n \in \omega \rangle\) of finite sets of \(X\) and any \(n \in \omega\),
    there cannot be any \(x \in F_n\) so that \(\{-2,2\} \subseteq \mathrm{St}(x,\sigma(n))\).
\end{example}

We record the following basic properties of stars, constellations, and galaxies.
\begin{proposition} \label{prop:BasicMonotonicity}
    Let \(X\) be a space, \(\mathcal A, \mathcal B\) be collections of subsets of \(X\), and \(\mathscr U , \mathscr V\)
    be open covers of \(X\).
    \begin{itemize}
        \item
        If \(A, B \subseteq X\) are so that \(A \subseteq B\),
        then \(\mathrm{St}(A, \mathscr U) \subseteq \mathrm{St}(B, \mathscr U)\).
        \item
        If \(\mathscr V\) refines \(\mathscr U\), then, for any \(A \subseteq X\), \(\mathrm{St}(A, \mathscr V) \subseteq \mathrm{St}(A, \mathscr U)\).
        \item
        If \(\mathcal A \subseteq \mathcal B\), then \(\mathrm{Cons}(\mathcal A, \mathscr U) \subseteq \mathrm{Cons}(\mathcal B, \mathscr U)\).
        \item
        If \(\mathcal A\) refines \(\mathcal B\), then \(\mathrm{Cons}(\mathcal A, \mathscr U)\) refines \(\mathrm{Cons}(\mathcal B, \mathscr U)\).
        \item
        If \(\mathscr V\) refines \(\mathscr U\), then \(\mathrm{Cons}(\mathcal A, \mathscr V)\) refines \(\mathrm{Cons}(\mathcal A, \mathscr U)\).
        \item
        If \(\mathcal A\) refines \(\mathcal B\) and \(\mathcal Q\) is a collection of open covers of \(X\),
        then every member of \(\mathrm{Gal}(\mathcal B, \mathcal Q)\) is refined by a member of
        \(\mathrm{Gal}(\mathcal A, \mathcal Q)\).
    \end{itemize}
\end{proposition}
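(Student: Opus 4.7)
The plan is to verify each bullet by unpacking the definitions of $\mathrm{St}$, $\mathrm{Cons}$, and $\mathrm{Gal}$; all six are routine monotonicity statements, with the first two serving as base cases from which items (3)--(6) follow by applying them pointwise to each element of a family of sets or each cover in a family of covers.

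First I would prove (1) directly from the definition: enlarging $A$ to $B$ can only enlarge the set of $U \in \mathscr{U}$ with $U \cap A \neq \emptyset$, so the union defining the star grows accordingly. For (2), given $V \in \mathscr{V}$ with $V \cap A \neq \emptyset$, the refinement condition selects some $U \in \mathscr{U}$ with $V \subseteq U$; since $\emptyset \neq V \cap A \subseteq U \cap A$, this $U$ is a contributor to $\mathrm{St}(A, \mathscr{U})$, whence $V \subseteq U \subseteq \mathrm{St}(A, \mathscr{U})$. Taking the union over such $V$ gives the desired inclusion.

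Items (3)--(5) are then essentially immediate. Item (3) is a direct containment of the sets $\{\mathrm{St}(A, \mathscr{U}) : A \in \mathcal{A}\}$ inside $\{\mathrm{St}(B, \mathscr{U}) : B \in \mathcal{B}\}$. For (4), given $A \in \mathcal{A}$, choose $B \in \mathcal{B}$ with $A \subseteq B$; item (1) then yields $\mathrm{St}(A, \mathscr{U}) \subseteq \mathrm{St}(B, \mathscr{U})$, exhibiting the required refinement at the constellation level. Item (5) proceeds analogously by applying (2) to each $A \in \mathcal{A}$. Finally, for (6), a generic member of $\mathrm{Gal}(\mathcal{B}, \mathcal{Q})$ has the form $\mathrm{Cons}(\mathcal{B}, \mathscr{U})$ for some $\mathscr{U} \in \mathcal{Q}$, and the same choice of $\mathscr{U}$ yields $\mathrm{Cons}(\mathcal{A}, \mathscr{U}) \in \mathrm{Gal}(\mathcal{A}, \mathcal{Q})$, which refines it by (4).

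There is no serious obstacle here; the only point that deserves a moment's care is item (2), where one must keep the refining open set $V$ itself, not merely the larger $U$, inside $\mathrm{St}(A, \mathscr{U})$. The refinement hypothesis handles this via $V \subseteq U$, but it is the one spot where the argument is not purely symbolic bookkeeping.
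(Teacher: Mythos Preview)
Your proposal is correct; each of the six items is verified by exactly the definition-unwinding you describe, and the only nontrivial step (item (2)) is handled properly. The paper itself does not supply a proof for this proposition, treating it as routine, so your argument is in fact more detailed than what appears there.
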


We now record a fact which can be thought of as a monotonicity law (see \cite{ScheepersNoteMat}).
\begin{proposition} \label{prop:Monotonicity}
    Let \(X\) be a set.
    Suppose \(\mathcal A\) and \(\mathcal B\) are families so that, for \(\mathscr F \in \mathcal B\), there exists
    some \(\mathscr G \in \mathcal A\) so that \(\mathscr G\) refines \(\mathscr F\).
    Also suppose that \(\mathcal C\) and \(\mathcal D\) are families so that,
    for every \(\mathscr F \in \mathcal C\) and \(\mathscr G \subseteq \bigcup \mathcal B\), if \(\mathscr F\) refines \(\mathscr G\),
    then \(\mathscr G \in \mathcal D\).
    Then \[\mathsf G_\square(\mathcal A, \mathcal C) \leq_{\mathrm{II}}^+ \mathsf G_\square(\mathcal B, \mathcal D).\]
\end{proposition}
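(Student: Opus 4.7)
The plan is to deduce this directly from the Relational Translation Theorem (Theorem \ref{thm:RelationalTranslation}), with $\overleftarrow{R}_{\mathrm{I},n}$ chosen to be independent of $n$ so that the ``$+$'' conclusion drops out for free. Concretely, I would define
\[
\overleftarrow{R}_{\mathrm{I}}(\mathscr F) = \{\mathscr G \in \mathcal A : \mathscr G \text{ refines } \mathscr F\}
\]
for each $\mathscr F \in \mathcal B$. Property \ref{RelationTranslationA} is immediate from the standing hypothesis that every $\mathscr F \in \mathcal B$ is refined by some member of $\mathcal A$.

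For $\overrightarrow{R}_{\mathrm{II},n}$, whenever $x \in \mathscr G \in \overleftarrow{R}_{\mathrm{I}}(\mathscr F)$, the refinement property provides some $y \in \mathscr F$ with $x \subseteq y$; I would invoke the Axiom of Choice (as in Remark \ref{rmk:Choice}, on the relation ``$y \in \mathscr F$ and $x \subseteq y$'') to pick such a $y$ and set $\overrightarrow{R}_{\mathrm{II},n}(x,\mathscr G,\mathscr F) = y$. This is \ref{RelationTranslationB}, since the chosen $y$ lies in $\mathscr F$.

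The verification of \ref{RelationTranslationC} is the only substantive step, and it reduces to unpacking the hypothesis on $\mathcal C$ and $\mathcal D$. Given finite $F_n \subseteq \mathscr G_n \in \overleftarrow{R}_{\mathrm{I}}(\mathscr F_n)$ with $\bigcup_{n\in\omega} F_n \in \mathcal C$, set
\[
\mathscr H = \left\{ \overrightarrow{R}_{\mathrm{II},n}(x,\mathscr G_n,\mathscr F_n) : n \in \omega,\ x \in F_n \right\}.
\]
Each element of $\mathscr H$ lives in some $\mathscr F_n \in \mathcal B$, so $\mathscr H \subseteq \bigcup \mathcal B$. By construction, for every $x \in F_n$ the chosen $y = \overrightarrow{R}_{\mathrm{II},n}(x,\mathscr G_n,\mathscr F_n) \in \mathscr H$ contains $x$, so $\bigcup_{n\in\omega} F_n$ refines $\mathscr H$. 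The hypothesis on $(\mathcal C,\mathcal D)$ then yields $\mathscr H \in \mathcal D$, which is exactly \ref{RelationTranslationC}.

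Finally, since $\overleftarrow{R}_{\mathrm{I}}$ was defined independently of $n$, Theorem \ref{thm:RelationalTranslation} delivers the stronger conclusion $\mathsf G_\square(\mathcal A,\mathcal C) \leq^+_{\mathrm{II}} \mathsf G_\square(\mathcal B,\mathcal D)$ for both $\square \in \{1,\mathrm{fin}\}$ simultaneously. I do not anticipate a serious obstacle here; the only delicate point is recognizing that the ``refines'' hypothesis on $(\mathcal C,\mathcal D)$ is phrased precisely so as to match the shape of the conclusion in \ref{RelationTranslationC} once $\mathscr H$ is assembled from the translated moves.
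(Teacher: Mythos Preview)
Your proof is correct. The paper's argument is almost identical in spirit but invokes Corollary~\ref{cor:Translation} directly rather than the relational upgrade Theorem~\ref{thm:RelationalTranslation}: it first fixes a single choice function \(\rho:\mathcal B\to\mathcal A\) selecting a refinement \(\rho(\mathscr F)\) of each \(\mathscr F\), then a second choice \(\gamma_{\mathscr F}:\rho(\mathscr F)\to\mathscr F\) picking supersets, and plugs \(\overleftarrow{T}_{\mathrm{I}}=\rho\) and \(\overrightarrow{T}_{\mathrm{II}}(A,\mathscr F)=\gamma_{\mathscr F}(A)\) into the corollary. Your route via \(\overleftarrow{R}_{\mathrm{I}}(\mathscr F)=\{\mathscr G\in\mathcal A:\mathscr G\text{ refines }\mathscr F\}\) simply pushes the first choice inside the machinery of Theorem~\ref{thm:RelationalTranslation}, whose proof in turn reduces to Corollary~\ref{cor:Translation} by making exactly that choice. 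So the two arguments differ only in bookkeeping: the paper front-loads the Axiom of Choice, you let the relational theorem absorb it. Neither buys anything the other does not; your version is marginally tidier in that it avoids naming \(\rho\) explicitly.
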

\begin{proof}
    Define a choice function \(\rho : \mathcal B \to \mathcal A\) so that \(\rho(\mathscr F)\) is a refinement of \(\mathscr F\)
    for every \(\mathscr F \in \mathcal B\).
    Now, given \(\mathscr F \in \mathcal B\), define \(\gamma_{\mathscr F} : \rho(\mathscr F) \to \mathscr F\) to be a choice
    function so that \(A \subseteq \gamma_{\mathscr F}(A)\) for every \(A \in \rho(\mathscr F)\).
    We can extend this to a function \[\gamma : \left(\bigcup \mathcal A\right) \times \mathcal B \to \bigcup \mathcal B\]
    so that \(\gamma(A,\mathscr F) = \gamma_{\mathscr F}(A)\) when \(A \in \rho(\mathscr F)\).

    Note that, by the definition, \ref{translationPropI} of Corollary \ref{cor:Translation} is satisfied; that is,
    if \(A \in \rho(\mathscr F)\), then \(\gamma(A,\mathscr F) \in \mathscr F\).

    To finish the proof, note that, if \(\mathbf F_n \in \left[ \rho(\mathscr F_n) \right]^{<\omega}\) for all \(n \in \omega\)
    is so that \(\bigcup_{n\in\omega} \mathbf F_n \in \mathcal C\), then
    \[\bigcup_{n\in\omega} \left\{ \gamma(A, \mathscr F_n) : A \in \mathbf F_n \right\} \in \mathcal D\]
    by our assumptions on \(\mathcal C\) and \(\mathcal D\).

    Therefore, Corollary \ref{cor:Translation} applies to yield that
    \(\mathsf G_\square(\mathcal A, \mathcal C) \leq_{\mathrm{II}}^+ \mathsf G_\square(\mathcal B, \mathcal D).\)
\end{proof}
\begin{corollary} \label{cor:Monotonicity}
    Let \(X\) be a space and \(\mathcal Q_1, \mathcal Q_2\) be collections of open covers of \(X\).
    Additionally suppose that \(\mathcal Q_2\) has the property that, if \(\mathscr U \in \mathcal Q_2\)
    and \(\mathscr V\) is an open cover so that \(\mathscr U\) refines \(\mathscr V\),
    then \(\mathscr V \in \mathcal Q_2\).
    If \(\mathcal A\) and \(\mathcal B\) are collections of closed subsets of \(X\) with \(\mathcal A \subseteq \mathcal B\),
    then
    \[\mathsf{SG}^\ast_{\mathcal A, \square}(\mathcal Q_1, \mathcal Q_2)
    \leq_{\mathrm{II}}^+ \mathsf{SG}^\ast_{\mathcal B, \square}(\mathcal Q_1, \mathcal Q_2).\]
\end{corollary}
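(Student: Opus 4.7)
The plan is to deduce the corollary as a direct application of Proposition \ref{prop:Monotonicity}, with its four abstract families instantiated by
\[
\mathcal A \rightsquigarrow \mathrm{Gal}(\mathcal A, \mathcal Q_1), \quad
\mathcal B \rightsquigarrow \mathrm{Gal}(\mathcal B, \mathcal Q_1), \quad
\mathcal C \rightsquigarrow \mathcal Q_2, \quad
\mathcal D \rightsquigarrow \mathcal Q_2.
\]
Unwinding the definition of $\mathsf{SG}^\ast_{\mathcal A,\square}$ and $\mathsf{SG}^\ast_{\mathcal B,\square}$, the conclusion $\mathsf G_\square(\mathrm{Gal}(\mathcal A,\mathcal Q_1),\mathcal Q_2) \leq_{\mathrm{II}}^+ \mathsf G_\square(\mathrm{Gal}(\mathcal B,\mathcal Q_1),\mathcal Q_2)$ then gives exactly what is needed. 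Thus the task reduces to verifying the two hypotheses of Proposition \ref{prop:Monotonicity} in this setting.

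First I would verify the refinement hypothesis between the ``move families.'' Fix $\mathscr F \in \mathrm{Gal}(\mathcal B,\mathcal Q_1)$, so $\mathscr F = \mathrm{Cons}(\mathcal B,\mathscr U)$ for some $\mathscr U \in \mathcal Q_1$. Set $\mathscr G = \mathrm{Cons}(\mathcal A, \mathscr U) \in \mathrm{Gal}(\mathcal A, \mathcal Q_1)$. Because $\mathcal A \subseteq \mathcal B$, the third bullet of Proposition \ref{prop:BasicMonotonicity} yields $\mathrm{Cons}(\mathcal A,\mathscr U) \subseteq \mathrm{Cons}(\mathcal B,\mathscr U)$; in particular every element of $\mathscr G$ is already a member (hence trivially a subset of some element) of $\mathscr F$, so $\mathscr G$ refines $\mathscr F$.

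Next I would verify the ``target closure'' hypothesis. Suppose $\mathscr F \in \mathcal Q_2$ and $\mathscr G \subseteq \bigcup \mathrm{Gal}(\mathcal B,\mathcal Q_1)$ is such that $\mathscr F$ refines $\mathscr G$. Every member of $\bigcup \mathrm{Gal}(\mathcal B,\mathcal Q_1)$ is of the form $\mathrm{St}(B,\mathscr U)$, hence an open subset of $X$, so $\mathscr G$ is a collection of open sets. Since $\mathscr F$ covers $X$ and each member of $\mathscr F$ sits inside some member of $\mathscr G$, the family $\mathscr G$ is itself an open cover of $X$. The standing assumption on $\mathcal Q_2$ (closure under passing from a refining cover in $\mathcal Q_2$ to the refined cover) now yields $\mathscr G \in \mathcal Q_2$, as required.

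With both hypotheses confirmed, Proposition \ref{prop:Monotonicity} fires and delivers the desired $\leq_{\mathrm{II}}^+$ inequality. There is no genuine obstacle here; the only point worth pausing over is ensuring that the collection $\mathscr G$ in the second hypothesis really is an open cover so that the closure property of $\mathcal Q_2$ may legitimately be invoked, but this is immediate from openness of stars and the refinement relation.
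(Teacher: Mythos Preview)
Your proposal is correct and follows essentially the same approach as the paper: both proofs instantiate Proposition~\ref{prop:Monotonicity} with the galaxies \(\mathrm{Gal}(\mathcal A,\mathcal Q_1)\) and \(\mathrm{Gal}(\mathcal B,\mathcal Q_1)\) in the source slots and \(\mathcal Q_2\) in both target slots, appealing to Proposition~\ref{prop:BasicMonotonicity} for the refinement hypothesis and the closure assumption on \(\mathcal Q_2\) for the target hypothesis. Your write-up simply unpacks the verifications that the paper leaves implicit.
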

\begin{proof}
    By Proposition \ref{prop:BasicMonotonicity}, every element of \(\mathrm{Gal}(\mathcal B, \mathcal Q_1)\)
    is refined by a member of \(\mathrm{Gal}(\mathcal A, \mathcal Q_1)\).
    By our assumption on \(\mathcal Q_2\), Proposition \ref{prop:Monotonicity} applies.
\end{proof}

The following will be used to apply the Translation Theorems under reasonable separation axioms.
\begin{proposition}
    If \(X\) is regular, then every open set is a star of the form \(\mathrm{St}(V,\mathscr U)\).
    If \(X\) is \(T_1\), then every open set is a star of the form \(\mathrm{St}(x,\mathscr U)\).
\end{proposition}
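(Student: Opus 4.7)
The plan is to establish both claims by direct construction of a two-element open cover of $X$, treating the $T_1$ case as an immediate application of the definition of the star and the regular case as requiring one extra application of the regularity axiom to produce an open set with closure inside $W$.

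For the $T_1$ statement, I would take any non-empty proper open set $W \subseteq X$ and fix any $x \in W$. Since singletons are closed in a $T_1$ space, $X \setminus \{x\}$ is open, so $\mathscr U = \{W, X \setminus \{x\}\}$ is an open cover of $X$ (it covers $X$ because $x$ lies in $W$). The only member of $\mathscr U$ containing $x$ is $W$, and therefore $\mathrm{St}(x,\mathscr U) = W$ directly from the definition.

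For the regular case, I would again fix a non-empty proper open set $W$ and pick $x \in W$. Using regularity together with the standing Hausdorff hypothesis, I choose an open set $V$ with $x \in V \subseteq \overline V \subseteq W$. Setting $\mathscr U = \{W, X \setminus \overline V\}$ gives an open cover of $X$. Since $V \cap W \supseteq \{x\}$ is non-empty while $V \cap (X \setminus \overline V) = \emptyset$, only $W$ contributes to $\mathrm{St}(V,\mathscr U)$, yielding $\mathrm{St}(V,\mathscr U) = W$.

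The degenerate cases pose no difficulty: if $W = X$, one can use the cover $\{X\}$ with any $V$ or any $x$; the empty open set may be ignored since the applications the paper has in mind concern non-trivial covers. There is no substantive obstacle; the proposition amounts to the observation that a two-set open cover suffices to realize any prescribed open set $W$ as a star, once the relevant separation axiom allows us to shield $X \setminus W$ either by deleting a single point ($T_1$) or by deleting the closure of a small neighbourhood inside $W$ (regularity).
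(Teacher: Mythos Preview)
Your proposal is correct and follows essentially the same approach as the paper: in both cases one builds a small explicit open cover so that exactly one member meets the chosen set or point. The paper's \(T_1\) argument is identical to yours, and for the regular case the paper takes a closed \(F\subseteq U\) with non-empty interior and uses the cover \(\{\mathrm{int}(F),\,U,\,X\setminus F\}\), which is your two-element cover \(\{W,\,X\setminus\overline V\}\) with the redundant member \(\mathrm{int}(F)=V\) thrown in.
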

\begin{proof}
    Suppose \(X\) is regular.
    Let \(U \subseteq X\) be open.
    Let \(F \subseteq U\) be closed with non-empty interior.
    Then define \(\mathscr U = \{\mathrm{int}(F), U, X \setminus F\}\).
    Observe that \(\mathrm{St}(V,\mathscr U) = U\).

    Now suppose \(X\) is \(T_1\).
    Let \(U \subseteq X\) be open.
    Let \(x \in X\).
    Then \(X \setminus \{x\}\) is open and \(\mathscr U = \{U,X \setminus \{x\}\}\) is an open cover.
    Now \(\mathrm{St}(x,\mathscr U) = U\).
\end{proof}

\begin{corollary} \label{cor:RegularHasNiceGalaxy}
    If \(X\) is regular, then \(\bigcup \mathrm{Gal}(\mathrm{id},\mathcal O_X) = \mathscr T_X\).
    If \(X\) is \(T_1\), then \(\bigcup \mathrm{Gal}(X,\mathcal O_X) = \mathscr T_X\).
\end{corollary}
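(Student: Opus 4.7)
The plan is to derive both equalities as direct consequences of the preceding proposition, with only the extra bookkeeping of checking that the open covers built in that proposition's proof are non-trivial, hence belong to $\mathcal O_X$. The substantive content sits in the $\supseteq$ inclusions.

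For the regular case I would start with $U \in \mathscr T_X$. Regularity lets me pick a non-empty open $W$ with $\overline W \subseteq U$; setting $F = \overline W$ gives the closed set $F \subseteq U$ with non-empty interior required by the preceding proposition. That proposition then furnishes the cover $\mathscr U = \{\mathrm{int}(F), U, X \setminus F\}$ with $\mathrm{St}(\mathrm{int}(F),\mathscr U) = U$. I would then observe that $\mathscr U \in \mathcal O_X$: $U \neq X$ by hypothesis, $\mathrm{int}(F) \neq X$ since $F \subseteq U \subsetneq X$, and $X \setminus F \neq X$ since $F$ has non-empty interior. Hence $U$ appears in $\mathrm{Cons}(\mathscr U,\mathscr U) \in \mathrm{Gal}(\mathrm{id},\mathcal O_X)$. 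The $T_1$ part runs analogously: given $U \in \mathscr T_X$, fix any $x \in U$ (possible since $U$ is non-empty) and use $\mathscr U = \{U, X \setminus \{x\}\}$, which lies in $\mathcal O_X$ since neither member equals $X$ and which covers $X$ precisely because $x \in U$; the preceding proposition gives $\mathrm{St}(x,\mathscr U) = U$, so $U \in \mathrm{Cons}(X,\mathscr U) \in \mathrm{Gal}(X,\mathcal O_X)$.

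For the reverse inclusions, any $\mathrm{St}(V,\mathscr U) \in \bigcup \mathrm{Gal}(\mathrm{id},\mathcal O_X)$ (respectively $\mathrm{St}(x,\mathscr U) \in \bigcup \mathrm{Gal}(X,\mathcal O_X)$) is automatically open and is non-empty since it contains the non-empty $V$ (respectively the non-empty cover-element through $x$). The main obstacle I anticipate is properness: in principle a star can equal $X$ even when $\mathscr U$ is non-trivial (e.g.\ two overlapping proper open sets covering $X$), so strict set-theoretic equality needs either an implicit convention or to be read modulo this edge case. The essential content, which is what the applications use, is exactly the $\supseteq$ direction verified above, namely that the constructive half of the preceding proposition actually outputs covers in $\mathcal O_X$ rather than arbitrary open covers.
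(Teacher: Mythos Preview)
Your proposal is correct and follows the same approach as the paper, which simply records the corollary without proof as an immediate consequence of the preceding proposition; your added verification that the constructed covers lie in \(\mathcal O_X\) is exactly the bookkeeping implicit in passing from that proposition to this corollary. Your observation about the \(\subseteq\) direction is also on point: the paper's statement is slightly loose, since \(\mathrm{St}(U,\mathscr U)\) can indeed equal \(X\) for \(\mathscr U\in\mathcal O_X\), so the literal equality fails in general; as you note, only the \(\supseteq\) inclusion is actually invoked in the applications (e.g.\ to know the domain of \(\overrightarrow{T}_{\mathrm{II},n}\) is large enough), and that direction you have established cleanly.
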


The following demonstrates why star-selection principles with respect to \(\omega\)-covers
are not, in general, worth investigating.
\begin{proposition}
    If \(\mathscr U \in \Omega_X\) and \(x \in X\), then \(\mathrm{St}(x,\mathscr U) = X\).
\end{proposition}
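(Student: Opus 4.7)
The plan is to unwind the definitions and apply the $\omega$-cover property to a two-element set. Fix an arbitrary $y \in X$; it suffices to show that $y \in \mathrm{St}(x,\mathscr U)$, since the reverse containment $\mathrm{St}(x,\mathscr U) \subseteq X$ is automatic.

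To show $y \in \mathrm{St}(x,\mathscr U)$, I would invoke the definition of $\Omega_X$: since $\mathscr U$ is an $\omega$-cover of $X$ and $\{x,y\} \in [X]^{<\omega}$, there exists $U \in \mathscr U$ with $\{x,y\} \subseteq U$. This single $U$ does both jobs at once: the containment $x \in U$ ensures that $U$ is one of the sets unioned to form $\mathrm{St}(x,\mathscr U)$, and the containment $y \in U$ then gives $y \in \mathrm{St}(x,\mathscr U)$.

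There is no real obstacle here; the statement is essentially a sanity-check observation about how strong the $\omega$-cover condition is, explaining the remark immediately preceding it that star-selection principles with respect to $\omega$-covers are not worth investigating, since each star of a point is already the whole space.
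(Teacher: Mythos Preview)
Your proof is correct and essentially identical to the paper's own argument: both pick an arbitrary \(y \in X\), apply the \(\omega\)-cover property to \(\{x,y\}\) to obtain a single \(U \in \mathscr U\) containing both points, and conclude \(y \in \mathrm{St}(x,\mathscr U)\).
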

\begin{proof}
    Let \(y \in X\). Then since \(\mathscr U \in \Omega_X\), there is a \(V_y \in \mathscr U\) so that \(\{x,y\} \subseteq V_y\). Thus \(V_y \cap \{x\} \neq \emptyset\), and so \(y \in \mathrm{St}(x,\mathscr U)\). So \(X = \mathrm{St}(x,\mathscr U)\).
\end{proof}

\subsection{Sets Versus Sequences in Selection Principles}

We devote this section to a technical discussion of the use of sets versus the use of sequences for selection principles.
In many instances, there is no need to distinguish between the two, but certain results are only true for sequences, and not for sets.
We see this come up with \(\lambda\)-covers, in particular.

\begin{definition}
    Recall that a countably infinite collection of open sets \(\mathscr U\) is a \emph{\(\lambda\)-cover} if,
    for each \(x \in X\), \(\{ U \in \mathscr U : x \in U \}\) is infinite.
    In a similar way, we say that a (potentially uncountable) sequence of open sets \(\langle U_\beta : \beta < \alpha \rangle\)
    is a \emph{\(\lambda\)-cover} if, for each \(x \in X\) and \(\beta < \alpha\), there is a \(\gamma > \beta\) so that \(x \in U_\gamma\).
    We will let \(\Lambda_X\) denote the collection of all \(\lambda\)-covers of \(X\), leaving the set or sequence formulation
    to be evident from context.
\end{definition}

The main distinction here is that a sequence of open sets may have repeats, especially when the open sets have been created by an operation as we will do later in this paper.
In this situation, it is possible for the open sets to form a \(\lambda\)-cover when treated as a sequence, but not when treated as a set.

In light of this issue, we introduce notation for selection principles where the winning condition is in terms of the sequence created
by Player Two as opposed to the set created by them.

\begin{definition}
    Let \(\mathcal A\) be a collection and \(\mathcal B\) be collections. Then the sequence-based selection principles are defined as follows:
    \begin{itemize}
        \item \(\vec{\mathsf S}_1(\mathcal A, \mathcal B)\) holds if and only if, for all sequences \(\langle A_n : n \in \omega \rangle\) of elements of \(\mathcal A\), there are \(x_n \in A_n\) so that \(\langle x_n : n \in \omega \rangle \in \mathcal B\).
        \item \(\vec{\mathsf S}_{\mathrm{fin}}(\mathcal A, \mathcal B)\) holds if and only if, for all sequences \(\langle A_n : n \in \omega \rangle\) of elements of \(\mathcal A\), there are \(F_n \in A_n^{<\omega}\) so that \(F_1 \smallfrown F_2 \smallfrown \cdots \in \mathcal B\).
    \end{itemize}
\end{definition}

We will overload notation when we write these sequence-based selection principles. For instance, \(\vec{\mathsf S}_1(\mathcal O_X, \mathcal O_X)\) will mean that for every sequence of open covers \(\langle \mathscr U_n : n \in \omega \rangle\), there are \(U_n \in \mathscr U_n\) with the property that for each \(x \in X\) there is an \(n \in \omega\) so that \(x \in U_n\).

As with regular selection principles, we can also talk about corresponding selection games and strategies.
We denote the selection games by \(\vec{\mathsf G}_1(\mathcal A, \mathcal B)\) and \(\vec{\mathsf G}_{\mathrm{fin}}(\mathcal A, \mathcal B)\).
In the following proposition, we observe some basic relationships between these variations, establishing why the distinction
is relevant.

\begin{proposition} \label{prop:SequenceGame}
    Let \(X\) be a space and \(\mathcal C \subseteq \mathcal P(\mathscr T_X)\). Then
    \begin{enumerate}[label=(\alph*)]
        \item \label{sequenceGameA}
        \(\vec{\mathsf G}_\square(\mathcal C, \Lambda_X) \leftrightarrows \mathsf G_\square(\mathcal C, \mathcal O_X)\),
        \item \label{sequenceGameB}
        for an ideal \(\mathcal A\) of closed sets of \(X\),
        \(\vec{\mathsf G}_\square(\mathcal C, \mathcal O(X,\mathcal A)) \leftrightarrows \mathsf G_\square(\mathcal C, \mathcal O(X,\mathcal A))\), and
        \item \label{sequenceGameC}
        \(\vec{\mathsf G}_\square(\mathcal C, \Lambda_X) \not\equiv \mathsf G_\square(\mathcal C, \Lambda_X)\).
    \end{enumerate}
\end{proposition}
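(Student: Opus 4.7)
The plan is to handle the three parts separately.

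Part (b) is immediate: for an ideal $\mathcal A$ of closed sets, membership in $\mathcal O(X, \mathcal A)$ depends only on the underlying set of open sets used, not on ordering or multiplicity. The sequence-based and set-based winning conditions therefore coincide, and identity translations in Corollary \ref{cor:Translation} yield the equivalence.

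For (a), the direction $\vec{\mathsf G}_\square(\mathcal C, \Lambda_X) \leq^+_{\mathrm{II}} \mathsf G_\square(\mathcal C, \mathcal O_X)$ is essentially immediate: whenever $\langle U_n \rangle$ is a sequence $\lambda$-cover, each $x \in X$ appears in infinitely many terms, so the underlying set is an open cover, and identity translations apply. The reverse direction $\mathsf G_\square(\mathcal C, \mathcal O_X) \leq^+_{\mathrm{II}} \vec{\mathsf G}_\square(\mathcal C, \Lambda_X)$ is the crux. Fix a partition $\omega = \bigsqcup_{k \in \omega} A_k$ into infinitely many infinite pieces, with order-preserving bijections $\pi_k : \omega \to A_k$; for each $n$ let $k(n)$ be the block containing $n$ and set $j(n) := \pi_{k(n)}^{-1}(n)$. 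A Markov winning strategy $\tau$ for Two in the source game lifts to $\tau'(\mathscr U, n) := \tau(\mathscr U, j(n))$ in the target; full-information strategies lift by restricting the history to the trace of $A_{k(n)}$ in $\{0, \ldots, n\}$. Restricted to any block $A_k$, Two's target-plays constitute an honest run of $\tau$ against One's restricted moves, and therefore form an open cover of $X$; so for each $x$ some $n_k \in A_k$ has $x \in U_{n_k}$. Because the $n_k$ lie in distinct blocks, $\{n_k : k \in \omega\}$ is an unbounded subset of $\omega$, and $\langle U_n \rangle$ thus satisfies the sequence-$\lambda$-cover condition. The $\mathrm{I} \underset{\mathrm{pre}}{\not\uparrow}$ and $\mathrm{I} \underset{\mathrm{cnst}}{\not\uparrow}$ clauses follow similarly: given a target-predetermined One-strategy $\sigma$, define source-strategies $\sigma_k(j) := \sigma(\pi_k(j))$, pull winning counters block by block, and glue them at the positions in each $A_k$.

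For (c), I would exhibit a single witness. Take $X = \mathbb N$ with the discrete topology and $\mathcal C = \mathcal O_X$. Since $\mathbb N$ is countable it has the Rothberger property, so $\mathrm{I} \underset{\mathrm{pre}}{\not\uparrow} \mathsf G_1(\mathcal O_X, \mathcal O_X)$ and hence, by (a), $\mathrm{I} \underset{\mathrm{pre}}{\not\uparrow} \vec{\mathsf G}_1(\mathcal O_X, \Lambda_X)$. On the other hand, the predetermined One-strategy $\sigma(n) := \{\{k\} : k \in \mathbb N\}$ is winning in $\mathsf G_1(\mathcal O_X, \Lambda_X)$: any Two-response is a singleton $\{k_n\}$, and the underlying set of picks contains $\{x\}$ at most once for each $x$, so no $x$ lies in infinitely many distinct members. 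Hence $\mathrm{I} \underset{\mathrm{pre}}{\uparrow} \mathsf G_1(\mathcal O_X, \Lambda_X)$ and the $\mathrm{I} \underset{\mathrm{pre}}{\not\uparrow}$ clause of equivalence fails; the $\square = \mathrm{fin}$ case is handled identically. The main obstacle is the parallel-play lift in (a): the Markov case is clean, but the full-information and negated-predetermined/constant cases require careful bookkeeping of per-block histories and counters so as not to disturb the turn-numbering expected by the source strategy.
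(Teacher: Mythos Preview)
Your proposal is correct and follows essentially the same route as the paper: the same column/block decomposition of $\omega$ for the nontrivial direction of (a), the same ``winning condition depends only on the range'' observation for (b), and the same discrete-$\omega$-with-singleton-covers witness for (c). Your write-up is in fact more explicit than the paper's about the individual strategy levels (Markov, full-information for Two, predetermined/constant for One); both you and the paper leave the full-information $\mathrm{I}\not\uparrow$ clause at the level of ``apply the same idea with careful bookkeeping,'' which you rightly flag as the main obstacle.
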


\begin{proof}
    To establish \ref{sequenceGameA},
    first note that
    \[\vec{\mathsf G}_\square(\mathcal C, \Lambda_X) \leq_{\mathrm{II}}^+ \mathsf G_\square(\mathcal C, \mathcal O_X)\]
    is evident since, assuming Two can play in such a way that they produce a sequence \(\langle U_n : n \in \omega\rangle\)
    which is a \(\lambda\)-cover of \(X\), \(\{ U_n : n \in \omega \}\) is certainly an open cover of \(X\).

    For the reverse inequality,
    choose \(\beta : \omega^2 \to \omega\) to be a bijection so that \(\langle \beta(j,k) : k \in \omega \rangle\)
    is increasing for each \(j \in \omega\).
    Then, assume Two is playing in such a way that they can win \(\mathsf G_\square(\mathcal C, \mathcal O_X)\).
    The main idea here is to apply whatever winning strength Two has on each of the sequences \(\langle \beta(j,k) : k \in \omega \rangle\).
    So consider \(\langle \mathscr U_n : n \in \omega \rangle \in \mathcal C^\omega\) and
    fix some \(n\in \omega\).
    Let \(\langle m , \ell \rangle = \beta^{-1}(n)\) and consider \(\langle \langle m , k \rangle : k < \ell \rangle\).
    Since \(\beta(m,k)\) is increasing as a function of \(k\), we see that \(\beta(m,k) < n\) for all \(k < \ell\).
    Then we can apply whatever level of strength Two is able to use to win on the initial segment consisting of
    \(\langle \mathscr U_{\beta(m,k)} : k < \ell \rangle.\)
    So, for \(m \in \omega\), Two produces \(\langle U_{\beta(m,k)} : k \in \omega \rangle\) so that \(\{ U_{\beta(m,k)}:k\in\omega\} \in \mathcal O_X\).
    Consequently, the sequence \(\langle U_{\beta(m,k)} : m,k \in \omega \rangle\) is a \(\lambda\)-cover of \(X\).

    \ref{sequenceGameB} is evident since, for any sequence of open sets \(\langle U_n : n \in \omega\rangle\),
    \(\langle U_n : n \in \omega\rangle\) has the property
    \[(\forall A \in \mathcal A)(\exists n \in \omega)\ A \subseteq U_n\]
    if and only if \(\{ U_n : n \in \omega \} \in \mathcal O(X,\mathcal A)\).

    For \ref{sequenceGameC}, consider \(\omega\) with the discrete topology.
    Two can win the Rothberger game, but, if One plays the set of singletons, the winning play can never be a \(\lambda\)-cover;
    that is, by \ref{sequenceGameA}, Two can win \(\vec{\mathsf G}_1(\mathcal O_X,\Lambda_X)\), but Two cannot win
    \(\mathsf G_1(\mathcal O_X,\Lambda_X)\).
\end{proof}

We record the following corollary to Proposition \ref{prop:SequenceGame} for use later.
\begin{corollary} \label{cor:starMenger}
    Suppose \(X\) is star-Menger and let \(\langle \mathscr U_n : n \in \omega \rangle\) be a sequence of open covers.
    Then the star-Menger selection principle can be applied to produce a sequence \(\langle \mathscr V_n : n \in \omega\rangle\) where
    \(\mathscr V_n \in [\mathscr U_n]^{<\omega}\) for each \(n \in \omega\) so that, for every \(x \in X\) and \(m \in \omega\),
    there is \(n \geq m\) so that \(x \in \mathrm{St}\left( \bigcup \mathscr V_n , \mathscr U_n \right)\).
\end{corollary}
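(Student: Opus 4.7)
The plan is to emulate the bijection argument from the proof of Proposition~\ref{prop:SequenceGame}\ref{sequenceGameA}, splitting the given sequence into countably many infinite subsequences and applying star-Menger independently along each. This upgrades the bare covering conclusion of star-Menger into the desired ``tail'' (sequence-$\lambda$-cover) conclusion.

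I would fix a bijection $\beta : \omega \times \omega \to \omega$ such that $k \mapsto \beta(m,k)$ is strictly increasing for each $m \in \omega$. For each $m \in \omega$, I would apply the star-Menger property to the subsequence $\langle \mathscr U_{\beta(m,k)} : k \in \omega \rangle$ of open covers to obtain finite $\mathscr V_{\beta(m,k)} \subseteq \mathscr U_{\beta(m,k)}$ with
\[
\left\{ \mathrm{St}\!\left( \bigcup \mathscr V_{\beta(m,k)}, \mathscr U_{\beta(m,k)} \right) : k \in \omega \right\} \in \mathcal O_X.
\]
Since $\beta$ is a bijection, this defines $\mathscr V_n$ for every $n \in \omega$.

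To verify the claimed property, I would fix $x \in X$ and $m \in \omega$. For each row index $j \in \omega$, the cover condition coming from the application of star-Menger on row $j$ yields some $k_j$ with $x \in \mathrm{St}(\bigcup \mathscr V_{\beta(j,k_j)}, \mathscr U_{\beta(j,k_j)})$. Setting $n_j := \beta(j,k_j)$, the injectivity of $\beta$ forces the $n_j$ to be pairwise distinct as $j$ ranges over $\omega$, so $\{n_j : j \in \omega\}$ is an infinite, hence unbounded, subset of $\omega$, and therefore contains some $n \geq m$.

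The only point requiring even mild care is observing that one witness $k_j$ per row suffices to produce infinitely many valid indices, which follows immediately from the injectivity of $\beta$. I do not anticipate any substantive obstacle beyond setting up the bijection $\beta$ correctly.
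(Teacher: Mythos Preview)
Your proposal is correct and is essentially the paper's intended argument: the paper records this result as an immediate corollary to Proposition~\ref{prop:SequenceGame}\ref{sequenceGameA}, whose proof is precisely the bijection-splitting argument you reproduce here. You have simply unpacked that proof in the star-Menger context, and every step is sound.
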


\section{Star-Selection Games in Pixley-Roy Hyperspaces}

Many connections between selection principles involving \(\omega\)-covers of \(X\) and finite powers of \(X\)
have been established in \cite{GerlitsNagy,JustMillerScheepers,SakaiFunctionSpaces,ScheepersIII}.
By investigating the space of finite sets with the Vietoris topology (see \cite{MichaelHyperspaces}), many analogous results
were established in \cite{CHVietoris}.
Inspired by these connections and results of \cite{SakaiCardinal,SakaiStarVersionsMenger},
we now study Pixley-Roy hyperspaces.

The Pixley-Roy topology was originally defined in \cite{PixleyRoy} on subsets of the real numbers.
We will follow the generalized approach to more general classes of sets found in \cite{DouwenPR}.

\begin{definition}
If \(\mathcal A\) is an ideal of subsets of \(X\), we define the topological space \(\mathrm{PR}_{\mathcal A}(X)\) to be
the set \(\mathcal A\) with basic open sets of the form \[[A,U] = \{B \in \mathcal A : A \subseteq B \subseteq U\}\]
where \(A \in \mathcal A\) and \(U \subseteq X\) is open.
The sets \([A,U]\) form a basis since \(\mathcal A\) is an ideal;
indeed, for \(A, B \in\mathcal A\) and \(U, V \subseteq X\) open,
\[[A \cup B, U \cap V] = [A,U] \cap [B,V].\]
When \(\mathcal A = [X]^{<\omega}\) (resp. \(\mathcal A = K(X)\)), we use \(\mathrm{PR}(X)\)
(resp. \(\mathrm{PR}_{\mathbb K}(X)\)) instead of \(\mathrm{PR}_{\mathcal A}(X)\).
\end{definition}

As noted in \cite{DouwenPR}, for any space \(X\) and any ideal \(\mathcal A\) of subsets of \(X\),
\(\mathrm{PR}_{\mathcal A}(X)\) is \(T_1\) and has a basis consisting of clopen sets.

Now we define the cardinal invariants to be discussed.
For a general introduction to cardinal invariants, see \cite{HodelCardinal}.

\begin{definition}
    For a space \(X\), recall that the Lindel\"{o}f degree of \(X\), \(L(X)\), is the the least cardinal \(\kappa\)
    so that every open cover \(\mathscr U\) has a subcover of size \(\kappa\). Below we define various related cardinals.
    \begin{itemize}
        \item \(hL(X)\), the \emph{hereditary  Lindel\"{o}f degree} of \(X\), is the supremum of the of the Lindel\"{o}f degrees of all subspaces (equivalently, all open subspaces) of \(X\).
        \item \(L_{\mathrm{st}}(X)\), called the \emph{star-Lindel{\"{o}}f degree} of \(X\), denotes the least cardinal \(\kappa\) so that
        every open cover \(\mathscr U\) of \(X\) has a \(\kappa\)-sized subcollection \(\{ U_\alpha : \alpha < \kappa\}\)
        so that \(\{\mathrm{St}(U_\alpha, \mathscr U) : \alpha < \kappa\}\) is an open cover of \(X\).
        \item \(L_\omega(X)\), called the \emph{\(\omega\)-Lindel{\"{o}}f degree} of \(X\), is the least cardinal \(\kappa\) so that, for every \(\omega\)-cover of \(X\), there exists a \(\kappa\)-sized sub-\(\omega\)-cover (a subset which is an \(\omega\)-cover).
        \item \(hL_{\omega}(X)\), called the \emph{hereditary \(\omega\)-Lindel{\"{o}}f degree} of \(X\), is the supremum of the
        \(\omega\)-Lindel\"{o}f degrees of all open subspaces of \(X\).
    \end{itemize}
\end{definition}

Just as we say a space \(X\) is Lindel{\"{o}}f if \(L(X) = \omega\), we say that
\begin{itemize}
    \item
    \(X\) is \emph{hereditarily Lindel{\"{o}}f} if \(hL(X) = \omega\),
    \item
    \(X\) is \emph{star-Lindel{\"{o}}f} if \(L_{\mathrm{st}}(X) = \omega\),
    \item
    \(X\) is \emph{\(\omega\)-Lindel{\"{o}}f} if \(L_\omega(X) = \omega\), and
    \item
    \(X\) is \emph{hereditarily \(\omega\)-Lindel{\"{o}}f} if \(hL_{\omega}(X) = \omega\).
\end{itemize}

\begin{proposition} \label{prop:LindelofChain}
    For any space \(X\) and \(n \in \omega\),
    \[L_{\mathrm{st}}(X) \leq L(X) \leq L(X^n) \leq L\left(\bigcup_{k\in\omega} X^k\right) = L_\omega(X).\]
\end{proposition}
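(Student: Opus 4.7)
The plan is to establish each inequality in the chain separately, with the concluding equality being the main substantive step.

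The three outer inequalities of the chain are routine monotonicities. For $L_{\mathrm{st}}(X) \leq L(X)$, any subcover $\{U_\alpha : \alpha < L(X)\}$ of an open cover $\mathscr U$ yields the same-size star-cover $\{\mathrm{St}(U_\alpha, \mathscr U) : \alpha < L(X)\}$ because $U_\alpha \subseteq \mathrm{St}(U_\alpha, \mathscr U)$. For $L(X) \leq L(X^n)$, I would lift an open cover $\mathscr U$ of $X$ to the open cover $\{U \times X^{n-1} : U \in \mathscr U\}$ of $X^n$, extract a subcover of size $L(X^n)$, and project back onto $X$. For $L(X^n) \leq L(\bigcup_k X^k)$, I would use that $X^n$ is clopen in the topological sum, so that any open cover of $X^n$ extends by adjoining its complement to an open cover of the sum, and a subcover of size $L(\bigcup_k X^k)$ restricts back to one of $X^n$.

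The substantive content is the equality $L(\bigcup_k X^k) = L_\omega(X)$. Since the $X^k$ are pairwise disjoint clopen pieces of the sum, $L(\bigcup_k X^k) = \sup_k L(X^k)$. The direction $L_\omega(X) \leq L(\bigcup_k X^k)$ goes quickly: given an $\omega$-cover $\mathscr U$ of $X$, the family $\{U^m : U \in \mathscr U, m \in \omega\}$ covers $\bigcup_k X^k$, because any $(x_1,\ldots,x_m) \in X^m$ has finite support $\{x_1,\ldots,x_m\}$ contained in some single $U \in \mathscr U$. Extracting a subcover of size $L(\bigcup_k X^k)$ and dropping the exponents gives a sub-$\omega$-cover of $\mathscr U$ of the same size, since for any finite $F = \{x_1,\ldots,x_m\}$ the tuple $(x_1,\ldots,x_m) \in X^m$ lies in some selected $U^m$, forcing $F \subseteq U$.

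For the harder direction $L(X^n) \leq L_\omega(X)$, given an open cover $\mathscr U$ of $X^n$, I would form its closure under finite unions $\mathscr U^\ast$ and define
\[
\mathscr V = \{V \subseteq X : V \text{ open and } V^n \subseteq U^\ast \text{ for some } U^\ast \in \mathscr U^\ast\}.
\]
The key claim is that $\mathscr V$ is an $\omega$-cover of $X$: for finite $F \subseteq X$, the finite set $F^n$ is covered by finitely many members of $\mathscr U$ whose union $U^\ast$ lies in $\mathscr U^\ast$; then choosing a basic-open rectangle $B_1^{\vec y} \times \cdots \times B_n^{\vec y} \subseteq U^\ast$ around each $\vec y \in F^n$ and setting $W_x = \bigcap\{B_i^{\vec y} : \vec y \in F^n,\ y_i = x\}$ for each $x \in F$, the open set $V = \bigcup_{x\in F} W_x$ contains $F$ and satisfies $V^n \subseteq U^\ast$ (for $(v_1,\ldots,v_n) \in V^n$ pick $x_i \in F$ with $v_i \in W_{x_i}$ and apply the rectangle indexed by $\vec y = (x_1,\ldots,x_n)$). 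Applying $L_\omega(X)$ to $\mathscr V$ extracts a sub-$\omega$-cover $\mathscr V'$ of size $\leq L_\omega(X)$, and unpacking each $V$'s witness in $\mathscr U^\ast$ into finitely many members of $\mathscr U$ produces a subcover of $X^n$ of size $L_\omega(X) \cdot \omega = L_\omega(X)$. The principal obstacle is the $\omega$-cover verification: closing under finite unions is essential since no single $U \in \mathscr U$ need contain $F^n$, and producing an \emph{open} $V \supseteq F$ with $V^n$ inside a single $U^\ast$ (not merely an open covering of $F$) is precisely what requires the finite-intersection construction above.
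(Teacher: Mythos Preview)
Your argument is correct and matches the paper's approach: the first inequality via \(U \subseteq \mathrm{St}(U,\mathscr U)\), the middle two via \(X\) (resp.\ \(X^n\)) sitting as a closed subspace of \(X^n\) (resp.\ the disjoint union), and the final equality deferred to Gerlits--Nagy and Just--Miller--Scheepers--Szeptycki. The paper simply cites those references for the equality, whereas you supply the standard self-contained proof; your finite-intersection construction producing an open \(V \supseteq F\) with \(V^n \subseteq U^\ast\) is exactly the classical argument found there (with the harmless caveat that if \(\mathscr U\) already admits a finite subcover then \(X\) may land in \(\mathscr V\), but in that case the bound is trivial).
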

\begin{proof}
    The star-Lindel{\"{o}}f degree is clearly bounded by the Lindel{\"{o}}f degree of \(X\)
    as \(U \subseteq \mathrm{St}(U,\mathscr U)\) for any open cover \(\mathscr U\) of \(X\) and \(U \in \mathscr U\).
    The chain \(L(X) \leq L(X^n) \leq L\left(\bigcup_{k\in\omega} X^k\right)\) holds since \(X\) can be seen as a closed subspace of \(X^n\)
    and \(X^n\) is a closed subspace of \(\bigcup_{k\in\omega} X^k\).
    For the equality \(L\left(\bigcup_{k\in\omega} X^k\right) = L_\omega(X)\), see \cite{GerlitsNagy,JustMillerScheepers}.
\end{proof}

Note however, that these cardinals can be different.
If \(X\) is the Sorgenfrey line, then \(L(X) = \omega < L_\omega(X)\), which holds since
\(X\) is Lindel{\"{o}}f but \(X^2\) is not Lindel{\"{o}}f.
On the other hand, \(\omega_1\) with the order topology is star-Lindel\"{o}f via Fodor's Lemma
but is not Lindel\"{o}f.
So \(L_{\mathrm{st}}(\omega_1) = \omega < L(\omega_1)\).

Inspired by the \(\omega\)-Lindel{\"{o}}f degree, we offer a generalization
to other ideals of closed subsets.
\begin{definition}
    For a space \(X\) and an ideal \(\mathcal A\) of closed subsets of \(X\), we define
    \begin{itemize}
        \item
        \(L_{\mathcal A}(X)\), called the \emph{\(\mathcal A\)-Lindel{\"{o}}f degree} of \(X\),
        to be the least cardinal \(\kappa\) so that, for every \(\mathcal A\)-cover of \(X\),
        there exists a \(\kappa\)-sized sub-\(\mathcal A\)-cover (a subset which is an \(\mathcal A\)-cover).
        \item
        \(hL_{\mathcal A}(X)\), called the \emph{hereditary \(\mathcal A\)-Lindel{\"{o}}f degree} of \(X\),
        to be the supremum of the \(\mathcal A\)-Lindel\"{o}f degrees of all open subspaces of \(X\).
    \end{itemize}
\end{definition}
Note that, when \(\mathcal A = [X]^{<\omega}\), \(L_{\mathcal A}(X) = L_\omega(X)\) and \(hL_{\mathcal A}(X) = hL_\omega(X)\).

\begin{definition}
    Recall that a collection \(\mathscr F\) of subsets of a topological space \(X\) is said to be \emph{discrete} if,
    for every \(x \in X\), there exists an open neighborhood \(U\) of \(X\) so that \(U\) intersects at most one element of \(\mathscr F\).
    Let \(dc(X)\), called the \emph{discrete cellularity} of \(X\), denote the supremum over all cardinals \(\kappa\)
    so that there exists a \(\kappa\)-sized discrete family \(\mathscr U\) of open sets.
\end{definition}

\begin{definition}
    The \emph{pseudocharacter} \(\psi(X)\) of a space \(X\) is the least cardinal \(\kappa\) so that every point is the \(\kappa\)-length
    intersection of open sets.
    Analogously, the \emph{\(k\)-pseudocharacter} \(\psi_k(X)\) of a space \(X\) is the least cardinal \(\kappa\) so that
    every compact subset of \(X\) is the \(\kappa\)-length intersection of open sets.
\end{definition}

The following cardinal inequalities are inspired by various results of \cite{SakaiCardinal,SakaiStarVersionsMenger}.
\begin{proposition} \label{prop:StarLindelofToDiscreteCell}
    If \(X\) has a basis of clopen sets, then \(dc(X) \leq L_{\mathrm{st}}(X)\).
\end{proposition}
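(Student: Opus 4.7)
The plan is to show that any discrete family of nonempty open sets of size $\kappa$ in $X$ forces $L_{\mathrm{st}}(X) \geq \kappa$. I would begin by letting $\{F_\alpha : \alpha < \kappa\}$ be such a discrete family. Since discreteness implies pairwise disjointness (a common point would force some neighborhood to meet two members), I may assume the $F_\alpha$ are pairwise disjoint. Using the clopen basis, I shrink each $F_\alpha$ to a nonempty clopen subset $C_\alpha \subseteq F_\alpha$. The family $\{C_\alpha : \alpha < \kappa\}$ is again discrete, still pairwise disjoint, and each $C_\alpha$ equals its own closure.

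Next, the key structural observation: a discrete family is closure-preserving, so
\[
\overline{\bigcup_{\alpha<\kappa} C_\alpha} = \bigcup_{\alpha<\kappa} \overline{C_\alpha} = \bigcup_{\alpha<\kappa} C_\alpha.
\]
Thus $W := X \setminus \bigcup_{\alpha<\kappa} C_\alpha$ is open. For each $x \in W$, pick an open neighborhood $V_x \subseteq W$, and form the open cover
\[
\mathscr U = \{C_\alpha : \alpha < \kappa\} \cup \{V_x : x \in W\}.
\]

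The heart of the proof is the star computation. Since the $C_\alpha$ are pairwise disjoint, and every $V_x$ is contained in $W$, a direct inspection shows $\mathrm{St}(C_\alpha, \mathscr U) = C_\alpha$, while $\mathrm{St}(V_x, \mathscr U) \subseteq W$ meets no $C_\alpha$. Now suppose $\{U_\beta : \beta < \lambda\} \subseteq \mathscr U$ is a subcollection whose stars cover $X$. For each $\alpha < \kappa$, pick any $z_\alpha \in C_\alpha$; then $z_\alpha \in \mathrm{St}(U_{\beta(\alpha)}, \mathscr U)$ for some $\beta(\alpha) < \lambda$. Since no star of a $V_x$ meets $C_\alpha$, the set $U_{\beta(\alpha)}$ must be one of the $C_{\alpha'}$, and since $\mathrm{St}(C_{\alpha'}, \mathscr U) = C_{\alpha'}$ it must be $C_\alpha$ itself. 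Hence $\alpha \mapsto \beta(\alpha)$ is injective, forcing $\lambda \geq \kappa$. This establishes $L_{\mathrm{st}}(X) \geq \kappa$, and taking the supremum over discrete families gives $dc(X) \leq L_{\mathrm{st}}(X)$.

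The main obstacle, which the clopen basis and closure-preservation resolve jointly, is producing an ``outside'' cover of $X \setminus \bigcup_\alpha C_\alpha$ by open sets whose stars do not leak into any $C_\alpha$; without the clopen shrink and the closedness of $\bigcup_\alpha C_\alpha$, the star of an outside set could easily absorb some $C_\alpha$, destroying the injectivity argument. All remaining steps are routine once this separation is in place.
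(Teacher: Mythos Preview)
Your proof is correct and follows essentially the same approach as the paper's: take a discrete family, shrink to clopen sets, observe the complement is open, build a cover whose stars do not cross between the clopen pieces and the complement, and conclude that any star-subcover must contain all the clopen pieces. The only cosmetic difference is that the paper uses the single open set \(W = X \setminus \bigcup_\alpha C_\alpha\) in the cover rather than your family \(\{V_x : x \in W\}\), which simplifies the star computation slightly but changes nothing substantive.
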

\begin{proof}
    Let \(\kappa\) be cardinal so that \(dc(X) > \kappa\).
    We will show that \(L_{\mathrm{st}}(X) > \kappa\) as well, which in turn means that \(L_{\mathrm{st}}(X) \geq dc(X)\).
    By our assumption that \(dc(X) > \kappa\), there is a cardinal \(\lambda > \kappa\) and a collection of non-empty open sets
    \(\{U_\alpha : \alpha < \lambda\}\) which is discrete (and so also pairwise disjoint).
    As we have a basis of clopen sets, we can assume the \(U_\alpha\) are clopen.
    Set \(W = X \setminus \bigcup_{\alpha < \lambda}U_\alpha\).
    To show that \(W\) is open, let \(y \in W\).
    Then there is an open set \(V\) so that \(y \in V\) and \(V\) meets at most one \(U_\alpha\).
    Then \(y \in V \setminus U_\alpha\), and \(V \setminus U_\alpha \subseteq W\), so \(W\) is open.

    Let \(\mathscr U = \{U_\alpha : \alpha < \lambda\} \cup \{W\}\) and note that this forms an open cover of \(X\).
    Suppose \(\mathscr V\) is a \(\kappa\)-sized sub-collection of \(\mathscr U\).
    Since the \(U_\alpha\) are disjoint, \(\mathrm{St}(U_\alpha,\mathscr U) = U_\alpha\) and \(\mathrm{St}(W,\mathscr U) = W\) as well.
    Then
    \[
    \bigcup_{V \in \mathscr V} \mathrm{St}\left( V , \mathscr U \right)
    = \mathrm{St}\left( \bigcup\mathscr V , \mathscr U \right) = \bigcup \mathscr V \neq X,
    \]
    which implies that \(L_{\mathrm{st}}(X) > \kappa\).
\end{proof}
\begin{corollary} \label{cor:PRAXStarLindelof}
    For any space \(X\) and any ideal \(\mathcal A\) of compact subsets of \(X\),
    \[
    dc(\mathrm{PR}_{\mathcal A}(X)) \leq L_{st}(\mathrm{PR}_{\mathcal A}(X)).
    \]
\end{corollary}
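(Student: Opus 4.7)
The plan is to apply Proposition \ref{prop:StarLindelofToDiscreteCell} directly to the space \(\mathrm{PR}_{\mathcal{A}}(X)\). The hypothesis needed is that \(\mathrm{PR}_{\mathcal{A}}(X)\) admits a basis of clopen sets, but this is exactly one of the standard facts about Pixley-Roy hyperspaces cited from \cite{DouwenPR} immediately after the definition of \(\mathrm{PR}_{\mathcal{A}}(X)\): for any space \(X\) and any ideal \(\mathcal{A}\) of subsets of \(X\), \(\mathrm{PR}_{\mathcal{A}}(X)\) is \(T_1\) and has a basis of clopen sets. Since this is true regardless of whether \(\mathcal{A}\) consists of compact sets or not, the hypothesis of the proposition is automatically satisfied.

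Thus the proof is essentially a single invocation: applying Proposition \ref{prop:StarLindelofToDiscreteCell} to \(Y = \mathrm{PR}_{\mathcal{A}}(X)\) yields
\[
dc(\mathrm{PR}_{\mathcal{A}}(X)) \leq L_{\mathrm{st}}(\mathrm{PR}_{\mathcal{A}}(X)),
\]
which is the desired conclusion. There is no real obstacle here; the corollary is a formal consequence recorded for use in later sections, where presumably bounds on \(dc(\mathrm{PR}_{\mathcal{A}}(X))\) will be combined with bounds on \(L_{\mathrm{st}}(\mathrm{PR}_{\mathcal{A}}(X))\) (or vice versa) to establish cardinality bounds connecting ground-space invariants with hyperspace invariants, in the spirit of the announced Proposition \ref{prop:AnotherPseudocharacterBound} and Proposition \ref{prop:dcBoundsPRAX}. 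The assumption that \(\mathcal{A}\) consists of compact subsets plays no role in this particular step; it is included because it will matter for the subsequent results that this corollary feeds into.
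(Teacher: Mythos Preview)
Your proof is correct and matches the paper's own argument exactly: the paper simply notes that \(\mathrm{PR}_{\mathcal A}(X)\) has a basis of clopen sets and applies Proposition~\ref{prop:StarLindelofToDiscreteCell}.
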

\begin{proof}
    Since \(\mathrm{PR}_{\mathcal A}(X)\) has a basis of clopen sets, we can apply Proposition \ref{prop:StarLindelofToDiscreteCell}.
\end{proof}

Propositions \ref{prop:dcBoundsPRAX} and \ref{prop:hereditaryABoundsHereditary}
offer a generalization of \cite[Thm. 3.10]{SakaiCardinal}.
\begin{proposition} \label{prop:dcBoundsPRAX}
    For any space \(X\) and any ideal \(\mathcal A\) of compact subsets of \(X\), \[hL_{\mathcal A}(X) \leq dc(\mathrm{PR}_{\mathcal A}(X)).\]
\end{proposition}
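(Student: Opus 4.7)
The plan is to show that $dc(\mathrm{PR}_{\mathcal{A}}(X)) > \kappa$ for every cardinal $\kappa < hL_{\mathcal{A}}(X)$, which gives the stated inequality. Fix such a $\kappa$, pick an open subspace $U \subseteq X$ with $L_{\mathcal{A}}(U) > \kappa$, and choose an $\mathcal{A}$-cover $\mathscr{V}$ of $U$ (consisting of sets open in $X$ and contained in $U$) that admits no sub-$\mathcal{A}$-cover of size $\leq \kappa$. I will then perform a transfinite recursion of length $\kappa^+$: at stage $\alpha < \kappa^+$, the already-chosen collection $\{V_\beta : \beta < \alpha\}$ has size at most $\kappa$, so it fails to be an $\mathcal{A}$-cover of $U$; hence I can pick $A_\alpha \in \mathcal{A}$ with $A_\alpha \subseteq U$ and $A_\alpha \not\subseteq V_\beta$ for all $\beta < \alpha$, and then pick $V_\alpha \in \mathscr{V}$ with $A_\alpha \subseteq V_\alpha$.

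I then claim that $\{[A_\alpha, V_\alpha] : \alpha < \kappa^+\}$ is a discrete family of nonempty open sets in $\mathrm{PR}_{\mathcal{A}}(X)$, witnessing $dc(\mathrm{PR}_{\mathcal{A}}(X)) \geq \kappa^+ > \kappa$. The key computation is that for any $B \in \mathcal{A}$ and any open $W \subseteq X$ with $B \subseteq W$, the intersection $[B,W] \cap [A_\alpha, V_\alpha]$ is nonempty exactly when $B \subseteq V_\alpha$ and $A_\alpha \subseteq W$, since $\mathcal{A}$ is an ideal so $B \cup A_\alpha \in \mathcal{A}$ and witnesses the intersection when both containments hold. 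Given $B \in \mathcal{A}$, set $S_B = \{\alpha < \kappa^+ : B \subseteq V_\alpha\}$; if $S_B = \emptyset$, then the neighborhood $[B,X]$ of $B$ meets no $[A_\alpha, V_\alpha]$. Otherwise, using the well-ordering of $\kappa^+$, let $\alpha^* = \min S_B$ and use the neighborhood $[B, V_{\alpha^*}]$. For $\alpha \in S_B$ with $\alpha > \alpha^*$, the recursion condition forces $A_\alpha \not\subseteq V_{\alpha^*}$, and for $\alpha \notin S_B$ one has $B \not\subseteq V_\alpha$; thus $[B, V_{\alpha^*}]$ intersects $[A_\alpha, V_\alpha]$ only at $\alpha = \alpha^*$.

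The main obstacle is identifying the right neighborhood that witnesses discreteness at a given $B \in \mathcal{A}$. Pairwise disjointness of the $[A_\alpha, V_\alpha]$ follows immediately from $A_\alpha \not\subseteq V_\beta$ for $\beta < \alpha$, but the stronger discreteness condition requires combining the asymmetric recursion condition with the well-ordering of the indexing ordinal to single out the minimal $\alpha^* \in S_B$ and then use $V_{\alpha^*}$ itself as the upper-bound of the basic neighborhood. Note also that compactness of elements of $\mathcal{A}$ is not used directly in the argument; only the ideal structure of $\mathcal{A}$ and the ability to form the basic open sets $[A_\alpha, V_\alpha]$ in $\mathrm{PR}_{\mathcal{A}}(X)$ are required.
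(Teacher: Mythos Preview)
Your proof is correct, and it takes a genuinely different route from the paper's.

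The paper argues in the contrapositive direction: given an arbitrary \(\mathcal{A}\)-cover \(\{U_\alpha : \alpha < \lambda\}\) of \(X\), it builds a discrete family of clopen sets in \(\mathrm{PR}_{\mathcal{A}}(X)\) by the recursive complement construction \(V_\alpha = \mathrm{PR}_{\mathcal{A}}(U_\alpha) \setminus \bigcup_{\beta < \alpha} V_\beta\), proves by transfinite induction that each \(V_\alpha\) is clopen, and then uses the bound \(dc(\mathrm{PR}_{\mathcal{A}}(X))\) to conclude that at most that many \(V_\alpha\) are nonempty---those indices yield a small sub-\(\mathcal{A}\)-cover. The hereditary statement is handled separately via the observation that \(\mathrm{PR}_{\mathcal{A}}(U)\) is clopen in \(\mathrm{PR}_{\mathcal{A}}(X)\), so discrete cellularity is monotone.

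Your argument instead fixes \(\kappa < hL_{\mathcal{A}}(X)\) and directly manufactures a discrete family of basic open sets \([A_\alpha,V_\alpha]\) of size \(\kappa^+\) by a recursion that at each stage exploits the failure of the previously chosen \(V_\beta\) to form a sub-\(\mathcal{A}\)-cover. The discreteness verification via the minimal index \(\alpha^\ast \in S_B\) and the neighborhood \([B,V_{\alpha^\ast}]\) is clean and avoids the clopen-complement machinery entirely. Your approach is more elementary in that it uses only basic open sets and never needs to establish that anything is clopen, and it handles the hereditary case in one pass rather than reducing to the non-hereditary case plus a monotonicity lemma. The paper's approach, on the other hand, yields the auxiliary structural fact that \(\mathrm{PR}_{\mathcal{A}}(U)\) is clopen and gives an explicit extraction of a small sub-\(\mathcal{A}\)-cover from any given \(\mathcal{A}\)-cover, which is a slightly more constructive statement. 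Your closing remark that compactness of the elements of \(\mathcal{A}\) plays no role is accurate and applies equally to the paper's proof of this proposition.
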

\begin{proof}
    We first show that, for open \(U \subseteq X\), \(\mathrm{PR}_{\mathcal A}(U)\) is a clopen subspace of \(\mathrm{PR}_{\mathcal A}(X)\).
    \(\mathrm{PR}_{\mathcal A}(U)\) is clearly open as the union of basic open sets.
    To see that \(\mathrm{PR}_{\mathcal A}(U)\) is closed, let \(A \not\in \mathrm{PR}_{\mathcal A}(U)\).
    That means that \(A \not\subseteq U\), so there is some \(x \in A \setminus U\).
    Then \([\{x\} , X]\) is a neighborhood of \(A\) that is disjoint from \(\mathrm{PR}_{\mathcal A}(U)\).

    We now show that \(L_{\mathcal A}(X) \leq dc(\mathrm{PR}_{\mathcal A}(X)).\)
    Assume \(\mathscr U = \{U_\alpha : \alpha < \lambda\}\) is an \(\mathcal A\)-cover of \(X\) consisting of non-empty sets.
    For \(\alpha < \lambda\), let
    \[
    V_\alpha = \mathrm{PR}_{\mathcal A}(U_\alpha) \setminus \bigcup_{\beta < \alpha} V_\beta.
    \]
    Clearly, the \(V_\alpha\) are disjoint.
    We claim they are clopen as well.

    Note that \(V_0 = \mathrm{PR}_{\mathcal A}(U_0)\), which is clopen.
    So, given \(\alpha > 0\), suppose that \(V_\beta\) is clopen for all \(\beta < \alpha\).
    It is immediate then that \(V_\alpha\) is closed.
    To see that \(V_\alpha\) is open, let \(A \in V_\alpha\).
    Then \([A, U_\alpha]\) is a neighborhood of \(A\) disjoint from \(\bigcup_{\beta < \alpha} V_\beta\).
    To see this, let \(B \in [A, U_\alpha]\) and \(\beta < \alpha\).
    Note that
    \[\bigcup_{\gamma \leq \beta} V_\gamma = \bigcup_{\gamma \leq \beta} \mathrm{PR}_{\mathcal A}(U_\gamma).\]
    Since \(A \not\in \bigcup_{\gamma < \alpha} V_\gamma\), we see that \(A \not\subseteq U_\beta\).
    Hence, \(B \not\subseteq U_\beta\), so \(B \not\in V_\beta\).

    By a similar argument as what's above, \(\{ V_\alpha : \alpha < \lambda \}\) is a discrete family of clopen sets.
    So, by the discrete cellularity, there are \(\alpha_\xi < \lambda\) for \(\xi < \delta := dc(\mathrm{PR}_{\mathcal A}(X))\) so that
    \(V_{\alpha_\xi} \neq \emptyset\) for all \(\xi < \delta\) and \(V_\beta = \emptyset\) for all \(\beta \notin \{\alpha_\xi : \xi < \delta\}\).
    Now, \(\{ V_{\alpha_\xi} : \xi < \delta\}\) is a cover of \(\mathrm{PR}_{\mathcal A}(X)\).
    To see this, let \(A \in \mathcal A\) and take \(\alpha < \lambda\) to be minimal so that \(A \subseteq U_\alpha\).
    Then \(A \in V_\alpha\) which means \(\alpha \in \{ \alpha_\xi : \xi < \delta\}\).
    Hence, \(\{ U_{\alpha_\xi} : \xi < \delta \}\) is an \(\mathcal A\)-cover of \(X\).

    We finally show that \(hL_{\mathcal A}(X) \leq dc(\mathrm{PR}_{\mathcal A}(X)).\)
    Let \(V \subseteq X\) be open.
    Then \(\mathrm{PR}_{\mathcal A}(V)\) is clopen, and so \(dc(\mathrm{PR}_{\mathcal A}(V)) \leq dc(\mathrm{PR}_{\mathcal A}(X))\).
    Thus \(L_{\mathcal A}(V) \leq dc(\mathrm{PR}_{\mathcal A}(X))\) by the same proof as above.
\end{proof}
\begin{proposition} \label{prop:hereditaryABoundsHereditary}
    For a non-compact space \(X\) and any ideal \(\mathcal A\) of compact subsets of \(X\), \[hL(X) \leq hL_{\mathcal A}(X).\]
\end{proposition}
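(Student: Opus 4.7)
The plan is to reduce to the pointwise statement $L(U)\leq hL_{\mathcal A}(X)$ for every open $U\subseteq X$; this suffices since $hL(X)$ is by definition the supremum of $L(U)$ over open subspaces $U$. For such a $U$ and an open cover $\mathscr U$ of $U$, I will inflate $\mathscr U$ to a non-trivial $\mathcal A$-cover of $U$, pull out a sub-$\mathcal A$-cover of size at most $hL_{\mathcal A}(X)$ using the hypothesis, and then unpack that back into a subcover of $U$ built from members of $\mathscr U$.

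Concretely, I will let $\mathscr U'$ consist of all finite unions of members of $\mathscr U$. Every $A\in\mathcal A$ with $A\subseteq U$ is compact, hence covered by finitely many members of $\mathscr U$, so their union lies in $\mathscr U'$; thus $\mathscr U'$ is an $\mathcal A$-cover of $U$. If $\mathscr U$ admits no finite subcover of $U$ then $U\notin\mathscr U'$, so $\mathscr U'$ is non-trivial; if it does admit a finite subcover, then $L(U)\leq\aleph_0$ and the statement reduces to the subtlety handled below. Applying $L_{\mathcal A}(U)\leq hL_{\mathcal A}(X)$ produces $\mathscr V'\subseteq\mathscr U'$ of size at most $hL_{\mathcal A}(X)$ which is still an $\mathcal A$-cover of $U$. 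Writing each $W\in\mathscr V'$ as $V_1^W\cup\cdots\cup V_{n_W}^W$ with $V_i^W\in\mathscr U$, I then take $\mathscr V=\{V_i^W:W\in\mathscr V',\, 1\leq i\leq n_W\}\subseteq\mathscr U$. Because $\{x\}\in\mathcal A$ for each $x\in U$, the $\mathcal A$-cover property of $\mathscr V'$ forces some $W\in\mathscr V'$ to contain $x$, and hence some $V_i^W$ does, so $\mathscr V$ covers $U$; moreover $|\mathscr V|\leq\aleph_0\cdot|\mathscr V'|$.

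The main obstacle is showing that this last bound is at most $hL_{\mathcal A}(X)$, which requires $hL_{\mathcal A}(X)\geq\aleph_0$, and this is the only place where non-compactness of $X$ enters. To establish it, I will run the same saturation trick on a non-trivial open cover of $X$ with no finite subcover: the resulting finite-union collection is a non-trivial $\mathcal A$-cover of $X$ that admits no finite sub-$\mathcal A$-cover, since combining finitely many such finite unions back into $\mathscr U$-members would yield a finite subcover of $X$. Hence $L_{\mathcal A}(X)\geq\aleph_0$, so $hL_{\mathcal A}(X)\geq\aleph_0$, and the bound $|\mathscr V|\leq \aleph_0\cdot hL_{\mathcal A}(X)=hL_{\mathcal A}(X)$ completes the argument.
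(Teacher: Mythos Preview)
Your proof is correct and follows essentially the same approach as the paper: pass from an open cover of an open subspace to the $\mathcal A$-cover of finite unions, extract a small sub-$\mathcal A$-cover, and unpack it back into a small subcover, with the factor of $\aleph_0$ absorbed because $hL_{\mathcal A}(X)$ is infinite. You actually supply details the paper leaves implicit, namely the non-triviality of the finite-union cover and the argument from non-compactness that $hL_{\mathcal A}(X)\geq\aleph_0$.
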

\begin{proof}
    Suppose \(\mathscr U\) is an open cover of an open set \(V \subseteq X\).
    Let \[\mathscr V = \left\{ \bigcup \mathcal F : \mathcal F \in \left[\mathscr U\right]^{<\omega}\right\}.\]
    Notice that, since \(\mathcal A\) consists of compact sets, \(\mathscr V\) is an \(\mathcal A\)-cover of \(V\).
    That means we can find a sub-\(\mathcal A\)-cover \(\mathscr V'\) so that \(|\mathscr V'| \leq hL_{\mathcal A}(X)\).
    Now for each \(V \in \mathscr V'\), choose \(\mathcal F_V \subseteq \mathscr U\) which is finite so that \(V = \bigcup \mathcal F_V\).
    Let \(\mathscr U' = \bigcup_{V \in \mathscr V'}\mathcal F_V\).
    Then \(\mathscr U'\) is a cover of \(V\) and
    \[
    |\mathscr U'| \leq |\mathscr V'| \cdot \aleph_0 \leq hL_{\mathcal A}(X) \cdot \aleph_0.
    \]
    The rightmost part is equal to \(hL_{\mathcal A}(X)\) provided that it is infinite.
\end{proof}

\begin{proposition} \label{prop:HereditaryLindelofBoundsPseudocharacter}
    If \(X\) is regular, \(\psi_k(X) \leq hL(X)\).
\end{proposition}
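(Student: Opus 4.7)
The plan is to exhibit, for each compact $K \subseteq X$, a family of $hL(X)$-many open sets whose intersection is exactly $K$. Set $\kappa = hL(X)$ and fix a compact $K \subseteq X$; since $X$ is Hausdorff, $K$ is closed, so $X \setminus K$ is open.

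First I would use regularity together with compactness of $K$ to separate $K$ from each external point by disjoint open sets: for each $x \in X \setminus K$, choose open sets $U_x \supseteq K$ and $V_x \ni x$ with $U_x \cap V_x = \emptyset$. This is the standard fact that in a regular space, a compact set and a disjoint point can be separated by disjoint opens (cover $K$ by small opens whose closures avoid $x$, extract a finite subcover, then take the union and the complement of the union of closures).

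Next, observe that $\{V_x : x \in X \setminus K\}$ is an open cover of the open subspace $X \setminus K$. Since $L(X \setminus K) \leq hL(X) = \kappa$, there exist $\{x_\alpha : \alpha < \kappa\} \subseteq X \setminus K$ with $X \setminus K = \bigcup_{\alpha < \kappa} V_{x_\alpha}$. Because $U_{x_\alpha} \cap V_{x_\alpha} = \emptyset$ gives $U_{x_\alpha} \subseteq X \setminus V_{x_\alpha}$, we obtain
\[
K \subseteq \bigcap_{\alpha < \kappa} U_{x_\alpha} \subseteq \bigcap_{\alpha < \kappa}(X \setminus V_{x_\alpha}) = X \setminus \bigcup_{\alpha < \kappa} V_{x_\alpha} = K.
\]
Hence $K = \bigcap_{\alpha < \kappa} U_{x_\alpha}$ is a $\kappa$-length intersection of open sets, so $\psi_k(X) \leq \kappa$.

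There is no real obstacle here; the proof is a direct mimic of the classical argument that $\psi(X) \leq hL(X)$ for regular $X$, with points replaced by compact sets. The only step that needs a word of care is the separation of the compact set $K$ from a point by disjoint opens, which relies on regularity (not merely Hausdorffness) and on the compactness of $K$.
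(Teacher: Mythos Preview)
Your argument is correct and is essentially the same as the paper's: both separate $K$ from each point of $X\setminus K$ using regularity, extract an $hL(X)$-sized subcover of $X\setminus K$, and intersect the corresponding $K$-neighborhoods. One cosmetic remark: since $K$ is closed (compact in a Hausdorff space), regularity already separates a point from $K$ directly, so the compactness-based finite-subcover step you describe is not actually needed for the separation.
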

\begin{proof}
    Let \(K \subseteq X\) be compact and consider \[\mathscr V = \{ F \subseteq X : K \subseteq \mathrm{int}(F) \text{ and } F \text{ is closed}\}.\]
    Notice that \(\{ X \setminus F : F \in \mathscr V \}\) is a cover of \(X \setminus K\)
    by regularity.
    Using the hereditary Lindel{\"{o}}f degree, there is an \(hL(X)\)-sized collection \(\{X \setminus F_\alpha : \alpha < hL(X)\}\)
    so that \(X \setminus K = \bigcup_{\alpha < hL(X)} (X \setminus F_\alpha)\).
    Let \(U_\alpha = \mathrm{int}(F_\alpha)\) and notice that \(K \subseteq U_\alpha\) for all \(\alpha < hL(X)\).
    Clearly, \(K \subseteq \bigcap_{\alpha < hL(X)} U_\alpha\).
    Now if \(y \not\in K\), then \(y \in X \setminus F_\alpha\) for some \(\alpha\).
    Thus \(y \notin U_\alpha\), so \(y \notin \bigcap_{\alpha < \kappa} U_\alpha\).
    Therefore \(K = \bigcap_{\alpha < hL(X)} U_\alpha\).
\end{proof}
\begin{proposition} \label{prop:AnotherPseudocharacterBound}
    If \(X\) is a non-compact regular space and \(\mathcal A\) is an ideal of compact subsets of \(X\), then
    \[\psi_k(X) \leq L_{st}(\mathrm{PR}_{\mathcal A}(X)).\]
\end{proposition}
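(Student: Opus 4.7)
The plan is to observe that the desired inequality follows from a simple chain of results already established in this section, each of which applies under the stated hypotheses. Specifically, the assumptions that $X$ is a non-compact regular space and that $\mathcal A$ is an ideal of compact subsets are precisely what is needed to invoke the relevant prior propositions.

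First I would apply Proposition \ref{prop:HereditaryLindelofBoundsPseudocharacter}, which uses regularity of $X$, to get $\psi_k(X) \leq hL(X)$. Next, since $X$ is non-compact and $\mathcal A$ consists of compact sets, Proposition \ref{prop:hereditaryABoundsHereditary} yields $hL(X) \leq hL_{\mathcal A}(X)$. Then Proposition \ref{prop:dcBoundsPRAX}, whose hypotheses match exactly, gives $hL_{\mathcal A}(X) \leq dc(\mathrm{PR}_{\mathcal A}(X))$. Finally, Corollary \ref{cor:PRAXStarLindelof} provides $dc(\mathrm{PR}_{\mathcal A}(X)) \leq L_{\mathrm{st}}(\mathrm{PR}_{\mathcal A}(X))$.

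Concatenating these four inequalities gives
\[
\psi_k(X) \leq hL(X) \leq hL_{\mathcal A}(X) \leq dc(\mathrm{PR}_{\mathcal A}(X)) \leq L_{\mathrm{st}}(\mathrm{PR}_{\mathcal A}(X)),
\]
which is the claim. There is no real obstacle here; the only thing to verify is that each hypothesis of the cited results is met, which is immediate from the hypotheses of the proposition. The substantive content has all been absorbed into the preceding propositions, so the proof itself is a one-line citation chain.
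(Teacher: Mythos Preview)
Your proposal is correct and follows essentially the same approach as the paper: both chain together Proposition~\ref{prop:HereditaryLindelofBoundsPseudocharacter}, Proposition~\ref{prop:hereditaryABoundsHereditary}, Proposition~\ref{prop:dcBoundsPRAX}, and Corollary~\ref{cor:PRAXStarLindelof} in exactly that order to obtain the displayed chain of inequalities.
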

\begin{proof}
    Since \(X\) is regular, \(\psi_k(X) \leq hL(X)\)
    by Proposition \ref{prop:HereditaryLindelofBoundsPseudocharacter}.
    Then Propositions \ref{prop:hereditaryABoundsHereditary}, \ref{prop:dcBoundsPRAX} and Corollary \ref{cor:PRAXStarLindelof}
    show that
    \[
    hL(X) \leq hL_{\mathcal A}(X) \leq dc(\mathrm{PR}_{\mathcal A}(X)) \leq L_{st}(\mathrm{PR}_{\mathcal A}(X)).
    \]
    Therefore, \(\psi_k(X) \leq L_{st}(\mathrm{PR}_{\mathcal A}(X))\).
\end{proof}

The following captures \cite[Thm. 4.12]{SakaiStarVersionsMenger} (see also \cite[Thm. 3.4 (2)]{KocinacStarSelection}), but extends
it to the other strategy types considered in this paper.
\begin{theorem} \label{thm:FirstBound}
    Assume \(X\) is regular. Then, for \(\square \in \{1,\mathrm{fin}\}\),
    \[
    \mathsf G_\square^*(\mathcal O_{\mathrm{PR}(X)},\mathcal O_{\mathrm{PR}(X)}) \leq_{\mathrm{II}} \mathsf G_\square(\Omega_X,\Omega_X).
    \]
\end{theorem}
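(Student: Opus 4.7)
The plan is to invoke Corollary \ref{cor:Translation}. Given \(\mathscr U \in \Omega_X\), I use choice to well-order \(\mathscr U\) as \(\{U_\alpha^{\mathscr U} : \alpha < \kappa_{\mathscr U}\}\), and for each ordinal \(\alpha < \kappa_{\mathscr U}\) I set
\[
V_\alpha^{\mathscr U} = \{K \in [X]^{<\omega} : K \subseteq U_\alpha^{\mathscr U}\ \text{and}\ K \not\subseteq U_\beta^{\mathscr U}\ \text{for all}\ \beta < \alpha\}.
\]
A short check with the basic open sets \([K, U]\) of \(\mathrm{PR}(X)\) shows each \(V_\alpha^{\mathscr U}\) is clopen: given \(K \in V_\alpha^{\mathscr U}\), the basic open set \([K, U_\alpha^{\mathscr U}]\) lies in \(V_\alpha^{\mathscr U}\), so \(V_\alpha^{\mathscr U}\) is open, and its complement is the union \(\bigcup_{\beta \neq \alpha} V_\beta^{\mathscr U}\), which is open for the same reason. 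Because \(\mathscr U\) is an \(\omega\)-cover, every \(K \in [X]^{<\omega}\) has a well-defined minimum index \(\min\{\beta : K \subseteq U_\beta^{\mathscr U}\}\), so \(\{V_\alpha^{\mathscr U}\}_\alpha\) is a pairwise disjoint clopen partition of \(\mathrm{PR}(X)\). Disjointness forces \(\mathrm{St}(V_\alpha^{\mathscr U}, \{V_\beta^{\mathscr U}\}_\beta) = V_\alpha^{\mathscr U}\), so \(\{V_\alpha^{\mathscr U} : \alpha < \kappa_{\mathscr U}\}\) is a legitimate element of \(\mathrm{Gal}(\mathrm{id}, \mathcal O_{\mathrm{PR}(X)})\).

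Now set
\[
\overleftarrow{T}_{\mathrm{I}}(\mathscr U) = \{V_\alpha^{\mathscr U} : \alpha < \kappa_{\mathscr U}\}, \qquad \overrightarrow{T}_{\mathrm{II}}(V_\alpha^{\mathscr U}, \mathscr U) = U_\alpha^{\mathscr U}.
\]
Condition \ref{translationPropI} is immediate. For \ref{translationPropII}, take finite sets \(\mathcal F_n \in [\overleftarrow{T}_{\mathrm{I}}(\mathscr U_n)]^{<\omega}\) with \(\bigcup_n \mathcal F_n \in \mathcal O_{\mathrm{PR}(X)}\). Each \(K \in [X]^{<\omega}\) then sits in some \(V_{\alpha_*}^{\mathscr U_n} \in \mathcal F_n\), and by the definition of \(V_{\alpha_*}^{\mathscr U_n}\) this forces \(K \subseteq U_{\alpha_*}^{\mathscr U_n}\). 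The output family \(\bigcup_n \{\overrightarrow{T}_{\mathrm{II}}(V, \mathscr U_n) : V \in \mathcal F_n\}\) is therefore an \(\omega\)-cover of \(X\), and Corollary \ref{cor:Translation} delivers the inequality for both \(\square \in \{1, \mathrm{fin}\}\).

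The key idea is the clopen partition of \(\mathrm{PR}(X)\) attached to a well-ordering of \(\mathscr U\), exactly in the spirit of Proposition \ref{prop:dcBoundsPRAX}: each partition block \(V_\alpha^{\mathscr U}\) is small enough that membership in it forces containment in \(U_\alpha^{\mathscr U}\), which is what makes the extraction \(\overrightarrow{T}_{\mathrm{II}}\) transparent. Once that partition is on the table, the remaining work, checking clopenness at all ordinals (including limits) and confirming that \(\overleftarrow{T}_{\mathrm{I}}(\mathscr U)\) really is a galaxy, reduces to standard local manipulations of the basic open sets in \(\mathrm{PR}(X)\); this clopenness verification is the main technical point I expect to address.
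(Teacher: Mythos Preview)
Your argument is correct and takes a genuinely different, cleaner route than the paper. The paper first observes (via Remark~\ref{rmk:Lindelof}) that one may assume \(\mathrm{PR}(X)\) is star-Lindel{\"o}f, then invokes Proposition~\ref{prop:AnotherPseudocharacterBound} to obtain that points of \(X\) are \(G_\delta\), and uses the \(G_\delta\) witnesses to build turn-dependent covers \(\mathscr V_{\mathscr U,n}\); the endgame requires a pigeonhole step together with Corollary~\ref{cor:starMenger} to pass through a \(\lambda\)-cover. Your clopen-partition trick, borrowed from the proof of Proposition~\ref{prop:dcBoundsPRAX}, sidesteps all of this: because the partition blocks are pairwise disjoint, \(\mathrm{Cons}(\mathscr V,\mathscr V)=\mathscr V\), so the star structure never enters the analysis, and the translation maps are turn-independent. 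This yields two bonuses the paper's argument does not give directly: you never use regularity of \(X\), and your turn-independent \(\overleftarrow{T}_{\mathrm I}\) actually delivers the stronger conclusion \(\leq_{\mathrm{II}}^{+}\) via Corollary~\ref{cor:Translation} (the paper only recovers the Bar-Ilan implication separately through the cardinal-inequality chain of Propositions~\ref{prop:dcBoundsPRAX} and Corollary~\ref{cor:PRAXStarLindelof}). The only small item to tidy is the well-definedness of \(\overrightarrow{T}_{\mathrm{II}}\): distinct ordinals \(\alpha\neq\beta\) can give \(V_\alpha^{\mathscr U}=V_\beta^{\mathscr U}\) only when both are empty, so either discard empty blocks from the cover or assign \(\overrightarrow{T}_{\mathrm{II}}(\emptyset,\mathscr U)\) arbitrarily, invoking Remark~\ref{rmk:TranslationConvention}.
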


\begin{proof}
    As noted in Remark \ref{rmk:Lindelof}, we may assume that \(\mathrm{PR}(X)\) is star-Menger.
    Then \(\mathrm{PR}(X)\) is also star-Lindel{\"{o}}f which,
    by Proposition \ref{prop:AnotherPseudocharacterBound}, implies that every singleton of \(X\) is a \(G_\delta\).
    So, for \(x \in X\), we define open sets \(G_{x,n}\) so that \(\{x\} = \bigcap \{ G_{x,n} : n \in \omega \}\).
    Without loss of generality, we suppose the sequence \(\langle G_{x,n} : n \in \omega \rangle\) is descending.
    Then, for \(F \in [X]^{<\omega}\) and \(n \in \omega\), we define \(G_{F,n} = \bigcup \{ G_{x,n} : x \in F \}\).
    Note that the sequence \(\langle G_{F,n} : n \in \omega\rangle\) is also descending.

    We define a choice function \(\gamma : \Omega_X \times \mathrm{PR}(X) \to \mathscr T_X\) by choosing
    \(\gamma(\mathscr U, F) \in \mathscr U\) to be so that \(F \subseteq \gamma(\mathscr U , F)\).
    For \(\mathscr U \in \Omega_X\), \(n \in \omega\), and \(F \in \mathrm{PR}(X)\), let
    \[\mathbf W(F,\mathscr U,n) = [F,\gamma(\mathscr U,F) \cap G_{F,n}].\]
    Then, for \(\mathscr U \in \Omega_X\) and \(n\in\omega\), define
    \[\mathscr V_{\mathscr U,n} = \{ \mathbf W(F,\mathscr U,n) : F \in \mathrm{PR}(X) \}.\]

    Then we can define \(\overrightarrow{T}_{\mathrm{I},n} : \Omega_X \to \mathrm{Gal}(\mathrm{id},\mathcal O_{\mathrm{PR}(X)})\) by the rule
    \[\overrightarrow{T}_{\mathrm{I},n}(\mathscr U) = \mathrm{Cons}(\mathscr V_{\mathscr U, n},\mathscr V_{\mathscr U, n}).\]

    Fix \(\mathscr U \in \Omega_X\) and \(n \in \omega\).
    Then, for \(W \in \mathrm{Cons}(\mathscr V_{\mathscr U, n},\mathscr V_{\mathscr U, n})\),
    consider \[\{ F \in \mathrm{PR}(X) : W = \mathrm{St}(\mathbf W(F,\mathscr U,n) , \mathscr V_{\mathscr U,n}) \}.\]
    We can define a choice function \(\chi_{\mathscr U, n} : \mathrm{Cons}(\mathscr V_{\mathscr U, n},\mathscr V_{\mathscr U, n}) \to \mathrm{PR}(X)\)
    to be so that \(\chi_{\mathscr U, n}(W) \in \mathrm{PR}(X)\) satisfies
    \[W = \mathrm{St}(\mathbf W(\chi_{\mathscr U, n}(W) , \mathscr U, n) , \mathscr V_{\mathscr U, n}).\]

    Since \(\mathrm{PR}(X)\) is regular, Corollary \ref{cor:RegularHasNiceGalaxy} asserts that
    \(\bigcup \mathrm{Gal}(\mathrm{id}, \mathcal O_{\mathrm{PR}(X)}) = \mathscr T_{\mathrm{PR(X)}}\).
    So we define \(\overrightarrow{T}_{\mathrm{II},n} : \mathscr T_{\mathrm{PR}(X)} \times \Omega_X \to \mathscr T_X\),
    in light of Remark \ref{rmk:TranslationConvention}, as follows:
    if \(W \in \mathrm{Cons}(\mathscr V_{\mathscr U, n},\mathscr V_{\mathscr U, n})\),
    let \[\overrightarrow{T}_{\mathrm{II},n}(W,\mathscr U) = \gamma(\mathscr U, \chi_{\mathscr U,n}(W)).\]

    Now, using Corollary \ref{cor:starMenger}, assume we have sequences \(\langle \mathscr U_n : n \in \omega \rangle \in \Omega_X^\omega\) and
    \(\langle \mathscr F_n : n \in \omega \rangle\) so that
    \(\mathscr F_n \in \left[\overrightarrow{T}_{\mathrm{I},n}(\mathscr U_n)\right]^{<\omega}\) for each \(n \in \omega\) and, for every \(F \in\mathrm{PR}(X)\) and
    every \(m \in \omega\), there exists \(n \geq m\) and \(W \in \mathscr F_n\) so that \(F \in W\).
    %Then, set \(\mathscr G_n = \{ \chi_{\mathscr U_n,n}(W) : W \in \mathscr F_n \}\).
    To show that \[\bigcup_{n\in\omega}\left\{ \overrightarrow{T}_{\mathrm{II},n}(W,\mathscr U_n) : W \in \mathscr F_n \right\} \in \Omega_X,\]
    let \(F \in [X]^{<\omega}\) be arbitrary.
    Then we can let \(\langle \ell_n : n \in \omega \rangle\) be cofinal in \(\omega\)
    and, for each \(n\in\omega\), choose \(W_n \in \mathscr F_{\ell_n}\) so that
    \(F \in W_{n}\) for all \(n \in \omega\).
    To simplify notation, let \(F_n = \chi_{\mathscr U_{\ell_n},\ell_n}(W_n)\).
    Then, for each \(n\in\omega\), there is \(E_n \in \mathrm{PR}(X)\) so that
    \[\mathbf W(E_n , \mathscr U_{\ell_n}, \ell_n) \cap
    \mathbf W\left( F_n, \mathscr U_{\ell_n} , \ell_n\right)
    \neq \emptyset\]
    and \(F \in \mathbf W(E_n , \mathscr U_{\ell_n}, \ell_n)\).
    Observe that \(E_n \subseteq F\) for each \(n \in \omega\) so,
    as the map \(n \mapsto E_n\), \(\omega \to \wp(F)\), is into a finite set,
    there must be some cofinal \(\langle k_n : n \in \omega\rangle\) in
    \(\omega\) so that \(\langle E_{k_n} : n \in \omega \rangle\) is constant.
    Hence, let \(E \in \mathrm{PR}(X)\) be so that \(E = E_{k_n}\) for all \(n \in\omega\).
    Then, observe that \(F \subseteq \gamma(\mathscr U_{\ell_{k_n}},E) \cap G_{E,\ell_{k_n}}\) for all \(n \in \omega\).
    Thus, since the \(G_{E,n}\) are descending, \(F = E\).

    Now, behold that, as
    \[\mathbf W(E_{k_0} , \mathscr U_{\ell_{k_0}}, \ell_{k_0}) \cap
    \mathbf W\left( F_{{k_0}}, \mathscr U_{\ell_{k_0}} , \ell_{k_0}\right)
    \neq \emptyset,\]
    there is some \(H \in \mathrm{PR}(X)\) so that
    \[H \in \left[E_{k_0} , \gamma(\mathscr U_{\ell_{k_0}},E_{k_0}) \cap G_{E_{k_0},\ell_{k_0}}\right] \cap
    \left[F_{{k_0}} , \gamma(\mathscr U_{\ell_{k_0}} , F_{{k_0}} ) \cap G_{{F_{{k_0}}},\ell_{k_0}} \right].\]
    Hence,
    \begin{align*}
        F = E = E_{k_0} \subseteq H &\subseteq
        \gamma(\mathscr U_{\ell_{k_0}} , F_{{k_0}} ) \cap G_{{F_{{k_0}}},\ell_{k_0}}\\
        &\subseteq \gamma(\mathscr U_{\ell_{k_0}} , F_{{k_0}} )\\
        &= \overrightarrow{T}_{\mathrm{II},\ell_{k_0}}(W_{{k_0}},\mathscr U_{\ell_{k_0}}).
    \end{align*}
    Therefore, Corollary \ref{cor:Translation} applies, finishing the proof.
\end{proof}

Let \(\mathcal P_{\mathrm{fin}}(X)\) represent the set \([X]^{<\omega}\) with the subspace topology
induced by the Vietoris topology on compact subsets of \(X\) (see \cite{MichaelHyperspaces} for more on the Vietoris topology).
As noted in \cite{DouwenPR}, the topology on \(\mathrm{PR}(X)\) is finer than the topology on \(\mathcal P_{\mathrm{fin}}(X)\).
\begin{corollary}
    For any regular space \(X\) and \(\square \in \{1,\mathrm{fin}\}\),
    \[
    \mathsf G_\square^*(\mathcal O_{\mathrm{PR}(X)},\mathcal O_{\mathrm{PR}(X)}) \leq_{\mathrm{II}}
    \mathsf G_\square(\mathcal O_{\mathcal P_{\mathrm{fin}(X)}},\mathcal O_{\mathcal P_{\mathrm{fin}(X)}}).
    \]
\end{corollary}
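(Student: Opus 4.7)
The plan is to chain Theorem~\ref{thm:FirstBound} with a known game-theoretic equivalence between $\omega$-cover selection games on $X$ and open-cover selection games on the Vietoris hyperspace $\mathcal P_{\mathrm{fin}}(X)$. First, Theorem~\ref{thm:FirstBound} delivers
\[
\mathsf G_\square^*(\mathcal O_{\mathrm{PR}(X)}, \mathcal O_{\mathrm{PR}(X)}) \leq_{\mathrm{II}} \mathsf G_\square(\Omega_X, \Omega_X),
\]
using the assumed regularity of $X$. Second, the equivalence
\[
\mathsf G_\square(\Omega_X, \Omega_X) \equiv \mathsf G_\square(\mathcal O_{\mathcal P_{\mathrm{fin}}(X)}, \mathcal O_{\mathcal P_{\mathrm{fin}}(X)})
\]
is the game-theoretic lift of the classical Gerlits--Nagy-type correspondence between $\omega$-covers of $X$ and open covers of its Vietoris hyperspace of finite subsets, as established in \cite{CHVietoris}. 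Transitivity of $\leq_{\mathrm{II}}$ is built into its definition, so composing these two links yields the claim.

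If one prefers a self-contained argument for the second link, Corollary~\ref{cor:Translation} applies. Given an open cover $\mathscr W$ of $\mathcal P_{\mathrm{fin}}(X)$, for each $F \in [X]^{<\omega}$ one fixes a basic Vietoris neighborhood $\langle U^F_1, \ldots, U^F_{k_F}\rangle \ni F$ that is contained in some $W_F \in \mathscr W$. Setting $\overleftarrow{T}_{\mathrm{I}}(\mathscr W) = \{\bigcup_i U^F_i : F \in [X]^{<\omega}\}$, which is an $\omega$-cover of $X$, and translating Two's selection $\bigcup_i U^{F_0}_i$ back to $W_{F_0}$, one obtains the translation functions demanded by Corollary~\ref{cor:Translation}.

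The main obstacle in the self-contained approach is verifying that a winning $\omega$-cover play $\{\bigcup_i U^{F_n}_i : n \in \omega\}$ on $X$ induces a winning cover $\{W_{F_n} : n \in \omega\}$ of $\mathcal P_{\mathrm{fin}}(X)$: for arbitrary $G \in [X]^{<\omega}$, the $\omega$-cover property supplies some $n$ with $G \subseteq \bigcup_i U^{F_n}_i$, but $G \in W_{F_n}$ requires the stricter Vietoris condition $G \cap U^{F_n}_i \neq \emptyset$ for each $i$. The standard resolution, carried out in \cite{CHVietoris}, is to enrich the translated $\omega$-cover so that the relevant memberships are forced. Since that bookkeeping is already in the literature, the most efficient exposition is simply to invoke the equivalence from \cite{CHVietoris} and let transitivity finish the job.
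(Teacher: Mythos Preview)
Your proof is correct and follows the same route as the paper: invoke Theorem~\ref{thm:FirstBound} to obtain $\mathsf G_\square^*(\mathcal O_{\mathrm{PR}(X)},\mathcal O_{\mathrm{PR}(X)}) \leq_{\mathrm{II}} \mathsf G_\square(\Omega_X,\Omega_X)$, then cite the equivalence $\mathsf G_\square(\Omega_X,\Omega_X) \leftrightarrows \mathsf G_\square(\mathcal O_{\mathcal P_{\mathrm{fin}}(X)},\mathcal O_{\mathcal P_{\mathrm{fin}}(X)})$ from \cite[Thm.~4.8]{CHVietoris}, and conclude by transitivity. Your additional discussion of a self-contained translation argument (and its obstacle) is extraneous to the corollary but accurate in identifying why one defers to \cite{CHVietoris}.
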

\begin{proof}
    By \cite[Thm. 4.8]{CHVietoris},
    \[\mathsf G_\square(\Omega_X,\Omega_X) \leftrightarrows \mathsf G_\square(\mathcal O_{\mathcal P_{\mathrm{fin}}(X)},\mathcal O_{\mathcal P_{\mathrm{fin}}(X)}).\]
    So then Theorem \ref{thm:FirstBound} applies to yield
    \[
    \mathsf G_\square^*(\mathcal O_{\mathrm{PR}(X)},\mathcal O_{\mathrm{PR}(X)}) \leq_{\mathrm{II}}
    \mathsf G_\square(\mathcal O_{\mathcal P_{\mathrm{fin}(X)}},\mathcal O_{\mathcal P_{\mathrm{fin}(X)}}).
    \]
\end{proof}

In the next example, we see that the inequality in Theorem \ref{thm:FirstBound} cannot, in general, be reversed.
\begin{example}
By \cite[Cor. 4.8]{SakaiStarVersionsMenger}, \(\mathrm{PR}(\mathbb R)\) is not star-Menger.
Since \(\mathbb R\) is \(\omega\)-Menger (which follows from \(\sigma\)-compactness in every finite power, see \cite{JustMillerScheepers}),
this shows that, even when points are \(G_\delta\), the inequality in Theorem \ref{thm:FirstBound} may not reverse.
\end{example}

Due to the use of Proposition \ref{prop:AnotherPseudocharacterBound} in the proof of Theorem \ref{thm:FirstBound},
one may expect an analogous argument to establish an analog to Theorem \ref{thm:FirstBound} for \(\mathrm{PR}_{\mathbb K}(X)\).
However, an important part in the final steps of the argument for the finite subsets case involving isolating a cofinal sequence
on which the \(E_{k_n}\) are constant does not seem to have remedy in the compact case.
We hence leave this as Question \ref{question:PRAnalog} in Section \ref{sec:Questions}.

In general, however,
\[\mathsf G_\square(\mathcal K_X,\mathcal K_X) \leq_{\mathrm{II}}
\mathsf G^\ast_\square(\mathcal O_{\mathrm{PR}_{\mathbb K}(X)},\mathcal O_{\mathrm{PR}_{\mathbb K}(X)})\]
fails, as witnessed by the following example.
\begin{example}
    Let \(X\) be the one-point Lindel{\"{o}}fication of discrete \(\omega_1\).
    Then, by \cite[Ex. 3.24]{CHCompactOpen} and the game duality results of \cite{ClontzDualSelection},
    \(\mathrm{II} \uparrow G_1(\mathcal K_X, \mathcal K_X).\)
    Note that \(\mathrm{PR}_{\mathbb K}(X) = \mathrm{PR}(X)\) since the compact subsets of \(X\)
    are exactly the finite subsets.
    Since \(\{\{\alpha\} : \alpha \in \omega_1 \}\) is a discrete family of open sets,
    we see that \(\mathrm{PR}_{\mathbb K}(X)\) is not star-Lindel{\"{o}}f by Corollary \ref{cor:PRAXStarLindelof}.
    In particular,
    \(\mathrm{II} \not\uparrow \mathsf G^\ast_\square(\mathcal O_{\mathrm{PR}_{\mathbb K}(X)},\mathcal O_{\mathrm{PR}_{\mathbb K}(X)}).\)
\end{example}

\section{Star-Selection Games in Uniform Spaces}

We first recall a few key facts about uniformities.
For more on uniformities and uniform spaces, the authors recommend \cite{Engelking,IsbellUniformSpaces,Kelley}.

\begin{definition}
    Given a set \(X\), a collection \(\mathcal E\) of sets containing the diagonal
    \(\Delta = \{(x,x) : x \in X\}\) is called a \emph{uniformity on \(X\)} if,
    \begin{enumerate}
        \item for all \(E \in \mathcal E\), \(E^{-1} \in \mathcal E\);
        \item for all \(E,F \in \mathcal E\), \(E \cap F \in \mathcal E\);
        \item for all \(E \in \mathcal E\), there is an \(F \in \mathcal E\) so that \(F \circ F \subseteq E\); and
        \item for all \(E \in \mathcal E\), if \(E \subseteq F\), then \(F \in \mathcal E\).
    \end{enumerate}
    Each \(E \in \mathcal E\) is called an \emph{entourage}.
    We refer to \((X,\mathcal E)\) as a uniform space.
\end{definition}

\begin{definition}
    The topology generated by a uniformity \(\mathcal E\) on a set \(X\) is
    \[\mathscr T = \{ U \subseteq X : (\forall x \in U)(\exists E \in \mathcal E)\ E[x] \subseteq U \}\]
    where \(E[x] = \{ y \in X : \langle x, y \rangle \in E \}.\)
\end{definition}

\begin{definition}
    Given a uniform space \((X,\mathcal E)\) and a collection \(\mathscr U\) of subsets of \(X\), \(\mathscr U\)
    is a \emph{uniform cover} of \(X\) (with respect to \(\mathcal E\))
    if there exists \(E \in \mathcal E\) so that \(\{ E[x] : x \in X \}\) is a refinement of \(\mathscr U\).
    We will say a uniform cover is an open uniform cover if it consists of open sets.
    Let \(\mathcal C_{\mathcal E}(X)\) be the collection of all open uniform covers with respect to \(\mathcal E\).
\end{definition}
If \((X,\mathcal E)\) is a uniform space, then
\begin{itemize}
    \item
    \(\mathsf S_{\mathrm{fin}}(\mathcal C_{\mathcal E}(X),\mathcal O_X)\) is known as the
    \emph{uniform-Menger} (with respect to \(\mathcal E\)) property and
    \item
    \(\mathsf S_{1}(\mathcal C_{\mathcal E}(X),\mathcal O_X)\) is known as the
    \emph{uniform-Rothberger} (with respect to \(\mathcal E\)) property.
\end{itemize}

Recall that a collection \(\mathscr U\) \emph{star-refines} a collection \(\mathscr V\) if
\(\{ \mathrm{St}(U,\mathscr U) : U \in \mathscr U\}\) refines \(\mathscr V\).

The following can be established using the comments in \cite[p. 427]{Engelking} along with the fact that entourages that are
open in \(X^2\) with the induced topology form a base for the uniformity \cite[Thm. 6.6]{Kelley}.
\begin{proposition} \label{prop:UniformCovers}
    If \(\mathcal E\) is a uniformity on a non-empty set \(X\), then \(\mathcal C_{\mathcal E}(X)\) has the following properties:
     \begin{itemize}
        \item
        If \(\mathscr U \in \mathcal C_{\mathcal E}(X)\) and \(\mathscr U\) is a refinement of an open cover \(\mathscr V\), then \(\mathscr V \in \mathcal C_{\mathcal E}(X)\).
        \item
        For any \(\mathscr U_1 , \mathscr U_2 \in \mathcal C_{\mathcal E}(X)\), there exists \(\mathscr V \in \mathcal C_{\mathcal E}(X)\) so that \(\mathscr V\) refines
        both \(\mathscr U_1\) and \(\mathscr U_2\).
        \item
        For every \(\mathscr U \in \mathcal C_{\mathcal E}(X)\), there exists \(\mathscr V \in \mathcal C_{\mathcal E}(X)\) so that \(\mathscr V\) is a star-refinement of \(\mathscr U\).
        \item
        For any \(x,y \in X\) with \(x \neq y\), there is a \(\mathscr U \in \mathcal C_{\mathcal E}(X)\) so that,
        for all \(U \in \mathscr U\), \(x \in U \implies y \notin U\) and \(y \in U \implies x \notin U\).
    \end{itemize}
\end{proposition}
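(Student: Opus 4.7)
The plan is to verify the four items by unpacking the definition of a uniform cover and leveraging the hint that the open entourages form a base for $\mathcal E$. Throughout, for each $\mathscr U \in \mathcal C_{\mathcal E}(X)$ I fix an entourage $E_{\mathscr U} \in \mathcal E$ such that $\{E_{\mathscr U}[x] : x \in X\}$ refines $\mathscr U$. For the first item I would simply apply transitivity of refinement: if $\{E_{\mathscr U}[x]\}$ refines $\mathscr U$ and $\mathscr U$ refines $\mathscr V$, then $\{E_{\mathscr U}[x]\}$ refines $\mathscr V$, and since $\mathscr V$ is open by hypothesis this gives $\mathscr V \in \mathcal C_{\mathcal E}(X)$. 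For the second item I would take $E = E_{\mathscr U_1} \cap E_{\mathscr U_2} \in \mathcal E$ and then pick an open entourage $F \subseteq E$ using the cited base; since the slice $F[x] = \{y : (x,y) \in F\}$ of an open subset of $X^2$ is open in $X$, the collection $\mathscr V = \{F[x] : x \in X\}$ is an open uniform cover witnessed by $F$, and $F[x] \subseteq E_{\mathscr U_i}[x]$ shows $\mathscr V$ refines each $\mathscr U_i$.

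The main technical step is the third item. Starting from $\mathscr U \in \mathcal C_{\mathcal E}(X)$ with witnessing entourage $E_{\mathscr U}$, I would iterate the composition axiom to produce a symmetric open entourage $F$ with $F \circ F \circ F \subseteq E_{\mathscr U}$: pick $G \in \mathcal E$ with $G \circ G \subseteq E_{\mathscr U}$, pick $H \in \mathcal E$ with $H \circ H \subseteq G$, and then replace $H$ by a symmetric open entourage $F \subseteq H \cap H^{-1}$ drawn from the base of open entourages. Setting $\mathscr V = \{F[x] : x \in X\}$, I would show $\mathrm{St}(F[x], \mathscr V) \subseteq E_{\mathscr U}[x]$ for each $x$: if $F[y] \cap F[x] \neq \emptyset$ and $w \in F[y]$, pick $z$ in the intersection, so that $(x,z), (y,z), (y,w) \in F$; symmetry of $F$ yields $(z,y) \in F$, whence $(x,w) \in F \circ F \circ F \subseteq E_{\mathscr U}$. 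Since $E_{\mathscr U}[x]$ is contained in some member of $\mathscr U$, this shows $\mathscr V$ star-refines $\mathscr U$, and $\mathscr V$ is open because $F$ is.

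For the last item I would use the Hausdorff assumption on $X$, which forces $\bigcap \mathcal E = \Delta$, so there is some $E \in \mathcal E$ with $(x,y) \notin E$. Choosing a symmetric open $F \in \mathcal E$ with $F \circ F \subseteq E$, the cover $\mathscr U = \{F[z] : z \in X\} \in \mathcal C_{\mathcal E}(X)$ separates $x$ and $y$: if some $F[z]$ contained both, then $(z,x), (z,y) \in F$ combined with symmetry of $F$ would give $(x,y) \in F \circ F \subseteq E$, contradicting the choice of $E$. The only recurring bookkeeping is keeping every chosen entourage open so that the resulting covers genuinely lie in $\mathcal C_{\mathcal E}(X)$ rather than being merely (not necessarily open) uniform covers; the open-entourage base cited from Kelley handles this uniformly across all four items.
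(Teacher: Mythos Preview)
Your proof is correct. The paper itself does not give a detailed argument for this proposition: it simply remarks that the four properties ``can be established using the comments in \cite[p.~427]{Engelking} along with the fact that entourages that are open in \(X^2\) with the induced topology form a base for the uniformity \cite[Thm.~6.6]{Kelley}.'' You have carried out precisely the verification that the paper defers to those references, and the key tool you use throughout---passing to an open entourage inside a given one so that the resulting slice cover lies in \(\mathcal C_{\mathcal E}(X)\)---is exactly the Kelley fact the paper invokes. One small note: the fourth item indeed needs the standing Hausdorff hypothesis from the paper's preliminaries (equivalently \(\bigcap \mathcal E = \Delta\)), which you correctly flag; the proposition as stated for a bare ``non-empty set \(X\)'' would fail without it.
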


The following follows from \cite[Prop. 8.1.16]{Engelking}.
\begin{proposition} \label{prop:EngelkingUniform}
    Suppose \(X\) is a topological space.
    If \(X\) admits a collection of open covers \(\mathcal C\) so that the conditions of Proposition \ref{prop:UniformCovers} hold,
    then \(X\) admits a uniformity \(\mathcal E\) so that \(\mathcal C = \mathcal C_{\mathcal E}(X)\).
\end{proposition}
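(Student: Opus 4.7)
The plan is to construct the uniformity explicitly from $\mathcal C$ by associating to each cover an entourage, and then verify that the collection of these entourages generates a uniformity whose open uniform covers are exactly $\mathcal C$. For each $\mathscr U \in \mathcal C$, define
\[E_{\mathscr U} = \bigcup\{U \times U : U \in \mathscr U\} \subseteq X \times X,\]
and let $\mathcal B = \{E_{\mathscr U} : \mathscr U \in \mathcal C\}$. Then take $\mathcal E$ to be the upward closure of $\mathcal B$ under supersets in $X \times X$.

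First I would verify that $\mathcal B$ is a base for a uniformity. Each $E_{\mathscr U}$ contains the diagonal since $\mathscr U$ covers $X$, and each $E_{\mathscr U}$ is symmetric because the defining sets $U \times U$ are symmetric. Given $\mathscr U_1, \mathscr U_2 \in \mathcal C$, the second bullet of Proposition \ref{prop:UniformCovers} provides $\mathscr V \in \mathcal C$ refining both, and then $E_{\mathscr V} \subseteq E_{\mathscr U_1} \cap E_{\mathscr U_2}$. For the composition axiom, given $\mathscr U \in \mathcal C$, take $\mathscr V \in \mathcal C$ that star-refines $\mathscr U$: if $(x,y) \in E_{\mathscr V} \circ E_{\mathscr V}$, pick $z$ with $(x,z),(z,y) \in E_{\mathscr V}$, giving $V_1, V_2 \in \mathscr V$ with $x,z \in V_1$ and $z,y \in V_2$; then $V_1 \cup V_2 \subseteq \mathrm{St}(V_1,\mathscr V) \subseteq U$ for some $U \in \mathscr U$, so $(x,y) \in U \times U \subseteq E_{\mathscr U}$. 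Thus $E_{\mathscr V} \circ E_{\mathscr V} \subseteq E_{\mathscr U}$.

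Next I would show that the topology induced by $\mathcal E$ agrees with the original topology on $X$. For $U \in \mathscr T_X$ and $x \in U$, since $X$ is a topological space and $\mathscr U$ consists of open sets, each $\mathscr U \in \mathcal C$ yields a neighborhood $E_{\mathscr U}[x] = \bigcup\{V \in \mathscr U : x \in V\}$ of $x$; using a member of $\mathcal C$ that refines the open cover $\{U, X\setminus\{x\}\}$ (available from bullets one and four of Proposition \ref{prop:UniformCovers}, appropriately combined), one obtains an entourage-neighborhood of $x$ contained in $U$. Conversely, $E_{\mathscr U}[x]$ is already a union of original open sets. The fourth bullet also guarantees that the induced topology is Hausdorff and that the constructed uniformity is separating.

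Finally I would verify that $\mathcal C = \mathcal C_{\mathcal E}(X)$. One inclusion is immediate: if $\mathscr U \in \mathcal C$, then $\{E_{\mathscr U}[x] : x \in X\}$ refines $\mathscr U$ because $E_{\mathscr U}[x]$ is contained in any $U \in \mathscr U$ containing $x$ (in fact equals their union), and after fixing a star-refinement $\mathscr V$ of $\mathscr U$, the cover $\{E_{\mathscr V}[x]\}$ refines $\mathscr U$, so $\mathscr U \in \mathcal C_{\mathcal E}(X)$. Conversely, if $\mathscr U$ is an open uniform cover, then some $E_{\mathscr V}$ with $\mathscr V \in \mathcal C$ refines $\mathscr U$ via its $x$-sections, so $\mathscr V$ itself refines $\mathscr U$; then the first bullet of Proposition \ref{prop:UniformCovers} forces $\mathscr U \in \mathcal C$. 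The main obstacle I anticipate is the composition verification, where one must carefully exploit the star-refinement property rather than ordinary refinement; the rest amounts to unwinding definitions.
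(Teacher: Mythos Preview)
The paper does not prove this proposition; it merely cites Engelking's Proposition 8.1.16. Your construction via the entourages \(E_{\mathscr U}=\bigcup\{U\times U:U\in\mathscr U\}\) is precisely the standard covering description of a uniformity that Engelking develops, so at the level of overall approach you are on the same track as the cited reference. Your verifications that \(\mathcal B=\{E_{\mathscr U}:\mathscr U\in\mathcal C\}\) is a base for a uniformity, and that \(\mathcal C=\mathcal C_{\mathcal E}(X)\), are correct (the momentary slip where you write ``\(E_{\mathscr U}[x]\) is contained in any \(U\in\mathscr U\) containing \(x\)'' is immediately repaired by passing to a star-refinement, so no harm done).

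There is, however, a genuine gap in your topology paragraph. You claim that bullets one and four of Proposition~\ref{prop:UniformCovers} yield a member of \(\mathcal C\) refining \(\{U,X\setminus\{x\}\}\). Bullet one says that \(\mathcal C\) is \emph{upward} closed under refinement (if a member of \(\mathcal C\) refines an open cover, that cover lies in \(\mathcal C\)); this goes in the wrong direction and cannot produce a refinement of a prescribed cover. Bullet four only separates pairs of points. In fact, the four listed conditions do not force the induced uniform topology to coincide with the given topology of \(X\): one can satisfy all four bullets with a family \(\mathcal C\) whose induced topology is strictly coarser. So your argument for topology agreement does not go through as written, and cannot be salvaged from those hypotheses alone. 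Fortunately, the equality \(\mathcal C=\mathcal C_{\mathcal E}(X)\) does not actually require this step, and in the paper's only application (Theorem~\ref{thm:MengerParacompact}) one takes \(\mathcal C=\mathcal O_X\), where compatibility is automatic. You should simply drop the topology-compatibility paragraph.
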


The following captures \cite[Thm. 1, Thm. 20]{KocinacUniform}, but extends
them to the other strategy types considered in this paper.
\begin{theorem} \label{thm:MainUniformTheorem}
    Let \((X,\mathcal E)\) be a uniform space. Then
    \[
    \mathsf G_{\square}(\mathcal C_{\mathcal E}(X), \mathcal O_X)
    \leftrightarrows \mathsf {SG}_{X, \square}^*(\mathcal C_{\mathcal E}(X), \mathcal O_X)
    \leftrightarrows \mathsf G_{\square}^\ast(\mathcal C_{\mathcal E}(X), \mathcal O_X).
    \]
\end{theorem}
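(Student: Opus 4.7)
My plan is to establish the two bi-directional equivalences
\[\mathsf G_{\square}(\mathcal C_{\mathcal E}(X), \mathcal O_X) \leftrightarrows \mathsf {SG}_{X, \square}^*(\mathcal C_{\mathcal E}(X), \mathcal O_X) \quad \text{and} \quad \mathsf G_{\square}(\mathcal C_{\mathcal E}(X), \mathcal O_X) \leftrightarrows \mathsf G_{\square}^\ast(\mathcal C_{\mathcal E}(X), \mathcal O_X)\]
separately, each split into two applications of Proposition \ref{prop:Monotonicity}. Because the target family in every case is $\mathcal O_X$ and $\bigcup \mathcal B$ consists of open subsets of $X$ throughout, the hypothesis on $(\mathcal C, \mathcal D) = (\mathcal O_X, \mathcal O_X)$ in Proposition \ref{prop:Monotonicity} is automatic. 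So each of the four $\leq^+_{\mathrm{II}}$ inequalities reduces to a single refinement check between elements of $\mathcal C_{\mathcal E}(X)$ and the relevant galaxy.

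For the two ``ordinary $\leq^+_{\mathrm{II}}$ star'' directions, I would take $\mathcal A = \mathcal C_{\mathcal E}(X)$ and let $\mathcal B$ be $\mathrm{Gal}(X, \mathcal C_{\mathcal E}(X))$ in one case and $\mathrm{Gal}(\mathrm{id}, \mathcal C_{\mathcal E}(X))$ in the other. Given $\mathscr U \in \mathcal C_{\mathcal E}(X)$, the cover $\mathscr U$ itself refines both $\{\mathrm{St}(x,\mathscr U) : x \in X\}$ and $\{\mathrm{St}(U,\mathscr U) : U \in \mathscr U\}$: for the first, any $U \in \mathscr U$ contains some $x$, and then $U \subseteq \mathrm{St}(x,\mathscr U)$; for the second, $U \subseteq \mathrm{St}(U,\mathscr U)$ trivially. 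Proposition \ref{prop:Monotonicity} then delivers the inequality in each case.

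For the two ``star $\leq^+_{\mathrm{II}}$ ordinary'' directions, I would swap the roles, setting $\mathcal B = \mathcal C_{\mathcal E}(X)$ and letting $\mathcal A$ be one of the galaxies. The key ingredient is the star-refinement clause of Proposition \ref{prop:UniformCovers}: for each $\mathscr U \in \mathcal C_{\mathcal E}(X)$ there exists $\mathscr V \in \mathcal C_{\mathcal E}(X)$ that star-refines $\mathscr U$. Then both $\{\mathrm{St}(x,\mathscr V) : x \in X\}$ and $\{\mathrm{St}(V,\mathscr V) : V \in \mathscr V\}$ refine $\mathscr U$: for any $x \in V \in \mathscr V$, one has $\mathrm{St}(x,\mathscr V) \subseteq \mathrm{St}(V,\mathscr V) \subseteq U$ for some $U \in \mathscr U$, which handles both galaxy forms simultaneously.

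The only substantive content is the star-refinement property, which is precisely the feature of uniformities that collapses one layer of the star operation into ordinary refinement. Once that is used, Proposition \ref{prop:Monotonicity} handles both $\square \in \{1,\mathrm{fin}\}$ and transfers every strategic strength, including the constant-strategy variant, so I do not expect any genuine obstacle beyond naming the right choice of $\mathcal A$ and $\mathcal B$ in each of the four applications.
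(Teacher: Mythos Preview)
Your approach is correct and is a genuinely cleaner route than the paper's. The paper argues via a three-step cycle
\[
\mathsf G_{\square}(\mathcal C_{\mathcal E}(X), \mathcal O_X)
\leq^+_{\mathrm{II}} \mathsf {SG}_{X, \square}^*(\mathcal C_{\mathcal E}(X), \mathcal O_X)
\leq^+_{\mathrm{II}} \mathsf G_{\square}^\ast(\mathcal C_{\mathcal E}(X), \mathcal O_X)
\leq^+_{\mathrm{II}} \mathsf G_{\square}(\mathcal C_{\mathcal E}(X), \mathcal O_X),
\]
and for each leg it invokes the heavier Theorem \ref{thm:RelationalTranslation} or Corollary \ref{cor:Translation}, explicitly building the maps $\overleftarrow{R}_{\mathrm{I}}$, $\overrightarrow{R}_{\mathrm{II}}$ (or $\overleftarrow{T}_{\mathrm{I}}$, $\overrightarrow{T}_{\mathrm{II}}$) by hand. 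You instead observe that every one of these transitions is a pure refinement phenomenon: either $\mathscr U$ refines its own constellation, or a star-refinement $\mathscr V$ of $\mathscr U$ yields a constellation refining $\mathscr U$. That places all four inequalities under the single umbrella of Proposition \ref{prop:Monotonicity}, with the $(\mathcal C,\mathcal D)=(\mathcal O_X,\mathcal O_X)$ hypothesis automatic because the relevant $\bigcup\mathcal B$ consists of open sets. The paper's cycle saves one inequality (three instead of four), but each of yours is a one-line refinement check rather than a bespoke translation; the substantive content---the star-refinement clause of Proposition \ref{prop:UniformCovers}---is the same in both arguments.
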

\begin{proof}
    We first show that \(\mathsf G_{\square}(\mathcal C_{\mathcal E}(X), \mathcal O_X) \leq^+_{\mathrm{II}}
   \mathsf {SG}_{X, \square}^*(\mathcal C_{\mathcal E}(X), \mathcal O_X)\).
    Define \(\overleftarrow{R}_{\mathrm{I}} : \mathrm{Gal}(X, \mathcal C_{\mathcal E}(X)) \to \mathcal P\left( \mathcal C_{\mathcal E}(X) \right)\) by
    \[
    \overleftarrow{R}_{\mathrm{I}}(\mathrm{Cons}(X,\mathscr U)) = \{ \mathscr V \in \mathcal C_{\mathcal E}(X) : \mathrm{Cons}(X,\mathscr U) = \mathrm{Cons}(X,\mathscr V)\}.
    \]
    For each \(U \in \mathscr T_X\), choose \(\gamma(U) \in U\) and, for \(V \in \mathscr V \in  \overleftarrow{R}_{\mathrm{I}}(\mathrm{Cons}(X,\mathscr U))\), we define
    \[
    \overrightarrow{R}_{\mathrm{II}}(V, \mathscr V, \mathrm{Cons}(X,\mathscr U)) = \mathrm{St}(\gamma(V), \mathscr V).
    \]
    Clearly, \(\overleftarrow{R}_{\mathrm{I}}(\mathrm{Cons}(X,\mathscr U)) \neq \emptyset\) and is a subset of \(\mathcal C_{\mathcal E}(X)\).
    When
    \[
    V \in \mathscr V \in  \overleftarrow{R}_{\mathrm{I}}(\mathrm{Cons}(X,\mathscr U)),
    \]
    we know that \(\mathrm{Cons}(X,\mathscr U) = \mathrm{Cons}(X,\mathscr V)\) and so \(\mathrm{St}(\gamma(V), \mathscr V) \in \mathrm{Cons}(X,\mathscr U)\).
    Finally, suppose \(\mathcal F_n \subseteq \mathscr V_n \in  \overleftarrow{R}_{\mathrm{I}}(\mathrm{Cons}(X,\mathscr U_n))\)
    are finite so that \(\bigcup_n \mathcal F_n\) is an open cover of \(X\).
    For each \(n\in\omega\) and each \(V \in \mathcal F_n\), \(\gamma(V) \in V \in \mathscr V_n\),
    so we can conclude that \(V \subseteq \mathrm{St}(\gamma(V), \mathscr V_n)\).
    Thus
    \[
    \left\{\overrightarrow{R}_{\mathrm{II}}(V, \mathscr V_n, \mathrm{Cons}(X,\mathscr U_n)) : n \in \omega \wedge V \in \mathcal F_n\right\}
    \]
    forms an open cover as well.
    Hence, Theorem \ref{thm:RelationalTranslation} applies.

    Next, we show that \(\mathsf {SG}_{X, \square}^*(\mathcal C_{\mathcal E}(X), \mathcal O_X)
    \leq^+_{\mathrm{II}} \mathsf G^\ast_{\square}(\mathcal C_{\mathcal E}(X), \mathcal O_X)\).
    To see this, define \(\overleftarrow{R}_{\mathrm{I}} : \mathrm{Gal}(\mathrm{id},\mathcal C_{\mathcal E}(X)) \to \mathcal P(\mathrm{Gal}(X,\mathcal C_{\mathcal E}(X))\)
    by
    \[
    \overleftarrow{R}_{\mathrm{I}}(\mathrm{Cons}(\mathscr U,\mathscr U)) = \{\mathrm{Cons}(X,\mathscr V) :
    \mathscr V \in \mathcal C_{\mathcal E}(X) \wedge \mathrm{Cons}(\mathscr U,\mathscr U) = \mathrm{Cons}(\mathscr V,\mathscr V)\}.
    \]
    For each \(x \in X\) and \(\mathscr U \in \mathcal C_{\mathcal E}(X)\), let \(U_{x,\mathscr U} \in \mathscr U\) be so that \(x \in U_{x,\mathscr U}\).
    For
    \[
    \mathrm{St}(x,\mathscr V) \in \mathrm{Cons}(X,\mathscr V) \in \overleftarrow{R}_{\mathrm{I}}(\mathrm{Cons}(\mathscr U,\mathscr U)),
    \]
    define
    \[
    \overrightarrow{R}_{\mathrm{II}}(\mathrm{St}(x,\mathscr V),\mathrm{Cons}(X,\mathscr V),\mathrm{Cons}(\mathscr U,\mathscr U)) = \mathrm{St}(U_{x,\mathscr V},\mathscr V).
    \]
    Then if \(\mathrm{St}(x,\mathscr V) \in \mathrm{Cons}(\mathscr V,\mathscr V) \in \overleftarrow{R}_{\mathrm{I}}(\mathrm{Cons}(\mathscr U,\mathscr U))\), we know that
    \[
    \overrightarrow{R}_{\mathrm{II}}(\mathrm{St}(x,\mathscr V),\mathrm{Cons}(X,\mathscr V),\mathrm{Cons}(\mathscr U,\mathscr U)) \in \mathrm{Cons}(\mathscr U,\mathscr U).
    \]
    Also, if \(\{\mathrm{St}(F_n,\mathscr V_n) : n \in \omega\}\) forms a cover, where the \(F_n \subseteq X\) are finite, then, as
    \[
    \mathrm{St}(F_n,\mathscr V_n) \subseteq \bigcup_{x \in F_n}\mathrm{St}(U_{x,\mathscr V_n},\mathscr V_n),\]
    we know that
    \[\bigcup_{n\in\omega}\left\{ \overrightarrow{R}_{\mathrm{II}}(\mathrm{St}(x,\mathscr V_n),\mathrm{Cons}(X,\mathscr V_n),\mathrm{Cons}(\mathscr U_n,\mathscr U_n))
    : x \in F_n \right\}\]
    also forms a cover of \(X\).
    So Theorem \ref{thm:RelationalTranslation} applies.

    Finally, we show that \(\mathsf G^\ast_{\square}(\mathcal C_{\mathcal E}(X), \mathcal O_X)
    \leq^+_{\mathrm{II}} \mathsf G_{\square}(\mathcal C_{\mathcal E}(X), \mathcal O_X)\).
    For each \(\mathscr U \in \mathcal C_{\mathcal E}(X)\), use the properties of uniform covers to choose \(\mathscr U^\star \in \mathcal C_{\mathcal E}(X)\)
    which is a star-refinement of \(\mathscr U\).
    As noted in Remark \ref{rmk:Choice}, we can then define
    \(\alpha_{\mathscr U}:\mathscr U^\star \to \mathscr U\) to be so that \(\mathrm{St}(U,\mathscr U^\star) \subseteq \alpha_{\mathscr U}(U)\)
    for all \(U \in \mathscr U^\star\)
    and \(\alpha_{\mathscr U}(U) = \alpha_{\mathscr U}(V)\) whenever \(\mathrm{St}(U,\mathscr U^\star) = \mathrm{St}(V,\mathscr U^\star)\).
    We then define \(\overleftarrow{T}_{\mathrm{I}} : \mathcal C_{\mathcal E}(X) \to \mathrm{Gal}(\mathrm{id},\mathcal C_{\mathcal E}(X))\) by
    \(\overleftarrow{T}_{\mathrm{I}}(\mathscr U) = \mathrm{Cons}(\mathscr U^\star,\mathscr U^\star)\) and
    \[
     \overrightarrow{T}_{\mathrm{II}}(\mathrm{St}(U,\mathscr U^\star), \mathscr U) = \alpha_{\mathscr U}(U).
    \]
    Note that \(\overrightarrow{T}_{\mathrm{II}}\) is well-defined by the conditions on \(\alpha_{\mathscr U}\).
    For \(W \in \overleftarrow{T}_{\mathrm{I}}(\mathscr U)\), there is \(U \in \mathscr U^\star\) so that \(W = \mathrm{St}(U,\mathscr U^\star)\).
    Then observe that \[\overrightarrow{T}_{\mathrm{II}}(W, \mathscr U) = \alpha_{\mathscr U}(U) \in \mathscr U.\]
    To finish the proof, suppose we have
    \[\mathcal F_n \in \left[ \overleftarrow{T}_{\mathrm{I}}(\mathscr U_n) \right]^{<\omega}\]
    so that \(\bigcup_{n\in\omega} \mathcal F_n\) forms a cover of \(X\).
    By the fact that applying \(\overrightarrow{T}_{\mathrm{II}}\) produces a collection of open sets which is refined by
    \(\bigcup_{n\in\omega} \mathcal F_n\), we see that Corollary \ref{cor:Translation} applies.
\end{proof}
The following result can be seen as a generalization of \cite[Thm. 3.4 (1)]{AlamChandra} beyond the paracompact case and
to the other strategy types discussed in this paper.
\begin{theorem} \label{thm:UniformKMenger}
    For any uniform space \((X,\mathcal E)\),
    \[\mathsf {SG}_{X, \mathrm{fin}}^*(\mathcal C_{\mathcal E}(X), \mathcal O_X)
    \leftrightarrows \mathsf {SG}_{\mathbb K, 1}^*(\mathcal C_{\mathcal E}(X), \mathcal O_X)
    \leftrightarrows \mathsf {SG}_{\mathbb K, \mathrm{fin}}^*(\mathcal C_{\mathcal E}(X), \mathcal O_X).\]
\end{theorem}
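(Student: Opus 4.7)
The plan is to reduce the three-game equivalence to a single nontrivial translation, with earlier results handling the easy directions. Since $K(X)$ is an ideal of closed subsets of the Hausdorff space $X$ and $\mathcal O_X$ is closed under taking open supercovers, Proposition \ref{prop:starSingleVsFinite} immediately yields
\[
\mathsf{SG}^*_{\mathbb K, 1}(\mathcal C_{\mathcal E}(X), \mathcal O_X) \rightleftarrows \mathsf{SG}^*_{\mathbb K, \mathrm{fin}}(\mathcal C_{\mathcal E}(X), \mathcal O_X);
\]
and Corollary \ref{cor:Monotonicity}, applied to the inclusion $[X]^1 \subseteq K(X)$ of singletons into compact sets, gives the easy direction
\[
\mathsf{SG}^*_{X, \mathrm{fin}}(\mathcal C_{\mathcal E}(X), \mathcal O_X) \leq^+_{\mathrm{II}} \mathsf{SG}^*_{\mathbb K, \mathrm{fin}}(\mathcal C_{\mathcal E}(X), \mathcal O_X).
\]
Everything then reduces to proving the reverse inequality $\mathsf{SG}^*_{\mathbb K, \mathrm{fin}} \leq^+_{\mathrm{II}} \mathsf{SG}^*_{X, \mathrm{fin}}$.

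For this main step I would apply Theorem \ref{thm:GeneralTranslation} using star-refinements of uniform covers. Proposition \ref{prop:UniformCovers} supplies, for each $\mathscr V \in \mathcal C_{\mathcal E}(X)$, a uniform star-refinement $\mathscr V^\star$. Using the Axiom of Choice in the style of Remark \ref{rmk:Choice}, these selections can be made consistent so that $\overleftarrow{T}_{\mathrm{I}}(\mathrm{Cons}(X,\mathscr V)) = \mathrm{Cons}(\mathbb K, \mathscr V^\star)$ is a well-defined function of its argument. To define $\overrightarrow{T}_{\mathrm{II}}$, given a finite $\mathcal F \subseteq \mathrm{Cons}(\mathbb K, \mathscr V^\star)$, for each $\mathrm{St}(K, \mathscr V^\star) \in \mathcal F$ I extract a finite subcover $U_1, \ldots, U_m \in \mathscr V^\star$ of the compact set $K$, choose $V_i \in \mathscr V$ with $\mathrm{St}(U_i, \mathscr V^\star) \subseteq V_i$, pick any $x_i \in V_i$, and take $\overrightarrow{T}_{\mathrm{II}}(\mathcal F, \mathrm{Cons}(X,\mathscr V))$ to be the resulting finite collection of point-stars $\mathrm{St}(x_i, \mathscr V)$ accumulated over all members of $\mathcal F$.

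The key containment to verify is $\mathrm{St}(K, \mathscr V^\star) \subseteq \bigcup_i \mathrm{St}(x_i, \mathscr V)$: for any $y$ on the left there is $U \in \mathscr V^\star$ containing $y$ and meeting $K$, which in turn meets some $U_i$, so by the star-refinement property $U \subseteq \mathrm{St}(U_i, \mathscr V^\star) \subseteq V_i \subseteq \mathrm{St}(x_i, \mathscr V)$. This containment propagates round-by-round, so whenever Two's finite plays of compact-set stars cover $X$ the translated point-stars do as well, which is exactly the hypothesis of Theorem \ref{thm:GeneralTranslation}.

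The hard part is this translation, where the bookkeeping must combine compactness of $K$ (to replace a single star of a compact set by finitely many stars of points) with the defining property of a star-refinement (to dominate stars in $\mathscr V^\star$ by stars in $\mathscr V$). Both ingredients are essential; without the uniformity, a star $\mathrm{St}(K, \mathscr V^\star)$ could not in general be absorbed by any finite union of point-stars taken from an arbitrary open cover.
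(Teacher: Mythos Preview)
Your proposal is correct and follows the same overall decomposition as the paper: both invoke Proposition \ref{prop:starSingleVsFinite} and Corollary \ref{cor:Monotonicity} for the easy directions, and both prove the remaining inequality $\mathsf{SG}^*_{\mathbb K,\mathrm{fin}} \leq^+_{\mathrm{II}} \mathsf{SG}^*_{X,\mathrm{fin}}$ via Theorem \ref{thm:GeneralTranslation} by refining the given uniform cover, finitely covering each compact set, and chaining stars.

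The only difference is the implementation of the refinement. The paper works on the entourage side: it selects $\gamma(\mathscr U)\in\mathcal E$ with $\{(\gamma(\mathscr U)\circ\gamma(\mathscr U)\circ\gamma(\mathscr U))[x]:x\in X\}$ refining $\mathscr U$, plays the cover $\{\gamma(\mathscr U)[x]:x\in X\}$ in the compact-star game, and then verifies the containment by tracing a three-step chain through the entourage composition. You instead work on the cover side via Proposition \ref{prop:UniformCovers}, taking a single star-refinement $\mathscr V^\star$ of $\mathscr V$ and verifying the containment $\mathrm{St}(K,\mathscr V^\star)\subseteq\bigcup_i\mathrm{St}(x_i,\mathscr V)$ directly from the star-refinement property. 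Your route is marginally cleaner (one star-refinement rather than a triple entourage composition) and you are more explicit about the well-definedness of $\overleftarrow{T}_{\mathrm{I}}$ on constellations; the paper's route keeps everything phrased in terms of entourages. These are equivalent formulations of the same idea, and neither buys anything substantive over the other.
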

\begin{proof}
    By Proposition \ref{prop:starSingleVsFinite},
    \[\mathsf {SG}_{\mathbb K, 1}^*(\mathcal C_{\mathcal E}(X), \mathcal O_X)
    \leftrightarrows \mathsf {SG}_{\mathbb K, \mathrm{fin}}^*(\mathcal C_{\mathcal E}(X), \mathcal O_X)\]
    and, by Corollary \ref{cor:Monotonicity},
    \[\mathsf {SG}_{X, \mathrm{fin}}^*(\mathcal C_{\mathcal E}(X), \mathcal O_X)
    \leq_{\mathrm{II}}^+ \mathsf {SG}_{\mathbb K,\mathrm{fin}}^*(\mathcal C_{\mathcal E}(X), \mathcal O_X).\]
    To finish the proof, we show that
    \[\mathsf {SG}_{\mathbb K, \mathrm{fin}}^*(\mathcal C_{\mathcal E}(X), \mathcal O_X)
    \leq_{\mathrm{II}}^+ \mathsf {SG}_{X, \mathrm{fin}}^*(\mathcal C_{\mathcal E}(X), \mathcal O_X).\]
    For \(\mathscr U \in \mathcal C_{\mathcal E}(X)\), let \(\gamma(\mathscr U) \in \mathcal E\) be so that
    be so that
    \[\left\{ \left(\gamma(\mathscr U) \circ \gamma(\mathscr U)
    \circ \gamma(\mathscr U)\right)[x] : x \in X \right\}\]
    refines \(\mathscr U\).
    Then define \(\overleftarrow{T}_{\mathrm{I}}(\mathscr U) = \{ \gamma(\mathscr U)[x] : x \in X \}\).

    We can then define \(\eta(K,\mathscr U)\) for \(K \in K(X)\) and \(\mathscr U \in \mathcal C_{\mathcal E}(X)\)
    to be a finite subset of \(K\)
    so that \[K \subseteq \bigcup \left\{ \gamma(\mathscr U)[x] : x \in \eta(K, \mathscr U) \right\}.\]
    Then, for \(\mathcal F \in [K(X)]^{<\omega}\) and \(\mathscr V , \mathscr U \in \mathcal C_{\mathcal E}(X)\), we define
    \[\overrightarrow{T}_{\mathrm{II}}\left( \left\{ \mathrm{St}(K, \mathscr V) : K \in \mathcal F \right\}, \mathscr U \right)
    =\bigcup_{K \in \mathcal F} \left\{ \mathrm{St}(x, \mathscr U) : x \in \eta(K, \mathscr U) \right\}.\]

    To finish this application of Theorem \ref{thm:GeneralTranslation}, we just need to show that \(\overrightarrow{T}_{\mathrm{II}}\)
    takes winning plays to winning plays.
    So let \(\langle \mathscr U_n : n \in \omega \rangle \in \mathcal C_{\mathcal E}(X)^\omega\),
    \(\mathscr V_n = \overleftarrow{T}_{\mathrm{I}}(\mathscr U_n)\), and
    suppose we have \(\mathcal F_n \in [K(X)]^{<\omega}\) so that
    \[\bigcup_{n\in\omega} \left\{ \mathrm{St}(K,\mathscr V_n) : K \in \mathcal F_n \right\} \in \mathcal O_X.\]
    Let \(x \in X\) be arbitrary and choose \(n \in \omega\) and \(K \in \mathcal F_n\) so that \(x \in \mathrm{St}(K,\mathscr V_n)\).
    Then there is some \(w_0 \in X\) so that \(x \in \gamma(\mathscr U_n)[w_0]\) and \(K \cap \gamma(\mathscr U_n)[w_0] \neq\emptyset\).
    Let \(w_1 \in K \cap \gamma(\mathscr U_n)[w_0]\) and \(y \in \mathcal F_n\) be so that \(w_1 \in \gamma(\mathscr U_n)[y]\).
    Then \(\langle x,w_0 \rangle, \langle w_0,w_1\rangle , \langle w_1,y \rangle \in \gamma(\mathscr U_n)\) which implies that
    \(\langle x, y \rangle \in \gamma(\mathscr U_n) \circ \gamma(\mathscr U_n) \circ \gamma(\mathscr U_n)\).
    By the assumption on \(\gamma(\mathscr U_n)\), there is some \(U \in \mathscr U_n\) so that
    \((\gamma(\mathscr U_n) \circ \gamma(\mathscr U_n) \circ \gamma(\mathscr U_n))[x] \subseteq U\)
    and so we see that \(y \in U\).
    As \(x \in U\) as well, we see that \(x \in \mathrm{St}(y , \mathscr U_n)\).
    This finishes the proof.
\end{proof}

Theorem \ref{thm:UniformKMenger} fails for \(\mathsf {SG}_{X, 1}^*(\mathcal C_{\mathcal E}(X), \mathcal O_X)\),
as illustrated by the following example.
\begin{example} \label{ex:UniformReals}
    Consider \(X = \mathbb R\) with the uniformity \(\mathcal E\) generated by the standard metric;
    that is, the uniformity generated by
    \[\{\{ (x,y) \in \mathbb R^2 : |x-y| < \varepsilon\} : \varepsilon > 0\}.\]
    Then Two has a winning Markov strategy in \(\mathsf {SG}_{\mathbb K, 1}^*(\mathcal C_{\mathcal E}(X), \mathcal O_X)\)
    but One has a predetermined winning strategy in \(\mathsf {SG}_{X, 1}^*(\mathcal C_{\mathcal E}(X), \mathcal O_X)\).
    To see this, note that a winning Markov strategy for Two in \(\mathsf {SG}_{\mathbb K, 1}^*(\mathcal C_{\mathcal E}(X), \mathcal O_X)\)
    follows from \(\sigma\)-compactness of \(\mathbb R\), as illustrated in Example \ref{ex:starSingleFinite}.
    A predetermined winning strategy for One in \(\mathsf {SG}_{X, 1}^*(\mathcal C_{\mathcal E}(X), \mathcal O_X)\)
    is \[\sigma(n) := \left\{B\left(x,\frac{1}{2^{n+2}}\right) : x \in \mathbb R \right\}.\]
    Then, as \(\mathrm{St}(x,\sigma(n)) \subseteq B\left(x,\frac{1}{2^n}\right)\), there is no sequence
    \(\langle x_n : n \in \omega \rangle\) or \(\mathbb R\) so that
    \(\{\mathrm{St}(x_n,\sigma(n)) : n \in \omega\}\) covers \(\mathbb R\).
\end{example}

\begin{definition}
    Given a space \(X\), the \emph{universal} (or \emph{fine}) \emph{uniformity} on \(X\) is the finest uniformity
    on \(X\) compatible with its topology (see \cite[Exercise 8.1.C]{Engelking} and \cite[Thm. 1.20]{IsbellUniformSpaces}).
\end{definition}

The following, along with Theorem \ref{thm:MainUniformTheorem}, extends \cite[Thm. 3.2 (1), (2)]{AlamChandra}
and the paracompact case of \cite[Thm. 3.3 (1)]{AlamChandra} to more strategy types.
\begin{theorem} \label{thm:MengerParacompact}
    If \(X\) is paracompact and \(\mathfrak U\) is the universal uniformity on \(X\), then
    \[\mathsf{G}_\square(\mathcal{C}_{\mathfrak U}(X), \mathcal O_X) \leftrightarrows \mathsf{G}_\square(\mathcal O_X,\mathcal O_X).\]
\end{theorem}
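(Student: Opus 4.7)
The plan is to derive both inequalities as direct applications of Proposition \ref{prop:Monotonicity}, with all of the real content concentrated in a single classical topological input about paracompact Hausdorff spaces.

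For $\mathsf{G}_\square(\mathcal O_X,\mathcal O_X) \leq_{\mathrm{II}}^+ \mathsf{G}_\square(\mathcal C_{\mathfrak U}(X),\mathcal O_X)$, I would invoke Proposition \ref{prop:Monotonicity} with $\mathcal A = \mathcal O_X$, $\mathcal B = \mathcal C_{\mathfrak U}(X)$, $\mathcal C = \mathcal D = \mathcal O_X$. Every open uniform cover is in particular a non-trivial open cover (using the standing assumption that $X$ is non-compact to rule out $X \in \mathscr U$ in the relevant covers), so each $\mathscr F \in \mathcal C_{\mathfrak U}(X)$ is refined by itself inside $\mathcal O_X$; the second hypothesis of Proposition \ref{prop:Monotonicity}, that an open cover refined by an open cover is again an open cover, is immediate.

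For the reverse inequality $\mathsf{G}_\square(\mathcal C_{\mathfrak U}(X),\mathcal O_X) \leq_{\mathrm{II}}^+ \mathsf{G}_\square(\mathcal O_X,\mathcal O_X)$, I would swap the roles to $\mathcal A = \mathcal C_{\mathfrak U}(X)$, $\mathcal B = \mathcal O_X$. Now the first hypothesis of Proposition \ref{prop:Monotonicity} demands that every open cover $\mathscr F \in \mathcal O_X$ admit a refinement in $\mathcal C_{\mathfrak U}(X)$. This is where paracompactness enters through a classical theorem (see \cite{Engelking,IsbellUniformSpaces,Kelley}): in a paracompact Hausdorff space, every open cover is \emph{normal}, meaning it admits an infinite sequence of open star-refinements, and the normal open covers are precisely the open uniform covers with respect to the universal uniformity $\mathfrak U$. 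Consequently, $\mathscr F$ itself already lies in $\mathcal C_{\mathfrak U}(X)$ and refines itself, so $\mathscr G = \mathscr F$ serves as the required refinement.

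The main obstacle is nothing more than quoting this classical identification of normal open covers with universal-uniform covers in the paracompact Hausdorff setting; once that is in hand, both applications of Proposition \ref{prop:Monotonicity} are formal and deliver the $\leftrightarrows$-equivalence in a single stroke, uniformly for $\square \in \{1,\mathrm{fin}\}$ and with the Bar-Ilan strength included via the $+$-variant.
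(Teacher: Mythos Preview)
Your proof is correct and rests on the same classical fact as the paper's: in a paracompact Hausdorff space every open cover is uniform for the universal uniformity. The paper is slightly more direct, however: invoking Proposition \ref{prop:EngelkingUniform} it concludes \(\mathcal O_X = \mathcal C_{\mathfrak U}(X)\) outright, so the two games are literally identical and no detour through Proposition \ref{prop:Monotonicity} is needed.
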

\begin{proof}
    By \cite[Thm 5.1.12]{Engelking}, every open cover of a paracompact space has an open star-refinement.
    Then, by Proposition \ref{prop:EngelkingUniform}, \(\mathcal O_X\) generates a uniformity on \(X\),
    the universal uniformity \(\mathfrak U\), and \(\mathcal O_X = \mathcal C_{\mathfrak U}(X)\).
\end{proof}

Using the well-known results of Hurewicz and Pawlikowski (see \cite{Pawlikowski}) along with Theorem \ref{thm:MengerParacompact}, we obtain the following.

\begin{corollary} \label{cor:Pawlikowski}
    If \(X\) is paracompact and \(\mathfrak U\) is the universal uniformity on \(X\), then
    \[\mathrm{I} \underset{\mathrm{pre}}{\uparrow} \mathsf{G}_\square(\mathcal{C}_{\mathfrak U}(X), \mathcal O_X)
    \iff \mathrm{I} \uparrow \mathsf{G}_\square(\mathcal{C}_{\mathfrak U}(X), \mathcal O_X).\]
\end{corollary}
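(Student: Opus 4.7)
The plan is to combine Theorem \ref{thm:MengerParacompact} with the classical theorems of Hurewicz (for \(\square = \mathrm{fin}\)) and Pawlikowski (for \(\square = 1\)). Recall that these classical theorems state that \(\mathsf{S}_\square(\mathcal O_X, \mathcal O_X)\) is equivalent to \(\mathrm{I} \not\uparrow \mathsf{G}_\square(\mathcal O_X, \mathcal O_X)\). Combined with Remark \ref{remark:LindelofAndSelection}, which identifies \(\mathsf{S}_\square(\mathcal O_X, \mathcal O_X)\) with \(\mathrm{I} \underset{\mathrm{pre}}{\not\uparrow} \mathsf{G}_\square(\mathcal O_X, \mathcal O_X)\), this amounts to the equivalence
\[
\mathrm{I} \underset{\mathrm{pre}}{\not\uparrow} \mathsf{G}_\square(\mathcal O_X, \mathcal O_X)
\iff \mathrm{I} \not\uparrow \mathsf{G}_\square(\mathcal O_X, \mathcal O_X).
\]

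First, I would invoke Theorem \ref{thm:MengerParacompact} to obtain \(\mathsf{G}_\square(\mathcal{C}_{\mathfrak U}(X), \mathcal O_X) \leftrightarrows \mathsf{G}_\square(\mathcal O_X, \mathcal O_X)\). By the definition of game equivalence \(\leftrightarrows\), this transfers both the non-existence of a winning strategy for One and the non-existence of a winning predetermined strategy for One between the two games. Thus
\[
\mathrm{I} \underset{\mathrm{pre}}{\not\uparrow} \mathsf{G}_\square(\mathcal{C}_{\mathfrak U}(X), \mathcal O_X)
\iff \mathrm{I} \underset{\mathrm{pre}}{\not\uparrow} \mathsf{G}_\square(\mathcal O_X, \mathcal O_X)
\]
and
\[
\mathrm{I} \not\uparrow \mathsf{G}_\square(\mathcal{C}_{\mathfrak U}(X), \mathcal O_X)
\iff \mathrm{I} \not\uparrow \mathsf{G}_\square(\mathcal O_X, \mathcal O_X).
\]

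Stringing these three equivalences together gives
\[
\mathrm{I} \underset{\mathrm{pre}}{\not\uparrow} \mathsf{G}_\square(\mathcal{C}_{\mathfrak U}(X), \mathcal O_X)
\iff \mathrm{I} \not\uparrow \mathsf{G}_\square(\mathcal{C}_{\mathfrak U}(X), \mathcal O_X),
\]
and negating both sides yields the corollary. There is no real obstacle here; the step to highlight is simply citing the two classical theorems correctly (Hurewicz for finite-selection and Pawlikowski for single-selection) and noting that the equivalence \(\leftrightarrows\) already bundles the four implications needed to transfer both strategy types back and forth between the uniform-cover game and the ordinary open-cover game.
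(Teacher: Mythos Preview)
Your proposal is correct and follows exactly the route the paper indicates: the paper simply states that the corollary follows from the Hurewicz and Pawlikowski theorems together with Theorem \ref{thm:MengerParacompact}, and you have spelled out precisely that derivation.
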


In general, Theorem \ref{thm:MengerParacompact} cannot be generalized beyond paracompact spaces,
as the following example illustrates.
(Compare to \cite[Note 4]{KocinacUniform}.)
\begin{example}
    Let \(\mathcal E\) be any uniformity on \(\omega_1\) compatible with its order topology.
    Note that \(\omega_1\) is not Menger as it's not even Lindel{\"{o}}f.
    However, \(\omega_1\) is uniform-Rothberger (hence, uniform-Menger) with respect to \(\mathcal E\).

    To establish that \(\omega_1\) is uniform-Rothberger, we first show that self compositions of open entourages on
    \(\omega_1\) create open sets that contain tails.
    So let \(U \subseteq \omega_1^2\) be open so that \(\Delta \subseteq U\).
    For each limit ordinal \(\lambda < \omega_1\), let \(b_\lambda < \lambda\) be so that
    \((b_\lambda , \lambda]^2 \subseteq U\).
    By Fodor's Pressing Down Lemma, there is a stationary, thus cofinal, set \(S \subseteq \omega_1\) and some
    \(\beta < \omega_1\) so that \(b_\lambda = \beta\) for all \(\lambda \in S\).

    We now show that \([ \beta+1 , \omega_1) \subseteq (U\circ U)[\beta+1]\).
    Indeed, let \(\gamma > \beta\) and \(\lambda \in S\) be so that \(\gamma < \lambda\).
    Notice that \(b_\lambda = \beta < \gamma < \lambda\) since \(\lambda \in S\);
    so we obtain that \(( \lambda , \gamma ) \in U\) and that \((\beta+1, \lambda)\in U\).
    Hence, \((\beta + 1 , \gamma) \in U \circ U\), which implies that \(\gamma \in (U \circ U)[\beta+1]\).

    Now, this means that, for any uniformity \(\mathcal E\) on \(\omega_1\)
    compatible with its order topology, every uniform cover of \(\omega_1\) contains a co-countable element;
    so \(\omega_1\) is uniform-Rothberger.
\end{example}

\section{Open Questions} \label{sec:Questions}

\begin{question}
    Are there any other interesting selection principles of the form \[\mathsf S_\square(\mathrm{Gal}(f, \mathcal A), \mathcal B)\]
    where \(\square \in \{1,\mathrm{fin}\}\), \(\mathcal A\) and \(\mathcal B\) are collections, and \(f\) is not either constant
    or the identity?
\end{question}
\begin{question} \label{question:PRAnalog}
For any space \(X\), is there an analog to Theorem \ref{thm:FirstBound} for \(\mathrm{PR}_{\mathbb K}(X)\)?
That is, is it the case that
\[\mathsf G^\ast_\square(\mathcal O_{\mathrm{PR}_{\mathbb K}(X)},\mathcal O_{\mathrm{PR}_{\mathbb K}(X)}) \leq_{\mathrm{II}}
\mathsf G_\square(\mathcal K_X, \mathcal K_X)\text{?}\]
\end{question}
\begin{question}
    Can Corollary \ref{cor:Pawlikowski} be generalized beyond the paracompact setting, despite the fact that Theorem \ref{thm:MengerParacompact}
    cannot?
\end{question}

\providecommand{\bysame}{\leavevmode\hbox to3em{\hrulefill}\thinspace}
\providecommand{\MR}{\relax\ifhmode\unskip\space\fi MR }
% \MRhref is called by the amsart/book/proc definition of \MR.
\providecommand{\MRhref}[2]{%
  \href{http://www.ams.org/mathscinet-getitem?mr=#1}{#2}
}
\providecommand{\href}[2]{#2}

\end{document}